\newcommand{\RR}{\mathbb{R}}
\newcommand{\CC}{\mathbb{C}}
\newcommand{\QQ}{\mathbb{Q}}
\newcommand{\ZZ}{\mathbb{Z}}
\newcommand{\bc}{\mathbf{c}}
\newcommand{\wt}{\widetilde}
\newcommand{\vv}{\mathcal{V}}
\newcommand{\mcT}{\mathcal{T}}
\newcommand{\mcH}{\mathcal{H}}
\newcommand{\mcQ}{\mathcal{Q}}
\newcommand{\mcV}{\mathcal{V}}
\DeclareMathOperator{\conv}{conv}
\DeclareMathOperator{\MV}{MV}
\DeclareMathOperator{\vol}{vol}
\DeclareMathOperator{\supp}{supp}
\DeclareMathOperator{\newt}{Newt}
\newtheorem{thm}{Theorem}[section]
\newtheorem{lem}[thm]{Lemma}
\newtheorem{conj}[thm]{Conjecture}
\newtheorem{prop}[thm]{Proposition}
\newtheorem{cor}[thm]{Corollary}
\theoremstyle{definition}
\newtheorem{rmk}[thm]{Remark}
\newenvironment{exm}
  {\pushQED{\qed}\example}
  {\popQED\endexample}
\tikzset{
  text style/.style={
    sloped, 
    text=black
  }}
\tikzset{join/.code=\tikzset{after node path={%
\ifx\tikzchainprevious\pgfutil@empty\else(\tikzchainprevious)%
edge[every join]#1(\tikzchaincurrent)\fi}}}
\tikzset{>=stealth',every on chain/.append style={join},
         every join/.style={->}}
\tikzstyle{labeled}=[execute at begin node=$\scriptstyle,
\begin{document}
\title{The steady-state degree and mixed volume of a chemical reaction network}
\newcommand*\samethanks[1][\value{footnote}]{\footnotemark[#1]}
\author{
Elizabeth Gross \\ University of Hawai'i at Manoa \and
Cvetelina Hill \\ Georgia Institute of Technology }
\maketitle

\begin{abstract}
  \noindent
The steady-state degree of a chemical reaction network is the number
of complex steady-states, which is a measure of the algebraic complexity of solving the steady-state system. In general, the
steady-state degree may be difficult to compute. Here, we give an
upper bound to the steady-state degree of a reaction network by
utilizing the underlying polyhedral geometry associated with the
corresponding polynomial system. We focus on three case studies of
infinite families of networks, each generated by joining
smaller networks to create larger ones. For each family, we give a
formula for the steady-state degree and
the mixed volume of the corresponding polynomial system. 
\end{abstract}

  
  

\section{Introduction}
\label{sec:intro}

Chemical reaction networks (CRNs), under the assumption of mass-action
kinetics, are deterministic polynomial systems commonly used in
systems biology to model mechanisms such as inter- and intracellular
signaling. In this paper, we study the Newton polytopes of the steady-state system of several reaction networks. The geometry of these polytopes can inform us about the steady-state degree of the network, and consequently, the algebraic complexity of exploring regions of multistationarity. 

One way to evaluate whether a given reaction network is an
appropriate model for a biological process is to consider its capacity
for multiple positive real steady-states.  If a reaction network has
this capacity, we call the network \emph{multistationary}. Multistationarity for reaction networks with mass-action kinetics has been extensively studied (see \cite{JS2015}) with algebraic methods playing a key role \cite{Dic2016}.  

Once multistationarity is established, then bounds on the number of real positive steady-states \cite{BDG2018} \cite{FH2018} \cite{MFRCSD2016} \cite{OSTT2019}  and the regions of multistationarity can be explored \cite{CFMW2017} \cite{CIK2018} \cite{GBD2018} \cite{GHRS16}.  One method to explore regions of multistationarity, which is used in \cite{GHRS16} and \cite{CIK2018}, is to sample parameters in a systematic way and repeatedly solve the \emph{steady-state system}.  The steady-state system of a reaction network is the parameterized polynomial system formed by the steady-state equations and the conservation equations.  Solving steady-state systems can be done numerically using solvers based on polynomial homotopy continuation such as \emph{Bertini} \cite{BHSW2013}, \emph{PHCpack} \cite{V99}, and \emph{HOM-4-PS2} \cite{LTC2008}. Such solvers will return all complex solutions, and so a final step requires filtering for real, positive solutions.  We call the number of complex steady-states for generic rate constants and initial conditions the \emph{steady-state degree} of a chemical reaction network.  The steady-state degree is not only a bound on the number of real, positive steady-states, but is also a measure of the algebraic complexity of solving the steady-state system for a given reaction network. The steady-state degree is similar to the maximum likelihood degree studied in algebraic statistics \cite{CHKS2006} and the Euclidean distance degree studied in optimization \cite{DHOST2016}; the former is a measure of the algebraic complexity of maximum likelihood estimation and the latter is a measure of the algebraic complexity of minimizing the distance between a point and a variety. From the viewpoint of using numerical algebraic geometry to explore regions of multistationarity, the steady-state degree is the number of paths that need to be tracked when using a parameter-homotopy to solve the steady-state system 
and can serve as a stopping criterion for monodromy-based solvers, such as the one described in \cite{DHJLLS2018}. 

Using the steady-state degree as motivation, in this paper we study the polyhedral geometry associated
to the steady-state and conservation equations. In many cases, particularly when there are many variables involved,
the steady-state degree of a family of networks can be difficult to establish. However, we
can provide an upper bound by the B\'ezout
bound and, in the absence of boundary solutions, the mixed volume of the polynomial system arising from the 
chemical reaction network. 
As an example, the mixed volume was used to bound the steady-state degree of a model of ERK regulation in \cite{OSTT2019}. In this paper, we explore the mixed volumes of reaction networks further, giving formulas for three families of networks.  In particular, we study the combinatorics of the Newton polytopes and their Minkowski sums that arise for three infinite families of networks.

The three infinite families of chemical reaction networks that we study are constructed by successively building on smaller networks to create larger ones. The base network for each family is: the cell death model from \cite{H-H},
the Edelstein network \cite{M-P-V}, and the one-site phosphorylation
cycle (see for example, motif (a) in \cite{F-W}). For each network, we compute the mixed volume and steady-state
degree of the networks using various techniques such as explicit
computation, reducing to semi-mixed and unmixed volume computation
\cite{C}, and in the case of a randomized system, constructing a
unimodular triangulation.

\begin{table}[h]
\centering
\begin{tabular}{| c | c | c | c |}
\hline
{\bf CRN family}  & {\bf B\'ezout bound} & {\bf Mixed volume} & {\bf Steady-state degree}\\
\hline 
Cluster model & $n$ & $n-2$ & $n$ (includes two\\
for cell death & & & boundary sols)\\
\hline 
Edelstein & $2^{n+1}$ & $3$ & $3$ \\
& & & \\
\hline 
Multisite distributive & $2^{3n+1}$ & $\frac{(n+1)(n+4)}{2} - 1$ & Conjecture: $2n+1$ \\
phosphorylation & & & \\
\hline 
\end{tabular}
\caption{Summary of theorems, propositions, and conjectures on the families of chemical reaction networks studied in this paper. 
See Theorems~\ref{thm:EdelMV},~\ref{thm:EdelSSD}, and~\ref{thm:one-siteMV}; 
Propositions ~\ref{prop:cellDeathBezout}, ~\ref{prop:cellDeathMV}, ~\ref{prop:cellDeathSSD}, ~\ref{prop:EdelBezout}, and \ref{prop:BezoutPC}; 
and Conjecture~\ref{conj:ssdPC}.
}
\label{tab:summary}
\end{table}

As shown in Table \ref{tab:summary} each of these examples illustrate a different relationship between the steady-state degree and the mixed volume of the the steady-state system.  For the first family, based on a cluster model for cell death, we see that that the steady-state degree is actually slightly larger than the mixed volume, due to the presence of boundary steady-states. In the second family, based on the Edelstein model, the mixed volume and steady-state degree agree.  In the third family, multisite distributive phosphorylation, we see that the mixed volume is quadratic in the number of sites, while the steady-state degree is linear in the number of sites.

The most significant of these three case studies is the exploration of the multisite distributive phosphorylation system in Section \ref{subsec:one-site}.  The $n$-site distributive phosphorylation system can be obtained by successively gluing together $n$ copies of the one-site phosphorylation cycle \cite{G-H-M-S}.  The regions of multistationarity of this network have been been recently investigated (e.g. see \cite{BDG2018}, \cite{CIK2018}, \cite{HFC2013}) in the field of chemical reaction network theory. In addition, the number of real positive solutions has been well-studied.  For example, the authors of \cite{Wang2008} show that the number of real positive solutions is bounded above by $2n-1$ and below by $n+1$ when $n$ is even and $n$ when $n$ is odd. Furthermore, the authors of \cite{FHC2014} show that the $2n-1$ bound can be achieved when $n=3$ and $n=4$, while the authors of \cite{GRMD2019} describe parameter regions where the steady-state system has $n+1$ real positive solutions when $n$ is even and $n$ when $n$ is odd.  In Section \ref{subsec:one-site}, we give the mixed volume of the {\em randomized} steady-state system of $n$-site distributive phosphorylation.  The randomized system is a square system obtained from the overdetermined steady-state system by taking random combinations of the polynomials. Determining the mixed volume requires computing the normalized volume of a $3n+3$ dimensional $0-1$ polytope with $5n+4$ vertices and $3n +7$ facets.  At the end of Section \ref{subsec:one-site} we show that this polytope of interest is the matching polytope of a graph.

The paper is organized as follows. In Section \ref{sec:back}, we give the necessary background, definitions,
and motivation.  In Section \ref{sec:res}, we systematically explore each of the three families of networks. 


\section{Background \& motivation}
\label{sec:back}

 A {\em chemical reaction network} $\mathcal N = (\mathcal{S},
 \mathcal{C},\mathcal{R})$ is a triple where $\mathcal{S} =
 \{A_1,A_2,\ldots,A_n\}$ is a set of $n$ chemical {\em species},  $\mathcal{C} = \{y_1,y_2,\ldots,y_p\}$ is a set of $p$ {\em complexes} (finite nonnegative-integer combinations of the species), and  $\mathcal{R} = \{y_i\rightarrow y_j~|~y_i, y_j \in\mathcal{C} \}$ is a set of $r$ {\em reactions}.
  
Each complex in $\mathcal{C}$ can be written in the form $y_{i1}A_1+y_{i2}A_2+\cdots+y_{in}A_n$ where $y_{ij} \in \mathbb Z_{\geq 0}$, and thus, we will view the elements of $\mathcal{C}$ as vectors in $\mathbb Z_{\geq 0}^n$, i.e. $y_i = (y_{i1}, y_{i2}, \ldots, y_{in})$  Additionally, to each complex
of the chemical reaction network, we associate a monomial
$x^{y_i}=x_{A_1}^{y_{i1}}x_{A_2}^{y_{i2}}\cdots x_{A_n}^{y_{in}}$
where $x_{A_i}=x_{A_i}(t)$ represents the concentration for species
$A_i$ with respect to time. For example, for the reaction
$A+B\rightarrow4B+C,$ the monomials corresponding to the
reactant $A+B$ and the product $4B+C$ are $x_Ax_B$ and
$x_B^4x_C,$ respectively, with exponent vectors
$y_1 = (1,1,0)$ and $y_2 = (4,0,1).$

Let $y_i\rightarrow y_j$ be the reaction from the $i$-th
to the $j$-th complex. To each reaction we associate
a {\em reaction vector} $y_j-y_i$ that gives the net change
in each species due to the reaction. Moreover, each reaction
has an associated positive reaction rate constant $k_{ij}.$
Given a chemical reaction network
$(\mathcal{S},\mathcal{C},\mathcal{R})$ and a choice of
$k_{ij}\in\RR_{>0}^r$  the system of polynomial ordinary differential equations under 
the assumption of mass-action kinetics is
\begin{equation}
  \label{eq:ODE}
  \dfrac{dx}{dt} = \sum_{y_i\rightarrow y_j\in\mathcal{R}}
  k_{ij}x^{y_i}(y_j-y_i) = : f(x),~~~x\in\RR^n. 
\end{equation}
Setting the left-hand side of the ODEs above equal to zero gives us a
set of polynomial equations that we call the \emph{steady-state equations}.

The stoichiometric subspace associated with the chemical
reaction network $\mathcal N=( \mathcal S, \mathcal C, \mathcal R)$ is a vector subspace of $\RR^n$ spanned
by the reaction vectors $y_j-y_i,$ denoted by 
\begin{equation}
  \label{eq:stoich}
  S_{\mathcal N} := \RR\{y_j-y_i ~|~ y_i\rightarrow y_j\in\mathcal{R} \}.
\end{equation}
Given initial conditions $\mathbf{c} \in \RR^n$, the stoichiometric compatibility class is the affine space $S_{\mathcal N} + \mathbf{c} $, and the \emph{conservation equations} of $\mathcal N$ are the set of linear equations defining $S_{\mathcal N} + \mathbf{c} $. 

In this paper, we are concerned with the parameterized system of
equations formed by the steady-state and conservation equations, which we call the \emph{steady-state system}, we view the polynomials of the steady-state system as polynomials in the ring $\mathbb Q({\mathbf k}, {\mathbf c} )[x_1, \ldots x_n]$. When
the solution set of this polynomial system is zero-dimensional for generic parameters
$\mathbf{k}$ and $\mathbf{c}$, we
define the number of complex solutions to the system for generic parameters
as the \emph{steady-state degree} of $\mathcal N,$ where we
distinguish \emph{boundary} steady-states as complex solutions
$x\in\CC^n$ such that $x_i=0,$ for one or more $i=1,\ldots,n.$


The steady-state degree can be computed symbolically (using Gr\"{o}bner bases) or numerically (using polynomial homotopy continuation), however,  both these methods become computationally expensive when a large number of species are involved. In such cases we would like to know an
upper bound on the degree. Two such bounds are the B\'ezout 
bound and the Bernhtein-Kushnirenko-Khovanskii (BKK) bound.  Given a zero-dimensional polynomial system
$P=(f_1,\ldots,f_m)$ with $f_i \in \mathbb Q[x_1, \ldots, x_n]$, the B\'ezout bound on the number of solutions in $\mathbb C^n$ is the
product of the degrees of all the polynomials in the system. The BKK bound on the number of solutions in $(\mathbb C^*)^n$ is the mixed volume of $P$,
which requires $P$ to be a square system, i.e., a system of $n$ equations in
$n$ variables, in this case, $m=n$. The mixed volume of $P$ is the mixed volume of the
{\em Newton polytopes} of $f_1,\ldots,f_n,$ i.e., it is the coefficient of
the term $\lambda_1\cdots\lambda_n$ in the expansion of
$\vol(\lambda_1\newt(f_1)+\cdots+\lambda_n\newt(f_n)).$ Chen provides
sufficient conditions under which the mixed volume of the Newton
polytopes is the normalized volume of the convex hull of their
union. We state these results below and reference them later in this
note.
\begin{thm}\cite{C}
  \label{ChenThm1}
  For finite sets $S_1,\ldots,S_n\subset\QQ^n,$ let
  $\wt{S}=S_1\cup\cdots\cup S_n.$ If for every proper positive
  dimensional face $F$ of $\conv(\wt{S})$ we have $F\cap
  S_i\neq\emptyset$ for each $i=1,\ldots,n$ then $\MV(\conv
  S_1,\ldots, \conv S_n) = n!\vol_n(\conv(\wt{S})).$
\end{thm}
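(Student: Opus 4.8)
The plan is to prove the equality by sandwiching $\MV(\conv S_1,\ldots,\conv S_n)$ between $n!\vol_n(\conv\wt S)$ from above, which needs no hypothesis, and from below, which is where the face condition enters. Write $Q=\conv\wt S$. Since $\conv S_i\subseteq Q$ for every $i$, monotonicity of the mixed volume gives
\[
  \MV(\conv S_1,\ldots,\conv S_n)\;\le\;\MV(Q,\ldots,Q)\;=\;n!\vol_n(Q),
\]
the last equality being read off from $\vol_n\bigl((\lambda_1+\cdots+\lambda_n)Q\bigr)$. This already settles the case $\dim Q<n$, where $\vol_n(Q)=0$ and both sides vanish, so from now on assume $\dim Q=n$.

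For the reverse inequality I would use Bernstein's theorem together with a coefficient (polyhedral) homotopy. Fix generic coefficients $c_{i,a}\in\CC^{*}$ for $a\in\wt S$ and $i=1,\ldots,n$, and put $g_i=\sum_{a\in S_i}c_{i,a}x^{a}$, so $\newt(g_i)=\conv S_i$; by Bernstein's theorem $g_1=\cdots=g_n=0$ has exactly $\MV(\conv S_1,\ldots,\conv S_n)$ solutions in $(\CC^{*})^{n}$, counted with multiplicity. Deform these to
\[
  h_i(x,t)\;=\;g_i(x)\;+\;t\!\!\sum_{a\in\wt S\setminus S_i}\!\! c_{i,a}x^{a},\qquad i=1,\ldots,n,
\]
so that for generic $t$ with $0<t\le 1$ each $h_i(\cdot,t)$ has Newton polytope $Q$ and, by Bernstein's theorem again, $h_1(\cdot,t)=\cdots=h_n(\cdot,t)=0$ has exactly $n!\vol_n(Q)$ solutions in $(\CC^{*})^{n}$. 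Track these $n!\vol_n(Q)$ solution paths as $t\to 0$: each path either converges to a point of $(\CC^{*})^{n}$, which by continuity must be a solution of $g_1=\cdots=g_n=0$, or else escapes to the boundary of the torus. If no path escapes, then all $n!\vol_n(Q)$ endpoints are solutions of $g=0$ in $(\CC^{*})^{n}$, so $n!\vol_n(Q)\le\MV(\conv S_1,\ldots,\conv S_n)$, and together with the upper bound the theorem follows.

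Everything thus reduces to showing that the face hypothesis prevents any path from escaping. A path escaping as $t\to 0$ has a Puiseux expansion $x_j(t)=u_j\,t^{\omega_j}+(\text{higher order})$ with $u\in(\CC^{*})^{n}$ and $\omega\in\ZZ^{n}\setminus\{0\}$ (clearing denominators in the exponents); substituting and equating the lowest-order coefficient in $t$ to zero forces $u$ to satisfy the initial subsystem
\[
  \sum_{a\in A_i(\omega)}c_{i,a}\,u^{a}\;=\;0,\qquad i=1,\ldots,n,
\]
where $A_i(\omega)$ is the set of $a\in\wt S$ minimizing $\langle a,\omega\rangle+\mathbf{1}[a\notin S_i]$. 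Let $F$ be the face of $Q$ on which $\langle\,\cdot\,,\omega\rangle$ is minimized over $Q$; since $\dim Q=n$ and $\omega\neq 0$, $F$ is a proper face. If $\dim F=0$, say $F=\{v\}$ with $v\in\wt S$, choose $i_0$ with $v\in S_{i_0}$; then $A_{i_0}(\omega)=\{v\}$ and the $i_0$-th equation reads $c_{i_0,v}u^{v}=0$, which has no solution in $(\CC^{*})^{n}$. If $\dim F\ge 1$, the hypothesis gives $F\cap S_i\neq\emptyset$ for every $i$, and a short check shows $A_i(\omega)=F\cap S_i$; hence all $n$ polynomials of the initial subsystem have support inside the proper face $F$, so their Newton polytopes all lie in an affine subspace of dimension $\dim F\le n-1$. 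A monomial change of coordinates turns this into a system of $n$ Laurent polynomials in fewer than $n$ variables with generic coefficients, which for generic $c$ has no solution in the torus. Either way the initial subsystem is unsolvable in $(\CC^{*})^{n}$, a contradiction, so no path escapes.

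I expect the main obstacle to be making this last divergence analysis rigorous: justifying that an escaping homotopy path really does produce a torus solution of an initial (facial) subsystem, and that a single generic choice of $c$ simultaneously rules out every such subsystem. The second point is handled by observing that $A_i(\omega)$ depends on $\omega$ only through the face $F$ of $Q$ it selects --- there are finitely many faces, hence finitely many initial subsystems --- so one generic $c$ works for all directions $\omega$ at once (and, with $t$ chosen generic, for the path-tracking as well); this finiteness, and the fact that escaping paths are governed by the lower faces of the Newton polytope of $(h_1,\ldots,h_n)$ in $\RR^{n+1}$, is exactly the content of Bernstein's second theorem and the standard theory behind the polyhedral homotopy method, which I would cite rather than reprove. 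It is worth noting that the hypothesis is used only when $\dim F\ge 1$; the excluded vertices require no assumption because a single monomial cannot vanish on the torus, which is precisely why the face condition is imposed only on positive-dimensional faces.
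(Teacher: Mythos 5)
The paper never proves this statement -- Theorem~\ref{ChenThm1} is quoted from \cite{C} and used as a black box -- so there is no in-paper argument to compare yours against; judged on its own merits, your proof is essentially correct and follows the natural Bernstein-theoretic route. The upper bound via monotonicity of mixed volume together with $\MV(Q,\ldots,Q)=n!\vol_n(Q)$ is fine, and your lower bound is a coefficient (polyhedral-type) homotopy whose implicit lifting gives height $0$ to points of $S_i$ and height $1$ to points of $\wt{S}\setminus S_i$. The heart of the matter is handled correctly: for a positive-dimensional minimizing face $F$ you verify $A_i(\omega)=F\cap S_i$, nonempty by hypothesis, so the initial system is a generic system supported in a proper face and has no torus zero; for a vertex $v$ the equation indexed by any $i_0$ with $v\in S_{i_0}$ degenerates to a single monomial, which pinpoints exactly why the hypothesis is needed only for positive-dimensional faces (a direct appeal to the usual face-system criterion with all supports enlarged to $\wt{S}$ would fail at vertices, so the $t$-weighted perturbation is genuinely doing work here). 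Three points should be nailed down, all standard: (i) when no path escapes you need distinct paths to have distinct limits (or to count with multiplicity); since the $c_{i,a}$ are generic, the torus zeros of $g$ are nondegenerate and the implicit function theorem gives a unique local path through each, so the count $n!\vol_n(Q)\le\MV$ goes through; (ii) the exponents lie in $\QQ^n$, so either rescale the supports to $\ZZ^n$ (both sides of the identity scale by the same factor) or keep $\omega$ rational -- note that literally ``clearing denominators in $\omega$'' without reparametrizing $t$ (say $t=s^N$) does not rescale the offset $\mathbf{1}[a\notin S_i]$ and could change the minimizing sets; your case analysis only uses $\langle a,\omega\rangle\ge m$ and positivity of the offset, so it survives, but the parenthetical as written is imprecise; (iii) the genericity bookkeeping (a single generic $c$ makes Bernstein exact at $t=1$, hence for all but finitely many $t$, and simultaneously kills the finitely many facial subsystems) should be stated once, as you indicate. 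With the escaping-path-to-initial-system step cited to Bernstein's second theorem or the Huber--Sturmfels polyhedral homotopy theory, as you propose, the argument is complete.
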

\begin{thm}\cite{C}
  \label{ChenThm2}
  Given $n$ nonempty finite sets $S_1,\ldots,S_n\subset\QQ^n,$ let
  $\wt{S}=S_1\cup\cdots\cup S_n.$ If every positive dimensional face
  $F$ of $\conv(\wt{S})$ satisfies one of the following conditions:
  \begin{enumerate}
  \item[(i)] $F\cap S_i\neq\emptyset$ for all $i\in\{1,\ldots,n\};$
    \item[(ii)] $F\cap S_i$ is a singleton for some
      $i\in\{1,\ldots,n\};$
      \item[(iii)] For each $i\in I:=\{i~|~F\cap S_i\neq\emptyset\},
        F\cap S_i$ is contained in a common coordinate subspace of
        dimension $|I|,$ and the projection of $F$ to this subspace is
        of dimension less than $|I|;$
      \end{enumerate}
      then $\MV(\conv S_1,\ldots,\conv S_n) = n!\vol_n(\conv(\wt{S})).$
    \end{thm}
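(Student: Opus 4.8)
The plan is to prove the two inequalities $\MV(\conv S_1,\dots,\conv S_n)\le n!\,\vol_n(Q)$ and $\MV(\conv S_1,\dots,\conv S_n)\ge n!\,\vol_n(Q)$ separately, where $Q:=\conv(\wt S)$. The first is immediate: since $S_i\subseteq\wt S$ we have $\conv S_i\subseteq Q$, so by monotonicity of mixed volume and $\MV(Q,\dots,Q)=n!\,\vol_n(Q)$ we obtain the upper bound. For the reverse inequality I would use a polyhedral homotopy. Fix generic coefficients $c_{i\alpha}$ for $\alpha\in\wt S$, set $f_i=\sum_{\alpha\in S_i}c_{i\alpha}x^\alpha$, and deform
\[
  H_i(t,x)\;=\;f_i(x)\;+\;t\!\!\sum_{\alpha\in\wt S\setminus S_i}\!\!c_{i\alpha}x^\alpha,\qquad t\in[0,1].
\]
For $t\neq0$ each $H_i(t,\cdot)$ has Newton polytope $Q$, so for all but finitely many $t$ the system $H(t,\cdot)=0$ has exactly $n!\,\vol_n(Q)$ simple solutions in $(\CC^*)^n$, whereas at $t=0$ it is a generic system with supports $S_1,\dots,S_n$, whose number of isolated solutions in $(\CC^*)^n$, counted with multiplicity, is at most $\MV(\conv S_1,\dots,\conv S_n)$. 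Hence it suffices to show that \emph{no solution path of the homotopy escapes} --- to infinity or into a coordinate hyperplane --- as $t\to0^+$; then all $n!\,\vol_n(Q)$ paths limit to points of $(\CC^*)^n$, giving $\MV(\conv S_1,\dots,\conv S_n)\ge n!\,\vol_n(Q)$ and, with the upper bound, equality.

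By the Puiseux-series analysis underlying polyhedral homotopy (Huber--Sturmfels), an escaping path corresponds to a direction $a\in\QQ^n\setminus\{0\}$ and a point $\xi\in(\CC^*)^n$ that is a common root of the facial subsystems obtained by lifting each $\alpha\in S_i$ to height $0$ and each $\alpha\in\wt S\setminus S_i$ to height $1$ and extracting initial forms in the direction $(a,1)$. Let $F$ be the face of $Q$ on which $\langle a,\cdot\rangle$ attains its minimum $\mu$. A direct cost comparison shows a height-$1$ exponent always costs at least $\mu+1$, so whenever $S_i\cap F\neq\emptyset$ the $i$-th facial equation is exactly $\sum_{\alpha\in S_i\cap F}c_{i\alpha}\xi^\alpha=0$; moreover every vertex of $Q$ lies in some $S_i$, so a direction $a$ with $\dim F=0$ always produces a monomial facial equation with no torus root. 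Thus only positive-dimensional faces $F$ matter, and I would eliminate them using the hypotheses: under (i), all $n$ facial equations are supported in the proper affine subspace $\aff(F)$, so their mixed volume is $0$ and, by Bernstein's theorem, a generic choice of the $c_{i\alpha}$ leaves no common torus root; under (ii), the $j$-th facial equation is the monomial $c_{j\alpha_0}\xi^{\alpha_0}$ with $\{\alpha_0\}=S_j\cap F$, which has no torus root at all; under (iii), the $|I|$ facial equations indexed by $I$ involve only the $|I|$ coordinates of the distinguished coordinate subspace $L$ and are supported in $F\cap L$, whose affine dimension is at most that of the projection of $F$ onto $L$, hence less than $|I|$, so this subsystem of $|I|$ equations in $|I|$ variables has mixed volume $0$ and generically no torus root. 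Since only finitely many facial subsystems arise, a single generic choice of coefficients rules out escaping paths for all directions simultaneously.

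The main obstacle is the facial-system step of the previous paragraph: making rigorous the dictionary between escaping homotopy paths and common torus roots of the \emph{lifted} initial systems (this is genuinely about the lifted supports, not about $\conv S_1,\dots,\conv S_n$ alone), and then verifying that (i)--(iii) are exhaustive, i.e.\ that every positive-dimensional proper face $F$ of $Q$ forces the relevant facial subsystem to be empty over $(\CC^*)^n$. Case (i) is precisely Theorem~\ref{ChenThm1}; cases (ii) and (iii) are the new content, and (iii) hinges on the two observations that one may restrict to the equations indexed by $I$ and that the hypothesis ``the projection of $F$ to $L$ has dimension less than $|I|$'' is exactly what makes that restricted system's mixed volume vanish. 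The remaining points are routine polyhedral-homotopy bookkeeping: that the ``bad'' coefficient locus for each of the finitely many faces of $Q$ is a proper subvariety, hence avoidable by a generic choice, and that the count $n!\,\vol_n(Q)$ persists along the homotopy for generic $t$ (the standard $\gamma$-trick).
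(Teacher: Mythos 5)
This theorem is imported verbatim from Chen \cite{C}; the paper gives no proof of it, so there is no internal argument to compare yours against, and I can only assess the proposal on its own terms. As a proof sketch it is sound, and it follows the same root-counting philosophy (Bernstein's second theorem / attainment of the BKK bound) that underlies Chen's result, rather than any purely convex-geometric identity. The upper bound $\MV(\conv S_1,\ldots,\conv S_n)\le n!\vol_n(\conv(\wt{S}))$ is indeed immediate from monotonicity. For the lower bound, your coefficient homotopy with lifting $0$ on $S_i$ and $1$ on $\wt{S}\setminus S_i$ is the standard Huber--Sturmfels setup, and the key computation is correct: for a nonzero direction $a$ with minimizing face $F$ of $\conv(\wt{S})$, every lifted exponent costs at least $\mu+1$, so whenever $S_i\cap F\neq\emptyset$ the initial form of $H_i$ is supported exactly on $S_i\cap F$; vertices of $\conv(\wt{S})$ lie in $\wt{S}$, so zero-dimensional faces always yield a monomial initial form. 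Your treatment of the three hypotheses is then correct: under (i) all $n$ initial forms have supports in the affine hull of a proper face; under (ii) some initial form is a monomial; under (iii) the subsystem indexed by $I$ involves only the $|I|$ coordinates of the distinguished coordinate subspace and its supports lie in $\pi_L(F)$, of dimension less than $|I|$. Finitely many faces occur, so one generic coefficient choice handles all of them, and each nondegenerate root of the limit system (generic with supports $S_i$) attracts at most one of the $n!\vol_n(\conv(\wt{S}))$ paths, giving the reverse inequality.

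Two points deserve more care if you write this out. First, in cases (i) and (iii) you argue ``mixed volume $0$, hence generically no torus root''; mixed volume only controls isolated roots, and what you actually need is the stronger (but true) statement that after a multiplicative change of coordinates you have strictly more generic equations than effective variables, so the facial subsystem has empty zero set in the torus --- state and prove that, not the mixed-volume slogan. Second, the supports are allowed to lie in $\QQ^n$, while Bernstein-type statements are usually phrased for lattice supports; either rescale by a common denominator $N$ (both $\MV$ and $n!\vol_n$ scale by $N^n$, and roots correspond under the $N$-th power map of the torus) or carry rational weights through the Puiseux analysis. With these adjustments, and citing the standard dictionary between escaping paths and torus roots of the lifted initial systems, your outline completes into a correct proof.
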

\begin{cor} \cite{C}
  \label{ChenCor1}
  Given nonempty finite set $S_{i,j}\subset\QQ^n$ for $i=1,\ldots,m$
  and $j=1,\ldots,k_i$ with $k_i\in\ZZ^+$ and $k_1+\cdots+k_m=n,$ let
  $Q_{i,j} =\conv(S_{i,j}), \wt{S}_i = \bigcup_{j=1}^{k_i}S_{i,j},$
  and $\wt{Q}_i = \conv(\wt{S}_i).$ If for each $i,$ every positive
  dimensional face of $\wt{Q}_i$ intersecting $S_{i,j},$ for some $j,$
  on at least two points also intersects all $S_{i,1},\ldots,S_{i,k},$
  then
  \begin{equation*}
    \MV(Q_{1,1},\ldots,Q_{m,k_m}) = \MV(\underbrace{\wt{Q}_1,\ldots,\wt{Q}_1}_{k_1},\ldots,\underbrace{\wt{Q}_m,\ldots,\wt{Q}_m}_{k_m}).
  \end{equation*}
\end{cor}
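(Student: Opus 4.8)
The plan is to prove the two inequalities separately. Since each $Q_{i,j}=\conv(S_{i,j})$ is contained in $\wt{Q}_i=\conv(\wt{S}_i)$ and mixed volume is monotone under inclusion in each argument, the inequality $\mu:=\MV(Q_{1,1},\ldots,Q_{m,k_m})\le\MV(\underbrace{\wt{Q}_1,\ldots,\wt{Q}_1}_{k_1},\ldots,\underbrace{\wt{Q}_m,\ldots,\wt{Q}_m}_{k_m})=:N$ is free, and all the content is in showing $\mu=N$. For that I would pass to polynomial systems and use Bernstein's theorem together with its refinement (the criterion for the BKK bound to be attained). After clearing denominators, assume $S_{i,j}\subset\ZZ^n$. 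Let $G=(g_{i,j})$ with $g_{i,j}$ supported on $S_{i,j}$ with generic coefficients; by Bernstein's theorem $G$ has exactly $\mu$ solutions in $(\CC^*)^n$. Let $H=(h_{i,j})$ where, for each $i$, all of $h_{i,1},\ldots,h_{i,k_i}$ are supported on $\wt{S}_i$ with generic coefficients; then $H$ has exactly $N$ solutions in $(\CC^*)^n$.

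Next I would link $H$ to $G$ by the polyhedral (coefficient) homotopy obtained by multiplying, in $h_{i,j}$, the coefficient of each monomial $x^a$ with $a\in\wt{S}_i\setminus S_{i,j}$ by $t$, leaving all other coefficients fixed and generic; this is the polyhedral homotopy with the lifting $\omega_{i,j}$ equal to $0$ on $S_{i,j}$ and $1$ on $\wt{S}_i\setminus S_{i,j}$, and it equals $H$ at $t=1$ and $G$ at $t=0$. As $t\to0$ each of the $N$ solution paths either converges to a point of $(\CC^*)^n$, which is then a solution of $G$ — so at most $\mu$ paths do this — or escapes to the toric boundary. Hence it suffices to show that no path escapes; by the standard theory behind the sharpness criterion, a path escaping with valuation $w$ forces the corresponding facial (initial) subsystem to have a solution in $(\CC^*)^n$, where the $(i,j)$-th initial polynomial is the sum of the monomials $x^a$ of $h_{i,j}$ minimizing $\langle w,a\rangle+\omega_{i,j}(a)$. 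Since the surviving coefficients are generic, such a facial subsystem has a torus solution only if the mixed volume of its supports is positive.

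The combinatorial heart is then to show this never happens. Fix $w\ne0$, set $\delta_i:=\min_{a\in\wt{S}_i}\langle w,a\rangle$, and let $F_i:=(\wt{Q}_i)_w$ be the face of $\wt{Q}_i$ on which $\langle w,\cdot\rangle=\delta_i$. Comparing the two competing minima $\min_{a\in S_{i,j}}\langle w,a\rangle$ and $1+\min_{a\in\wt{S}_i\setminus S_{i,j}}\langle w,a\rangle$ shows that whenever $F_i\cap S_{i,j}\ne\emptyset$ the $(i,j)$-th initial polynomial has support exactly $F_i\cap S_{i,j}$, which lies in the hyperplane $\{\langle w,\cdot\rangle=\delta_i\}$, a translate of $w^{\perp}$. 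Now split into cases. If some $F_i$ misses some $S_{i,l}$, then the contrapositive of the hypothesis says $F_i$ meets every $S_{i,j}$ in at most one point; choosing $j$ with $F_i\cap S_{i,j}\ne\emptyset$ (possible, since $\delta_i$ is attained on $\wt{S}_i$), the $(i,j)$-th initial polynomial is a single nonzero monomial, so the facial subsystem has no torus solution. Otherwise every $F_i$ meets every $S_{i,l}$, so all $n$ initial Newton polytopes lie in translates of $w^{\perp}$; their Minkowski sum then has dimension at most $n-1$, so the facial mixed volume is $0$ and again there is no torus solution. In both cases no path escapes, giving $N\le\mu$ and hence $\mu=N$.

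I expect the main obstacle to be the bookkeeping in the facial analysis, precisely making explicit what the $(i,j)$-th initial polynomial is when the two competing minima coincide (it sits "one lattice layer above $F_i$"): one must verify that it is exactly the hypothesis — that $F_i$ meets all of $S_{i,1},\ldots,S_{i,k_i}$, not merely some of them — that prevents a single group $i$ from simultaneously contributing one initial Newton polytope flat in the direction $w$ and another that is non-flat, which is what would otherwise make the facial mixed volume positive and produce an escaping path. A more routine point is to phrase the polyhedral-homotopy step so that the accounting ``$N$ paths; at most $\mu$ with a torus limit; the rest certified by facial subsystems'' is literally correct, e.g.\ via coherence of the induced mixed subdivision or via Puiseux-series branches; alternatively one can avoid homotopies and run the same face dichotomy through the description of $\MV$ as the sum of volumes of the fully mixed cells of a fine mixed subdivision, but the deformation argument packages the required genericity most cleanly. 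This is all in the spirit of Theorems~\ref{ChenThm1} and~\ref{ChenThm2}; in particular, if in addition the hypothesis of Theorem~\ref{ChenThm2} holds for $\wt{S}=\bigcup_{i,j}S_{i,j}$, the common value equals $n!\,\vol_n(\conv(\wt{S}))$.
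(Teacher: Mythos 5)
The paper does not actually prove this statement; it is quoted from \cite{C} without proof, so there is no internal argument to compare against. Judged on its own terms, your proposal is sound and is in the same spirit as Chen's original treatment (which also runs through Bernstein-type root counts and the facial criterion for attainment of the BKK bound). The genuinely new content --- the facial dichotomy --- is correct and complete: monotonicity gives $\mu\le N$; for a branch escaping with valuation $w\ne 0$, if some face $F_i=(\wt{Q}_i)_w$ misses some $S_{i,l}$, the hypothesis (in contrapositive, and trivially when $F_i$ is a vertex) forces $|F_i\cap S_{i,j}|\le 1$ for all $j$, and picking $j$ with $F_i\cap S_{i,j}\ne\emptyset$ makes the $(i,j)$-th initial form a single monomial, killing the facial system; otherwise every $F_i$ meets every $S_{i,j}$, so all initial supports are $F_i\cap S_{i,j}$, lying in translates of $w^{\perp}$, and genericity (effectively $n$ independent generic equations in an $(n-1)$-torus, or equivalently the ``generic plus zero facial mixed volume'' lemma you invoke) rules out torus zeros. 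One remark: the obstacle you flag at the end --- the tie case where the initial form sits ``one lattice layer above $F_i$'' --- never actually needs to be resolved in your argument, since in the first case you only use the one index $(i,j)$ with $F_i\cap S_{i,j}\ne\emptyset$ (where the level-$0$ minimum strictly beats the level-$1$ terms), and in the second case $F_i\cap S_{i,j}\ne\emptyset$ for all $(i,j)$, so no tie arises.

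The remaining steps you defer are standard but do require the care you acknowledge: the path accounting ($N$ nondegenerate solutions at $t=1$ lie on curve components dominating the $t$-line, Puiseux branches at $t\to 0$ with leading coefficients solving the $(w,\omega)$-initial system, and nondegeneracy of the generic limit system $G$ to prevent several branches or a ramified branch from collapsing onto one solution of $G$), plus the observation that only finitely many initial systems occur so one generic coefficient choice handles all $w$ simultaneously, and the rational-to-integer rescaling of supports. None of these is a gap in the sense of a missing idea; they are the standard machinery behind Bernstein's sharpness criterion and polyhedral homotopies, and your alternative phrasing via fine mixed subdivisions would package the same combinatorial dichotomy without the analytic bookkeeping.
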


In this collection of case studies, for each family of networks, we
give the steady-state degree, the B\'ezout bound, and the mixed volume
of the steady-state systems employing these results and other standard
techniques.

\section{Three families of networks}
\label{sec:res}
In what follows, we investigate three infinite families of reaction networks.  The second two families result from successively joining, or gluing, smaller networks
to form a larger network as defined in \cite{G-H-M-S}.  

The first two families in this study showcase different methods that
can be used to understand the steady-state degree, while the third
family, mulitisite distributive sequential phosphorylation, requires more
sophisticated methods.  In particular, in the third case study, we
describe the polytope $Q_n$ obtained by taking the convex hull of the
exponent vectors of the support of the system.  We compute the
normalized volume of $Q_n$, which bounds the number of non-boundary steady-states.  This computation is done by
first establishing the $\mathcal{H}$-representation of $Q_n$ and then explicitly constructing a regular unimodular triangulation of $Q_n$. 

\subsection{Cell death model}
\label{subsec:cellDeath}
The first case study is a model representing the
cell death mechanism as described in \cite{H-H}.
We consider these cluster-stabilization reactions involving unstable and
stable open receptors, where each network of the family has two species: $Y$ and
$Z$, the unstable and stable receptors, respectively. The $n$th reaction network in this family,
denoted $CD_n$, has $n$ complexes $C_i$
of the form $C_i=(n-i)Y+iZ$, with $i=1,\ldots, n,$ and $n(n-1)/2$ reactions $C_i
\xrightarrow{~~k_{i,j}~~} C_j$ such that $i<j.$ The polynomial system
associated to $CD_n$ consists of one
linear conservation equation in the variables $x_Y$ and $x_Z$ and
their initial conditions $\bc = (c_Y, c_Z)$ and two steady-state equations, one for
each species. Specifically, the polynomial system
of interest is
\begin{equation}
  \begin{split}
    \label{eq:cellDeathGeneral}
    f_1 & = x_Y+x_Z-c_Y-c_Z \\ 
f_2 & = \dot{x}_Y  = -\sum_{{i,j,i\neq j}}^{n}(j-i)k_{i,j}x_Y^{j}x_Z^{n-j} \\
f_3 & = \dot{x}_Z = \sum_{i, j, i\neq j}^{n}(j-i)k_{i,j}x_Y^{j}x_Z^{n-j}.
\end{split}
\end{equation}
Since $\dot{x}_Z = -\dot{x}_Y,$ there is
only one unique steady-state equation of degree $n.$
In this example, both the B\'ezout and BKK bounds are linear in
$n,$ with the BKK bound being slightly lower. In
Proposition \ref{prop:cellDeathSSD} we show that the steady-state
degree, including boundary solutions, is given by
the B\'ezout bound; see Remark \ref{rmk:cellDeath}.

\begin{exm}
  \label{ex:CD4}
For $n=4,$ the cell death model has two species, four complexes, and
six reactions. Figure \ref{f:cd4} shows the reaction graph for this
model.
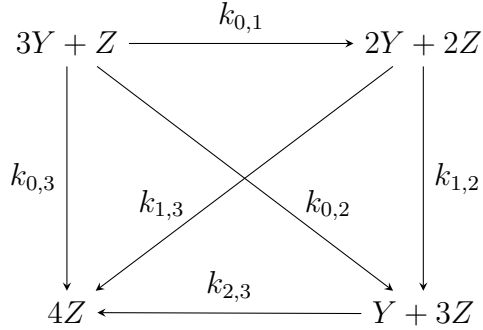
\begin{figure}
    \centering   
        \begin{tikzpicture}
          \matrix (m) [matrix of math nodes, row sep=3cm, column sep=3cm]
          {3Y+Z  & 2Y+2Z \\
            4Z & Y+3Z \\ };
          \path[-stealth]
          (m-1-1) edge node [above] {$k_{0,1}$} (m-1-2)
                        edge node [left] {$k_{0,3}$} (m-2-1)
                        edge node [right=+1.1, below] {$k_{0,2}$} (m-2-2)
          (m-1-2) edge node [right] {$k_{1,2}$} (m-2-2)
                        edge node [left=+1.1, below] {$k_{1,3}$}
                        (m-2-1)
          (m-2-2) edge node [above] {$k_{2,3}$} (m-2-1);
        \end{tikzpicture}
    \caption{A chemical reaction network of type $CD_4$ with 4
      complexes and 6 reactions.}
    \label{f:cd4} 
  \end{figure}
  The polynomial system for $CD_4$ consists of one conservation
  equation and two steady-state equations, as displayed below. 
  \begin{equation}
    \begin{split}
          \label{ps:cd4}
f_1 & = x_Y+x_Z-c_Y-c_Z \\
f_2 = \dot{x_Y} & =
-k_{0,1}x_Y^3x_Z-2k_{0,2}x_Y^3x_Z-3k_{0,3}x_Y^3x_Z\\
&~~~~~~~~~~~~~~~~~~~~ -k_{1,2}x_Y^2x_Z^2-2k_{1,3}x_Y^2x_Z^2-k_{2,3}x_Yx_Z^3\\
f_3 = \dot{x_Z} & =
k_{0,1}x_Y^3x_Z+3k_{0,2}x_Y^3x_Z+2k_{0,3}x_Y^3x_Z\\
&~~~~~~~~~~~~~~~~~~~~ +k_{1,2}x_Y^2x_Z^2+2k_{1,3}x_Y^2x_Z^2+k_{2,3}x_Yx_Z^3.
\end{split}
\end{equation}
Observe that $f_3=-f_2,$ hence we have a square system in two
variables.
  \begin{figure}
\begin{multicols}{2}
\centering
  \includegraphics[scale=0.25]{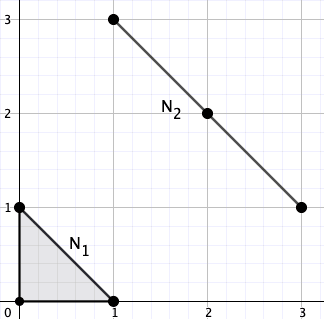}
  \caption{Newton polytopes for the polynomials corresponding to $CD_4$
  in Example \ref{ex:CD4}.}
\label{fig:KnN1N2}
  \includegraphics[scale=0.2]{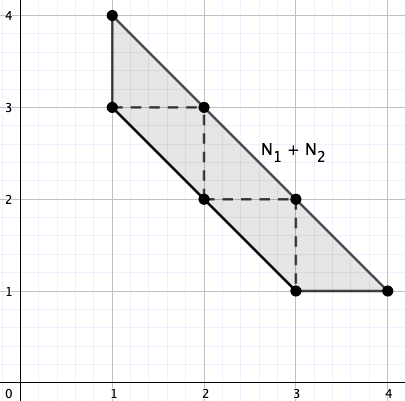}
  \caption{Minkowski sum  of the Newton polytopes for the system in
    Example \ref{ex:CD4}.}
  \label{fig:KnMink}
   \end{multicols}
  \end{figure}
\end{exm}

\begin{prop}
  \label{prop:cellDeathBezout}
The B\'ezout bound for the chemical reaction network $CD_n$ is
$n.$
\end{prop}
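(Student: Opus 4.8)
The plan is to recognize that the displayed steady-state system \eqref{eq:cellDeathGeneral} is only superficially overdetermined, pass to a genuine square system in the two unknowns $x_Y,x_Z$, and then multiply the degrees of its two polynomials.

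First I would observe, directly from \eqref{eq:cellDeathGeneral}, that $f_3=-f_2$; this holds because along every reaction $C_i\to C_j$ the net change in $Z$ is the negative of the net change in $Y$ (the two coordinates of each complex $C_i=(n-i)Y+iZ$ sum to $n$, so every reaction vector $C_j-C_i$ has $Y$- and $Z$-components summing to zero). Hence the common zero locus of $(f_1,f_2,f_3)$ coincides with that of the square system $(f_1,f_2)$, and it is to this system that the B\'ezout bound applies.

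Next I would record the two degrees. Plainly $\deg f_1=1$. Every monomial occurring in $f_2$ is of the form $x^{C_i}=x_Y^{\,n-i}x_Z^{\,i}$, hence has total degree exactly $n$, so $\deg f_2=n$ as soon as $f_2\not\equiv 0$. That $f_2\not\equiv 0$ is automatic for $n\ge 2$: by \eqref{eq:ODE} the coefficient of $x^{C_i}$ in $f_2=\dot x_Y$ equals $-\sum_{j}(j-i)k_{i,j}$, the sum being over the reactions $C_i\to C_j$ (all with $j>i$), which is strictly negative whenever $C_i$ emits at least one reaction. Therefore the B\'ezout bound for $CD_n$ is $\deg f_1\cdot\deg f_2=1\cdot n=n$.

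There is no substantive obstacle here. The only point that merits even a moment's attention is that no cancellation can drop $\deg f_2$ below $n$, which is immediate from the sign pattern just noted: every coefficient of $f_2$ is, up to one overall sign, a sum of the positive quantities $(j-i)k_{i,j}$ taken over reactions $C_i\to C_j$ with $i<j$, so the homogeneous top-degree part of $f_2$ never vanishes.
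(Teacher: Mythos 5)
Your proposal is correct and follows essentially the same route as the paper: discard $f_3$ because $f_3=-f_2$, then take the product of degrees $\deg f_1\cdot\deg f_2=1\cdot n=n$. The only difference is that you spell out why $f_3=-f_2$ and why no cancellation can lower $\deg f_2$ below $n$, details the paper leaves implicit.
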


\begin{proof}
The B\'ezout bound can be seen from the system (\ref{eq:cellDeathGeneral}) --
there are always three equations, one linear and two of degree $n.$
However, the two degree $n$ equations are identical, hence we have
two equations and the B\'ezout bound is $n.$
\end{proof}

\begin{prop}
  \label{prop:cellDeathMV}
The polynomial system corresponding to the chemical reaction network $CD_n$ has mixed volume $n-2.$
\end{prop}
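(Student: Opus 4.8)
The plan is to work directly with the Newton polytopes of the steady-state system. Since $f_3=-f_2$, for generic $\mathbf{k},\mathbf{c}$ the system $(f_1,f_2,f_3)$ is, as far as counting solutions is concerned, the square system $(f_1,f_2)$ in the two unknowns $x_Y,x_Z$, so the quantity in question is $\MV(\newt(f_1),\newt(f_2))$. The first step is to identify these two polytopes. As $c_Y+c_Z\neq 0$ generically, $\supp(f_1)=\{(0,0),(1,0),(0,1)\}$ and $\newt(f_1)=T:=\conv\{(0,0),(1,0),(0,1)\}$, the standard triangle. For $f_2$, grouping the reactions $C_i\to C_j$ (with $i<j$ and $C_i=(n-i)Y+iZ$) by their reactant complex gives
\[
  f_2 \;=\; -\sum_{i=1}^{n-1}\Big(\textstyle\sum_{j>i}(j-i)\,k_{i,j}\Big)\,x_Y^{\,n-i}x_Z^{\,i},
\]
and each coefficient $\sum_{j>i}(j-i)k_{i,j}$ is a positive combination of rate constants, hence nonzero; in particular this holds at $i=1$ and $i=n-1$, which produce the extreme exponent vectors $(n-1,1)$ and $(1,n-1)$. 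Thus $\supp(f_2)=\{(n-i,i):i=1,\dots,n-1\}$ is a collinear set of lattice points on the line $\{(a,b):a+b=n\}$, and $\newt(f_2)$ is the segment $s:=\conv\{(1,n-1),(n-1,1)\}$. Equivalently, factoring out $x_Yx_Z$ writes $f_2=x_Yx_Z\,\tilde g(x_Y,x_Z)$ with $\tilde g$ a binary form of degree $n-2$, so $\newt(f_2)=(1,1)+\newt(\tilde g)=(1,1)+(n-2)\conv\{(1,0),(0,1)\}$.

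The second step is the mixed-volume computation. Because $\newt(f_2)$ is one-dimensional, the hypotheses of Theorem~\ref{ChenThm1} and Corollary~\ref{ChenCor1} fail — for instance the edge $\conv\{(0,0),(1,0)\}$ of $\conv(\supp(f_1)\cup\supp(f_2))$ meets $\supp(f_1)$ but not $\supp(f_2)$, and none of conditions (i)--(iii) of Theorem~\ref{ChenThm2} hold for it — so I compute $\MV(T,s)$ directly from the planar mixed-area identity
\[
  \MV(T,s)\;=\;\vol(T+s)-\vol(T)-\vol(s),
\]
with the normalization $\MV(P,P)=2\vol(P)$. Here $\vol(s)=0$, so it remains to find $\vol(T+s)$. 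Writing $s$ as a translate of the segment $[0,v]$ with $v=(n-2)(1,-1)$, the Minkowski sum $T+[0,v]$ is the region swept as $T$ is translated along $v$, so $\vol(T+s)=\vol(T)+|v|\cdot w$, where $w$ is the length of the orthogonal projection of $T$ onto the line perpendicular to $v$. Projecting $T$ onto the direction $\tfrac{1}{\sqrt{2}}(1,1)$ gives a segment of length $\tfrac{1}{\sqrt{2}}$, and $|v|=(n-2)\sqrt{2}$, so $\vol(T+s)=\tfrac12+(n-2)$ and therefore $\MV(\newt(f_1),\newt(f_2))=n-2$.

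The only delicate point is ensuring that $\newt(f_2)$ is the full segment $s$ and not a proper sub-segment; this is precisely the nonvanishing of the coefficients at the endpoints $(n-1,1)$ and $(1,n-1)$, which holds unconditionally here because all reaction multiplicities $j-i$ and rate constants $k_{i,j}$ are strictly positive. As a consistency check — and an alternative route to the count — one may count BKK solutions directly: in $(\CC^*)^2$ the equation $f_2=x_Yx_Z\,\tilde g=0$ forces $\tilde g=0$, a degree-$(n-2)$ binary form that for generic $\mathbf{k}$ splits into $n-2$ distinct lines through the origin, none of them a coordinate axis (since $\tilde g$ contains the monomials $x_Y^{n-2}$ and $x_Z^{n-2}$ with nonzero coefficient); each such line meets the affine line $\{f_1=0\}$ in a single point of $(\CC^*)^2$ for generic $\mathbf{k},\mathbf{c}$, giving $n-2$ solutions. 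This recovers the mixed volume and also foreshadows Proposition~\ref{prop:cellDeathSSD}: the steady-state degree is $n$, exceeding the mixed volume by exactly the two boundary steady-states lost upon restricting to the torus.
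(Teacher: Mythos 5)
Your proof is correct and follows essentially the same route as the paper: identify $\newt(f_1)$ as the unit triangle and $\newt(f_2)$ as the segment from $(1,n-1)$ to $(n-1,1)$, then extract the mixed volume from $\vol(\newt(f_1)+\newt(f_2))-\vol(\newt(f_1))-\vol(\newt(f_2))$. The only differences are cosmetic --- you compute the area of the Minkowski sum by a sweep/width argument rather than the paper's trapezoid base-height calculation, and you add a direct count of torus solutions as a consistency check.
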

\noindent
The proof of this result requires a direct computation of the mixed
volume of the system. There are two Newton polytopes for any $n,$ one
of which is a line segment. Hence, the computation is straightforward. Recall
that the mixed volume of $m$ polytopes $Q_1,\ldots,Q_m\subset\RR^n$ is
$\MV(Q_1,\ldots,Q_m),$ which is the
coefficient of $\lambda_1\lambda_2\cdots\lambda_m$ in the expansion of
$\vol_n(\lambda_1Q_1+\lambda_2Q_2+\cdots+\lambda_mQ_m),$
with  $\lambda_i\geq0.$

\begin{proof}
Consider the system (\ref{eq:cellDeathGeneral}) for a network of type $CD_n$ for
some $n>1.$ As discussed earlier, we can consider only the first two
polynomials $f_1$ and $f_2,$ whose Newton polytopes in $\RR^2$ are
\begin{equation}
  \begin{split}
    \label{eq:cdNP}
    N_1 & = \conv((1,0),(0,1),(0,0))\\
    N_2 & = \conv((1,n-1),(2,n-2),\ldots,(n-2,2),(n-1,1))
  \end{split}
\end{equation}
Note that $N_1$ is a triangle of area $\frac{1}{2}$ and $N_2$ is
a line of length $\sqrt{2}(n-2).$ In this case, the mixed volume of
(\ref{eq:cellDeathGeneral}) is the coefficient of $\lambda_1\lambda_2$
in the following expansion
\begin{equation}
  \begin{split}
    \label{MinkowskiExp}
    \vol_2(\lambda_1N_1+\lambda_2N_2) =
    \vol_2(N_1)\lambda_1^2 & +2\vol_2(N_1,N_2)\lambda_1\lambda_2\\
    & +
    \vol_2(N_2)\lambda_2^2, ~~~\lambda_1,\lambda_2\geq0,
  \end{split}
\end{equation}
implying that
\begin{equation}
  \begin{split}
    \label{volumeComp}
    \vol_2(N_1,N_2) = \frac{1}{2}(\vol_2(N_1+N_2)-(\vol_2(N_1)+\vol_2(N_2))).
  \end{split}
\end{equation}
Since $N_1$ is an equilateral right triangle of side length one, we have that
$\vol_2(N_1)=\frac{1}{2},$ and because $N_2$ is a line, it follows
that $\vol_2(N_2)=0.$ The polytope $N_1+N_2$ is the Minkowski sum of
the two Newton polytopes $N_1$ and $N_2,$ that is
$N_1+N_2=\conv(\{a+b ~|~ a\in N_1, b\in N_2\}).$ The
Minkowski sum of a line segment and an equilateral right triangle is a
trapezoid, as shown in Figure \ref{fig:KnMink} for $n=4.$
The two bases of the trapezoid have length $\sqrt{2}(n-2)$ and
$\sqrt{2}(n-1),$ and the height of the trapezoid is
$\frac{1}{\sqrt{2}}.$ Hence, the area of $N_1+N_2$ is
\begin{equation}
  \label{MinkVol}
\vol_2(N_1+N_2) = \frac{\sqrt{2}(2n-3)}{2}\cdot\frac{1}{\sqrt{2}}=\frac{2n-3}{2},
\end{equation}
and from (\ref{volumeComp}) we have that $\vol_2(N_1,N_2) =
\frac{n-2}{2}.$ Thus, the coefficient of $\lambda_1\lambda_2$ in
(\ref{MinkowskiExp}) is $n-2,$ which is precisely $\MV(N_1,N_2).$ 
\end{proof}

\begin{prop}
  \label{prop:cellDeathSSD}
For the chemical reaction network $CD_n$ there are $n$ steady
states, including two boundary steady-states. 
\end{prop}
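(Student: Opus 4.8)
The plan is to strip away the redundancy in the system and reduce to an explicitly solvable planar system. Since $f_3=-f_2$, the steady-state system has the same solution set in $\CC^2$ as $\{f_1=0,\ f_2=0\}$. The structural point I would exploit is that every monomial of $f_2$ is of the form $x_Y^{\,n-i}x_Z^{\,i}$ with $1\le i\le n-1$, hence is divisible by $x_Yx_Z$; writing $f_2=-\,x_Yx_Z\,g(x_Y,x_Z)$ with
\begin{equation*}
  g(x_Y,x_Z)\;=\;\sum_{i=1}^{n-1}\gamma_i\,x_Y^{\,n-1-i}x_Z^{\,i-1},\qquad \gamma_i\;=\;\sum_{j=i+1}^{n}(j-i)\,k_{i,j},
\end{equation*}
we see that $g$ is a binary form of degree $n-2$. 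The coefficients $\gamma_1,\dots,\gamma_{n-1}$ are linear forms in pairwise disjoint blocks of the rate constants, so as $(\mathbf{k},\mathbf{c})$ varies generically $g$ behaves like a generic binary form of degree $n-2$; in particular $\gamma_1,\gamma_{n-1}\ne 0$, so $g$ is divisible by neither $x_Y$ nor $x_Z$, and its $n-2$ roots are simple.

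Next I would read off the solutions of $\{f_1=0,\ f_2=0\}$. The zero locus of $f_2$ in $\CC^2$ is $\{x_Y=0\}\cup\{x_Z=0\}\cup\{g=0\}$. Intersecting with the line $f_1=0$, i.e.\ $x_Y+x_Z=c_Y+c_Z$: from $x_Y=0$ we get the single point $(0,\,c_Y+c_Z)$ and from $x_Z=0$ the single point $(c_Y+c_Z,\,0)$, and these are the two boundary steady-states. Since $g$ is a nonzero binary form of degree $n-2$, it factors over $\CC$ into $n-2$ linear forms $\beta_\ell x_Y-\alpha_\ell x_Z$; each of the corresponding $n-2$ lines through the origin meets the affine line $f_1=0$ in exactly one point, yielding $n-2$ additional solutions. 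Adding up gives $2+(n-2)=n$ solutions, exactly two of which lie on a coordinate hyperplane.

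The only step that requires care — and the main, if modest, obstacle — is verifying that the generic-parameter hypothesis excludes all coincidences: for $(\mathbf{k},\mathbf{c})$ outside a proper algebraic subset, the $n-2$ roots of $g$ are distinct, none of the lines $\beta_\ell x_Y=\alpha_\ell x_Z$ is parallel to $f_1=0$ (so each genuinely meets it in an affine point) or a coordinate axis (so the associated solution has both coordinates nonzero), and none of the resulting points coincides with $(0,c_Y+c_Z)$ or $(c_Y+c_Z,0)$. Each of these is a codimension-one condition on the parameters, and on their complement all $n$ points are simple and distinct, so the steady-state degree is exactly $n$. This also dovetails with the rest of the section: the $n-2$ non-boundary solutions are precisely the solutions in $(\CC^*)^2$ enumerated by the mixed volume of Proposition~\ref{prop:cellDeathMV}, and together with the two boundary points they account for the B\'ezout bound of Proposition~\ref{prop:cellDeathBezout}.
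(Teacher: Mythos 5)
Your proof is correct and takes essentially the same route as the paper's: factor $x_Yx_Z$ out of the steady-state polynomial and use the linear conservation equation to reduce the count to one dimension, producing the two boundary solutions and $n-2$ solutions in $(\CC^*)^2$, for $n$ in total. The only difference is presentational — you decompose $g$ into linear factors and intersect each line with the conservation line, and you spell out the genericity conditions (simple roots of $g$, no factor equal to a coordinate axis or parallel to the conservation line) that the paper leaves implicit.
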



\begin{proof}
Based on the discussion following (\ref{eq:cellDeathGeneral}), we wish to solve a square
polynomial system in two variables with one linear equation and one
equation of degree $n.$ This is easily done with elimination.
Using the linear conservation equation, we can
express one of the indeterminates, say $x_Z,$ in terms of $x_Y,$ that
is $x_Z = c_Y-c_Z-x_Y.$ Observe that we can factor out $x_Yx_Z$ in
$\dot{x}_Y,$ and substitute the expression for $x_Z.$ 
This results in two boundary solutions of the
form $(x_Z,y_Z) = (0, c_Y-c_Z), (c_Y-c_Z,0)$, and $n-2$ complex
solutions in $(\CC^*)^2.$
Hence, there are $n$ steady-states, including the boundary steady-states. 
\end{proof}

\begin{rmk}
  \label{rmk:cellDeath}
Based on Proposition \ref{prop:cellDeathSSD}, there are more steady-states than the mixed volume
predicts. This is not contradictory, since the mixed volume gives a
bound on the solutions in the torus $(\CC^*)^n,$ while the
steady-state degree counts all solutions of the polynomial
system. When there are boundary steady-states, i.e., solutions with
some zero entries, the steady-state degree
may be larger than the mixed volume.
\end{rmk}

\subsection{Edelstein model}
\label{subsec:Edelstein}
The Edelstein model was proposed by B. Edelstein in 1970 \cite{E}. It is known to exhibit multiple
real, positive steady states \cite{M-P-V} and thus is an example of a multistationary network. We study the behavior of the steady-state
degree of the network after gluing $n$ copies of the Edelstein
model over shared complexes (see \cite{G-H-M-S} for more details on
gluing); we denote the new network $E_n$.
This model is of particular interest, because although the B\'ezout
bound is exponential in the number of species, the mixed volume
bound is constant and is achieved for all $n.$ To construct $E_n$, we start with the Edelstein model $E_1$ itself: $\{A
~\substack{\leftarrow\\[-1em]\rightarrow} ~ 2A, A+B
~\substack{\leftarrow\\[-1em]\rightarrow} ~ B_1
~\substack{\leftarrow\\[-1em]\rightarrow} ~ B\}$. Then beginning at $i=2$ and continuing until $i=n$, each step is
defined by adding one new species $B_i$ and four reactions gluing over
the complexes $A+B$ and $B.$ For instance, for $n=2,$ the network
$E_2$ would have the form: $\{A
~\substack{\leftarrow\\[-1em]\rightarrow} ~ 2A, A+B
~\substack{\leftarrow\\[-1em]\rightarrow} ~ B_1
~\substack{\leftarrow\\[-1em]\rightarrow} ~ B, A+B
~\substack{\leftarrow\\[-1em]\rightarrow} ~ B_2
~\substack{\leftarrow\\[-1em]\rightarrow} ~ B\}.$
In general, the $n$th reaction network in this
family has $n+2$ species, $n+4$ complexes, and $4n+2$
reactions. The corresponding polynomial system consists of one
conservation equation and $n+2$ differential
equations:
\begin{equation}
  \begin{split}
    \label{eq:EdelGeneral}
f_1 & = x_B -c_B+\sum_{i=1}^n(x_{B_i}-c_{B_i})\\
f_2 & =
  -k_{10}x_A^2-(k_{23}+k_{25}+\cdots+k_{2,n+3})x_Ax_B+k_{01}x_A+k_{32}x_{B_1}\\
&~~~~ +k_{52}x_{B_2}+\cdots+k_{n+3,2}x_{B_n}\\
f_3 & =
            -(k_{23}+k_{25}+\cdots+k_{2,n+3})x_Ax_B-(k_{43}+k_{45}
+\cdots +k_{4,n+3})x_B\\
&~~~~+(k_{32}+k_{34})x_{B_1}+\cdots+(k_{n+3,2}+k_{n+3,4})x_{B_n}\\
f_4 & = k_{23}x_Ax_B+k_{43}x_B-(k_{32}+k_{34})x_{B_1}\\
&~~\vdots\\
f_{n+3} & =
k_{2,n+3}x_Ax_B+k_{4,n+3}x_B-(k_{n+3,2}+k_{n+3,4})x_{B_n}.
\end{split}
\end{equation} 
Observe that only $n+1$ of the differential equations are needed to define the steady-state system as
there is a linear dependence between
$f_3,\ldots,f_{n+3},$ namely $f_3 = -\sum_{i=4}^{n+3}f_i.$
Despite the exponential B\'ezout
bound shown in Proposition \ref{prop:EdelBezout},
it turns out that the mixed volume of the polynomial system (\ref{eq:EdelGeneral})
is constant and it is achieved as the steady-state degree. 

\begin{prop}
  \label{prop:EdelBezout}
The chemical reaction network $E_n$ has a B\'ezout bound of $2^{n+1}.$
\end{prop}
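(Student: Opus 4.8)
The plan is to read the B\'ezout bound directly off the shape of the steady-state system displayed in~(\ref{eq:EdelGeneral}). First I would pin down the square system whose solutions are the steady states: it consists of the conservation equation $f_1$ together with a maximal linearly independent subset of the steady-state equations $f_2,\ldots,f_{n+3}$, and it lives in the $n+2$ variables $x_A,x_B,x_{B_1},\ldots,x_{B_n}$. Since $f_3 = -\sum_{i=4}^{n+3} f_i$ is the only relation among $f_3,\ldots,f_{n+3}$ and $f_2$ is independent of the rest, one obtains a square system of $n+2$ equations in $n+2$ unknowns by discarding $f_3$ and keeping $f_1$ together with $f_2, f_4, f_5,\ldots,f_{n+3}$.

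Next I would compute the total degree of each of these $n+2$ polynomials. The conservation equation $f_1$ is linear. The polynomial $f_2$ contains the monomials $x_A^2$ and $x_Ax_B$ and no monomial of higher degree, so $\deg f_2 = 2$. For each $i\in\{4,\ldots,n+3\}$, the polynomial $f_i = k_{2,i}x_Ax_B + k_{4,i}x_B - (k_{i,2}+k_{i,4})x_{B_{i-3}}$ has $x_Ax_B$ as its unique degree-two term, so $\deg f_i = 2$ as well. Multiplying the degrees of the chosen generators then gives the B\'ezout bound $1\cdot 2^{n+1} = 2^{n+1}$.

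The argument is a bookkeeping exercise rather than a genuine obstacle, but two small points deserve care. The first is verifying that exactly $n+1$ of the $n+2$ differential equations are needed, i.e. that $f_3 = -\sum_{i\geq 4} f_i$ accounts for the entire rank deficiency; this follows from the structure of $E_n$ and the fact that the stoichiometric subspace has corank one. The second is making sure each of the $n+1$ nonlinear generators is honestly quadratic and does not drop in degree for the chosen rate constants: this holds because the relevant leading coefficients ($k_{10}$ in $f_2$ and $k_{2,i}$ in $f_i$) are positive, hence nonzero. Finally, the choice of which dependent equation to drop is immaterial — $f_3$ is itself quadratic (it also contains $x_Ax_B$) — so every maximal independent subset of $f_2,\ldots,f_{n+3}$ contributes the same product $2^{n+1}$.
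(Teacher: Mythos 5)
Your proposal is correct and follows essentially the same route as the paper: drop the dependent equation $f_3$ (using $f_3=-\sum_{i=4}^{n+3}f_i$), observe that the remaining system consists of one linear equation and $n+1$ quadratics, and multiply degrees to get $1\cdot 2^{n+1}$. The extra care you take (checking that leading coefficients such as $k_{10}$ and the $k_{2,j}$ do not vanish, so no degree drops) is a harmless refinement, and the only blemish is a trivial mismatch in your rate-constant indexing for $f_4,\ldots,f_{n+3}$, which does not affect the degree count.
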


\begin{proof}
There are $n+3$ equations
in the system, where one equation is linear and the rest $n+2$ are
quadratic. Since $f_3 = -\sum_{i=4}^{n+1}f_i,$ we
can drop the polynomial $f_3$ and we are left with $n+1$ quadratic
equations. This gives us a B\'ezout bound of $2^{n+1}.$
\end{proof}

\begin{thm}
  \label{thm:EdelMV}
The mixed volume of the polynomial system corresponding to $E_n$
is 3. 
\end{thm}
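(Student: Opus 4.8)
The plan is to reduce the $(n+2)$-dimensional mixed volume to the mixed volume of two polygons in $\RR^2$. First, replace the steady-state system by a square one: since $f_3=-\sum_{i=4}^{n+3}f_i$, the system is generated by the $n+2$ polynomials $f_1,f_2,f_4,\dots,f_{n+3}$ in the $n+2$ variables $x_A,x_B,x_{B_1},\dots,x_{B_n}$. Writing $e_A,e_B,e_{B_1},\dots,e_{B_n}$ for the standard basis of $\RR^{n+2}$ and reading the supports off (\ref{eq:EdelGeneral}), the Newton polytopes are
\[
N_1=\conv(\bzero,e_B,e_{B_1},\dots,e_{B_n}),\qquad N_2=\conv(e_A,2e_A,e_A+e_B,e_{B_1},\dots,e_{B_n}),
\]
and $T_i=\conv(e_A+e_B,\,e_B,\,e_{B_i})$ for $i=1,\dots,n$; here I use that the $c_B,c_{B_i}$ and the rate constants are generic, so that $\bzero\in\supp(f_1)$ and every displayed monomial really occurs.

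Next I would apply Chen's Corollary~\ref{ChenCor1} with the grouping $\{f_1\},\{f_2\},\{f_4,\dots,f_{n+3}\}$. The only nontrivial hypothesis concerns the third group: every positive-dimensional face of the simplex $\wt Q:=\conv(e_A+e_B,e_B,e_{B_1},\dots,e_{B_n})$ that meets some $\supp(f_{i+3})=\{e_A+e_B,e_B,e_{B_i}\}$ in at least two points must also meet every $\supp(f_{j+3})$; this holds because such a face must contain $e_A+e_B$ or $e_B$, and both of these lie in every $\supp(f_{j+3})$. Hence $\MV(N_1,N_2,T_1,\dots,T_n)=\MV(N_1,N_2,\wt Q,\dots,\wt Q)$ with $\wt Q$ appearing $n$ times. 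By Bernstein's theorem this equals the number of solutions in $(\CC^*)^{n+2}$ of a generic system $g_1,g_2,h_1,\dots,h_n$ with $\newt g_1=N_1$, $\newt g_2=N_2$, $\newt h_i=\wt Q$. The equations $h_1=\dots=h_n=0$ are linear in $x_{B_1},\dots,x_{B_n}$ with a generic, hence invertible, coefficient matrix, so they solve uniquely for $x_{B_j}=x_B(\alpha_jx_A+\beta_j)$ with generic affine forms $\alpha_jx_A+\beta_j$. Substituting into $g_1$ and $g_2$ produces $\tilde g_1,\tilde g_2\in\CC[x_A,x_B]$ with Newton polytopes
\[
P=\conv\big((0,0),(0,1),(1,1)\big),\qquad Q=\conv\big((1,0),(2,0),(1,1),(0,1)\big),
\]
and, since the coefficients of $g_1$ and $g_2$ are free parameters untouched by the substitution, with generic coefficients for these supports; because $x_B\neq0$ on the torus and the forms $\alpha_jx_A+\beta_j$ are generic, this elimination is a bijection between the torus solutions of the full system and those of $\tilde g_1=\tilde g_2=0$, so $\MV(N_1,N_2,\wt Q,\dots,\wt Q)=\MV(P,Q)$.

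Finally I would compute the planar mixed volume: $\MV(P,Q)=\vol_2(P+Q)-\vol_2(P)-\vol_2(Q)$, with $\vol_2(P)=\tfrac12$, $\vol_2(Q)=1$, and $P+Q$ the hexagon with vertices $(3,1),(2,2),(0,2),(0,1),(1,0),(2,0)$, of area $\tfrac92$; thus $\MV(P,Q)=\tfrac92-\tfrac12-1=3$. I expect the delicate point to be the elimination step: one has to be sure that substituting $x_{B_1},\dots,x_{B_n}$ neither degenerates the Newton polytopes $P$ and $Q$ nor changes the torus solution count, i.e.\ that the reduced planar system is still generic for its supports. Routing the argument through Chen's corollary first, so that the polytope list is semi-mixed (the repeated simplex $\wt Q$) before any elimination is attempted, is what makes this bookkeeping manageable.
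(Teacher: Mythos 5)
Your proposal is correct, and after the first step it takes a genuinely different route from the paper. Both arguments begin the same way: drop $f_3$, read off the supports, and apply Chen's Corollary~\ref{ChenCor1} (your verification that any positive-dimensional face of the simplex $\wt{Q}$ meeting some $\supp(f_{i+3})$ in two points must contain $e_{AB}$ or $e_B$ is exactly the needed hypothesis), reducing to the semi-mixed volume $\MV(N_1,N_2,\wt{Q},\ldots,\wt{Q})$. From there the paper stays purely polytopal: Lemma~\ref{lemma:EdelMV} checks the hypotheses of Chen's Theorem~\ref{ChenThm2} so that the semi-mixed volume equals $(n+2)!\vol_{n+2}(\mcQ)$ for the union polytope $\mcQ$, whose normalized volume is computed to be $3$ by triangulating the trapezoid $\conv(e_0,2e_A,e_{AB},e_B)$ into three unimodular simplices and coning over the $e_{B_i}$. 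You instead invoke Bernstein's theorem: the $n$ generic polynomials with Newton polytope $\wt{Q}$ are linear in $x_{B_1},\ldots,x_{B_n}$, so eliminating them reduces the torus count to a bivariate system with Newton polygons $P$ and $Q$, and $\MV(P,Q)=\vol_2(P+Q)-\vol_2(P)-\vol_2(Q)=\tfrac92-\tfrac12-1=3$ (your hexagon and areas are right, and this matches the paper's normalization of mixed volume). The cost of your route is precisely the genericity bookkeeping you flag: you need (a) that the reduced coefficients are generic for the supports $P,Q$, which follows from your observation that the free coefficients of $g_1,g_2$ map onto them for fixed generic $\alpha_j,\beta_j$, and (b) that generically no torus solution of $\tilde g_1=\tilde g_2=0$ satisfies $\alpha_j x_A+\beta_j=0$, so the elimination really is a bijection on torus solutions; point (b) deserves one explicit sentence (substituting $x_A=-\beta_j/\alpha_j$ makes $\tilde g_1$ affine in $x_B$ with a single root at which $\tilde g_2$ does not vanish for generic coefficients). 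What each approach buys: the paper's argument is a uniform combinatorial certificate that also rehearses the triangulation technology reused for $PC_n$, while yours replaces Chen's Theorem~\ref{ChenThm2} and the triangulation by a two-variable Minkowski-sum computation that makes the independence of the answer from $n$ transparent.
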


\begin{exm}
  \label{example:edelstein1}
Before we give a proof to Theorem \ref{thm:EdelMV} we give
details for $n=1.$ The polynomial system for $E_1$ is
\begin{equation}
  \begin{split}
    \label{eq:E1}
    f_1 & = x_B+x_{B_1}-c_B-c_{B_1}\\
    f_2 = \dot{x}_A & = -k_{1,0}x_A^2+k_{0,1}x_A -
    k_{2,3}x_Ax_B+k_{3,2}x_{B_1}\\
    f_3 = \dot{x}_B & =
    -k_{2,3}x_Ax_B+k_{3,2}x_{B_1}+k_{3,4}x_{B_1}-k_{4,3}x_B\\
    f_4 = \dot{x}_{B_1} & = k_{2,3}x_Ax_B - k_{3,2}x_{B_1}-k_{3,4}x_{B_1}+k_{4,3}x_B.
  \end{split}
\end{equation}
Let $S_i$ be the support of $f_i, i=1,\ldots,4,$
and $Q_i=\conv(S_i),$ where $f_4 = -f_3,$ so we consider only $f_3.$
For ease of notation we write $101$ for $(1,0,1).$
Then, the supports of the three polynomials are
\begin{equation}
  \begin{split}
    \label{supp:E1}
  S_1 & = \{000,010,001\}\\
  S_2 & = \{200,110,100,001 \}\\
  S_3 & = \{110,010,001 \}.
  \end{split}
\end{equation}
Let $S=S_1\cup S_2\cup S_3,$ and $Q = \conv(S),$ see
Figure \ref{fig:triangEdel3D}. We
will show that the collection of sets in (\ref{supp:E1}) satisfies the
hypothesis of \cite[Theorem 2]{C}.
Let $F$ be a facet of $Q,$ which is a pyramid with a trapezoidal
base. If $F$ is one of the lateral facets,
then $F$ contains $001$ which is a member of each set $S_i, i=1,2,3.$
If $F$ is the base of the pyramid, then $F\cap S_i\neq\emptyset,
i=1,2,3,$ since $F$ contains at least two elements from each set
$S_i.$ If $F$ is an edge containing $001,$ then $F\cap S_i\neq\emptyset,
i=1,2,3.$ The edges containing $001$ are the lateral edges. We now
consider the four edges of the base of $Q.$ In the case when $F =
\conv(110,010),$ we have that $F\cap S_i\neq\emptyset$ for all $i.$
Otherwise, when $F$ is one of the other three edges, condition (B)
of \cite[Theorem 2]{C} is satisfied, since for at least one
$i=1,2,3, F\cap S_i$ is a singleton. Hence, each face of $Q$
satisfies either condition (A) or (B) of \cite[Theorem 2]{C} and
therefore the mixed volume of the system in (\ref{eq:E1}) is the same
as the normalized volume of the convex hull of the union of the Newton
polytopes of the corresponding
system. That is
\begin{equation}
  \label{mv:E1}
\MV(Q_1,Q_2,Q_3) = 3!\vol_3(Q).
\end{equation}
The Euclidean volume of $Q$ is the number of simplices contained in a
unimodular regular triangulation of $Q,$ times
the normalized volume of a
unimodular three-dimensional
simplex, which is $1/3!.$
To see the triangulation, first we note that $Q$ is a pyramid with a
trapezoidal base.
This base has a unimodular triangulation
containing three simplices, see Figure \ref{fig:triangEdelBase}.
To construct $Q$ we simply add the vertex $001$ and
cone over the existing simplices, see Figure
\ref{fig:triangEdel3D}. Hence, there are 3 simplices,
each with volume $1/3!.$ By (\ref{mv:E1}) we have that
\begin{equation}
\MV(Q_1,Q_2,Q_3) = 3!\cdot 3\cdot\frac{1}{3!}=3. \qedhere
\end{equation}
\begin{figure}
  \centering
  \includegraphics[scale=0.3]{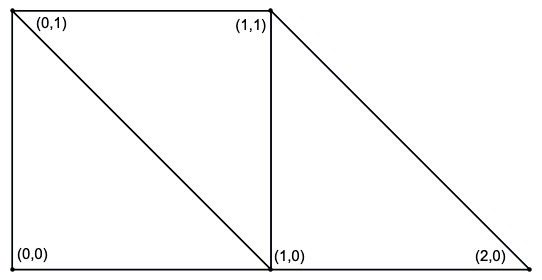}
  \caption{Unimodular triangulation of the trapezoidal base of $Q$ in
    Example \ref{example:edelstein1}.}
    \label{fig:triangEdelBase}
\end{figure}
\begin{figure}
  \centering
  \includegraphics[scale=0.3]{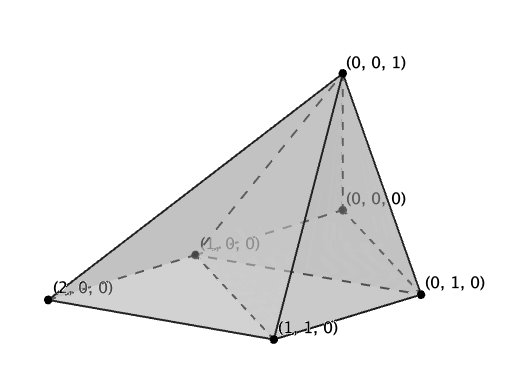}
  \caption{The polytope $Q=\conv(S)$ from Example
    \ref{example:edelstein1} and its unimodular triangulation.}
    \label{fig:triangEdel3D}
\end{figure}
\end{exm}

To prove Theorem \ref{thm:EdelMV}, we use results from \cite{C}
to compute the
mixed volume of the polynomial system in
(\ref{eq:EdelGeneral}). Consider (\ref{eq:EdelGeneral}) and let
$S_i=\newt(f_i).$ We relabel the polynomials $f_i$ by omitting $f_3$
and letting $f_j=f_{i-1}$ for $i\geq4.$ Let $Q_i=\conv(S_i),
i\in\{1,\ldots,n+2\},$ and for $n\geq 2$ let $\wt{S}=S_3\cup\cdots\cup
S_{n+2}, \wt{Q} = \conv(\wt{S}),$ and $\mcQ = \conv(S_{\cup}),$ where
$S_{\cup} = \bigcup_{i=1}^{n+2}S_i.$
\begin{lem}
  \label{lemma:EdelMV}
Let $n\geq 2$ and consider the chemical reaction network $E_n$ and the
corresponding polynomial system (\ref{eq:EdelGeneral}).  
Then
\begin{equation}
  \label{eq:EdelLemma}
  \MV(Q_1,\ldots,Q_{n+2})
  = (n+2)!\vol_{n+2}(\mcQ).
\end{equation}
\end{lem}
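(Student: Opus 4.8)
The plan is to establish \eqref{eq:EdelLemma} by Chen's results from \cite{C}, following the template of Example~\ref{example:edelstein1} but collecting the $n$ ``gluing'' supports $S_3,\dots,S_{n+2}$ into a single group so that Corollary~\ref{ChenCor1} applies. First I would write out the (relabeled) supports explicitly: in the coordinates indexed by the species $A,B,B_1,\dots,B_n$ (so in $\ZZ^{n+2}$, with $e_A,e_B,e_{B_i}$ the standard basis vectors) one has $S_1=\{\bzero,e_B,e_{B_1},\dots,e_{B_n}\}$, $S_2=\{2e_A,\,e_A+e_B,\,e_A,\,e_{B_1},\dots,e_{B_n}\}$, and $S_{i+2}=\{e_A+e_B,\,e_B,\,e_{B_i}\}$ for $i=1,\dots,n$; hence $S_\cup=\{\bzero,e_A,2e_A,e_B,e_A+e_B,e_{B_1},\dots,e_{B_n}\}$. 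Since $e_A=\tfrac12\bzero+\tfrac12(2e_A)$ lies in the relative interior of an edge, the vertex set of $\mcQ$ is $\{\bzero,2e_A,e_B,e_A+e_B,e_{B_1},\dots,e_{B_n}\}$; I would also record that $\wt Q=\conv\{e_A+e_B,e_B,e_{B_1},\dots,e_{B_n}\}$ is an $(n+1)$-simplex (its $n+2$ vertices are affinely independent), so its faces are exactly the convex hulls of subsets of its vertices.

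Second, I would invoke Corollary~\ref{ChenCor1} with the partition $k_1=k_2=1$, $k_3=n$: set $S_{1,1}=S_1$, $S_{2,1}=S_2$, $S_{3,j}=S_{j+2}$, so that $\wt Q_3=\wt Q$. For the two singleton groups the hypothesis is vacuous, and for the third it reads: every positive-dimensional face $F$ of the simplex $\wt Q$ meeting some $S_{j+2}=\{e_A+e_B,e_B,e_{B_j}\}$ in at least two points also meets all of $S_3,\dots,S_{n+2}$. If $F$ contains at least two vertices of $S_{j+2}$ then $F$ contains $e_A+e_B$ or $e_B$, and each of these points lies in every $S_{\ell+2}$; so the hypothesis holds and Corollary~\ref{ChenCor1} gives
\[
\MV(Q_1,\dots,Q_{n+2})=\MV\bigl(Q_1,\,Q_2,\,\underbrace{\wt Q,\dots,\wt Q}_{n}\bigr).
\]

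Third, I would apply Theorem~\ref{ChenThm2} to the $(n+2)$-tuple of supports $(S_1,S_2,\wt S,\dots,\wt S)$, whose union is again $S_\cup$ and whose convex hull is $\mcQ$. If a positive-dimensional face $F$ of $\mcQ$ meets each of $S_1,S_2,\wt S$, condition~(i) holds. Otherwise $F$ avoids exactly one of them (it cannot avoid two, since any pair of the complementary vertex sets meets in at most one point): using that every point of $S_1$, of $S_2\setminus\{e_A\}$, and of $\wt S$ is a vertex of $\mcQ$, and that $e_A\in F$ would force $2e_A\in F$, one checks that avoiding $\wt S$ forces $F=\conv(\bzero,2e_A)$ with $F\cap S_1=\{\bzero\}$; avoiding $S_1$ forces $F=\conv(2e_A,e_A+e_B)$ with $F\cap \wt S=\{e_A+e_B\}$; and avoiding $S_2$ forces $F=\conv(\bzero,e_B)$ with $F\cap \wt S=\{e_B\}$. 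In every case some intersection is a singleton, so condition~(ii) holds. Theorem~\ref{ChenThm2} then yields $\MV(Q_1,Q_2,\wt Q,\dots,\wt Q)=(n+2)!\,\vol_{n+2}(\mcQ)$, which together with the previous display proves \eqref{eq:EdelLemma}.

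The routine part is the combinatorial bookkeeping; the one genuinely delicate point is getting the vertex set of $\mcQ$ right — in particular recognizing that $e_A$, the exponent of the term $k_{01}x_A$ in $f_2$, is \emph{not} a vertex but an interior point of the edge $\conv(\bzero,2e_A)$ — since the face analysis in the third step hinges on knowing exactly which support points are vertices. Everything else reduces to the observation that $\wt Q$ is a simplex (making the Corollary~\ref{ChenCor1} hypothesis immediate) and to isolating the three small edges on which condition~(i) of Theorem~\ref{ChenThm2} fails.
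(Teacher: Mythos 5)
Your proof is correct and follows essentially the same route as the paper's: first Chen's Corollary~\ref{ChenCor1} to reduce to the semi-mixed system $\MV(Q_1,Q_2,\wt{Q},\ldots,\wt{Q})$, then Chen's Theorem~\ref{ChenThm2} applied to $\mcQ$, hinging on the observation that $e_A$ is not a vertex and a short case analysis of the few edges where condition~(i) can fail. Your check of the Corollary~\ref{ChenCor1} hypothesis (via $\wt{Q}$ being a simplex) is a bit more explicit than the paper's one-line justification, but the argument is the same.
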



\begin{proof}
The mixed volume computation in this case can be reduced to a {\em semi-mixed
  volume} computation, where some of the
polytopes are identical. First, we want to show that
\begin{equation*}
\MV(Q_1,\ldots,Q_{n+2}) =
\MV(Q_1, Q_2, \underbrace{\wt{Q},\ldots,\wt{Q}}_{n}).
\end{equation*}
Let the
indeterminates of (\ref{eq:EdelGeneral}) be ordered lexicographically: $x_A,
x_B, x_{B_1}, \ldots, x_{B_n},$ and let $e_i$ be the corresponding
exponent vetor for each monomial.
We write $e_0$ for the zero vector
in $\RR^{n+2}$ and $e_{ij}$ for $e_i+e_j,$ where
$i,j\in\{A,B,B_1,\ldots,B_n\}.$
After the aforementioned relabeling 
the supports of the $f_i$s, $i\in\{1,\ldots,n+2\},$ in (\ref{eq:EdelGeneral}) are
\begin{align}
    S_1 & = \{ e_0,e_B,e_{B_1},\ldots,e_{B_n}\} \nonumber\\
    S_2 & = \{2e_A, e_{AB},e_A, e_{B_1}\ldots,e_{B_n}\} \nonumber\\
    S_3 & = \{e_{AB},e_B,e_{B_1}\} \label{newSupportEdel}\\
    & \vdots\nonumber\\
    S_{n+2} & = \{e_{AB},e_B,e_{B_n}\}.\nonumber
\end{align}
Observe that $S_3,\ldots, S_{n+2}$ differ by one element only, hence,
they meet the criterion in \cite[Corollary 1]{C} implying that 
\begin{equation}
  \label{eq:proofE1}
  \MV(Q_1,\ldots,Q_{n+2}) = \MV(Q_1,Q_2,\underbrace{\wt{Q},\ldots,\wt{Q}}_{n}).
\end{equation}
Now, the mixed volume of the system with support (\ref{newSupportEdel}) is the
same as the mixed volume of the system below.
\begin{equation}
  \begin{split}
    \wt{S}_1 & = \{ e_0, e_B,e_{B_1},\ldots,e_{B_n}\}\\
    \wt{S}_2 & = \{2e_A,e_A,e_{AB},e_{B_1},\ldots,e_{B_n}\}\\
    \wt{S}_j & = \{ e_{AB}, e_B,e_{B_1},\ldots,e_{B_n}\}, j=3,\ldots,n+2
    \label{equivSupportEdel}
  \end{split}
\end{equation}
We want to show that the collection of $\wt{S}_i, i\in \{1,\ldots,n+2\},$
satisfies the hypothesis of \cite[Theorem 2]{C}. Let $F$ be a
positive dimensional face of $\mcQ.$ If any of the vertices of $F$ are
in $S_{\cap}=\bigcap_{i=1}^{n+2}S_i$ then $F\cap \wt{S}_i\neq\emptyset$ for
all $i\in\{1,\ldots,n+2\}.$ In this case, $F$ satisfies Theorem \ref{ChenThm2}(i). Suppose that none of the vertices of $F$
are in $S_{\cap}.$ Then they must be in the set difference $D=S_{\cup}
\backslash S_{\cap}=\{e_0,e_B,2e_A,e_A,e_{AB} \}.$ Note that
$e_A\in D$ is in the interior of the edge $\{e_0,2e_A\},$ so it
is not a vertex. Suppose that the vertices of $F$ are all of
$D-\{e_A\}.$ Then $F\cap \wt{S}_i\neq\emptyset$ for all $i,$ and we
are in case (i) of Theorem \ref{ChenThm2}. If the vertices of $F$ are a smaller
subset of $D-\{e_A\},$ then $F$ must be an edge. There are four such
edges, and for each one of them, either $F\cap \wt{S}_i\neq\emptyset$
for all $i,$ or for some $j$ we have that $F\cap\wt{S}_j$ is a
singleton. In this case we meet condition (ii) of the theorem. Hence, we have that
\begin{equation}
  \label{EdelLemma}
\MV(Q_1,\ldots,Q_{n+2})=(n+2)!\vol_{n+2}(\mcQ). \qedhere
\end{equation}\end{proof}

\begin{proof}[Proof of Theorem \ref{thm:EdelMV}.]
To compute the volume of $\mcQ$ we construct a unimodular
triangulation. Recall that the Euclidean volume of an $n$-dimensional
unimodular simplex is $\frac{1}{n!}.$ The vertices of $\mcQ$ are
$\{e_0, 2e_A,e_{AB},e_B,e_{B_i}\}$ where $i=1,\ldots,n.$ Note
that all $e_{j}, j\in\{A,B,B_i\}$ are the $\{0,1\}$ unit vectors
in $\RR^{n+2},$ and that vectors $e_{B_i}$ are linearly independent. This
means that we can work with the polytope
$P=\conv(e_0,2e_A,e_{AB},e_B)\subset\RR^2.$
After constructing a unimodular triangulation of $P$ we cone over it
with each of the vertices $e_{B_i}, i\in\{1,\ldots,n\}.$ This process
preserves unimodularity.

As shown in Figure \ref{fig:triangEdelBase}, $P$ is a trapezoid
consisting of three unimodular simplices, each with area
$\frac{1}{2!}=\frac{1}{2}.$ In particular,
we have
\begin{equation}
  \begin{split}
    \sigma_1 & = \conv(00,10,01)\\
    \sigma_2 & = \conv(10,01,11)\\
    \sigma_3 & = \conv(10,20,11).
  \end{split}
\end{equation}
As we cone over the existing triangulation with each $e_{B_i},$ the number
of simplices remains the same; see Figure \ref{fig:triangEdel3D} for
example. Thus, $\mcQ$ has three
$(n+2)$-dimensional simplices, each with volume $\frac{1}{(n+2)!}.$
Hence, by Lemma \ref{lemma:EdelMV} it follows that
\begin{equation}
  \label{EdelMV}
\MV(Q_1,\ldots,Q_{n+2}) = (n+2)!\vol_{n+2}(\mcQ) =
(n+2)!\frac{3}{(n+2)!}=3. \qedhere
\end{equation}
\end{proof}
\begin{thm}
  \label{thm:EdelSSD}
The steady-state degree of the chemical reaction network $E_n$ is 3.
\end{thm}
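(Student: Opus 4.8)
\section*{Proof proposal for Theorem~\ref{thm:EdelSSD}}

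The plan is to reduce the steady-state system of $E_n$ to a single univariate equation, exactly as in the proof of Proposition~\ref{prop:cellDeathSSD}, and to read off from that equation both the count $3$ and the absence of boundary solutions; the mixed-volume computation of Theorem~\ref{thm:EdelMV} then serves as a consistency check (and, via the BKK bound, already gives the upper bound $3$ on the number of solutions in $(\CC^*)^{n+2}$). Concretely: after the relabeling of Lemma~\ref{lemma:EdelMV}, each steady-state equation $f_3=0,\dots,f_{n+2}=0$ has the form $k_{2,j}x_Ax_B + k_{4,j}x_B - (k_{j,2}+k_{j,4})x_{B_i}=0$, and since $k_{j,2}+k_{j,4}\neq 0$ this solves uniquely for $x_{B_i}=x_B\,\ell_i(x_A)$ with $\ell_i(x_A)=\alpha_i x_A+\beta_i$ affine-linear and $\alpha_i,\beta_i$ rational in the rate constants. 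Substituting into the conservation equation $f_1=0$ gives $x_B\,D(x_A)=c$, where $c:=c_B+\sum_i c_{B_i}$ and $D(x_A):=1+\sum_i\ell_i(x_A)$ is affine-linear with leading coefficient $\sum_i\alpha_i$; for $c\neq 0$ this forces $D(x_A)\neq 0$ and $x_B=c/D(x_A)$. Substituting $x_{B_i}=x_B\,\ell_i(x_A)$ and $x_B=c/D(x_A)$ into $f_2=0$ and clearing the denominator $D(x_A)$ produces the univariate polynomial $P(x_A)=D(x_A)\bigl(-k_{10}x_A^2+k_{01}x_A\bigr)+c\,g(x_A)$ with $\deg g\le 1$, hence $\deg P=3$ with top coefficient $-k_{10}\sum_i\alpha_i\neq 0$ for every choice of positive rate constants.

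With the elimination in place, the solutions of the steady-state system correspond bijectively to the roots $x_A$ of $P$ at which $D(x_A)\neq 0$ (the remaining coordinates are then determined). So it remains to check that, for generic $(\mathbf{k},\mathbf{c})$: $P$ has three distinct roots; none of them is a root of $D$; and at each root one has $x_A\neq 0$, $x_B\neq 0$, and $x_{B_i}\neq 0$ for all $i$. Here $x_B=c/D(x_A)$ is automatically nonzero since $c\neq 0$; $x_A$ is nonzero because $P(0)=c\,g(0)$ with $g(0)=\sum_i k_{j,2}\beta_i$ a nonzero polynomial in the $k$'s (indeed positive for positive rate constants); and $x_{B_i}=x_B\,\ell_i(x_A)$ vanishes only if $P$ happens to vanish at $-\beta_i/\alpha_i$, a codimension-one condition on the parameters. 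The remaining two conditions (distinct roots of $P$; $P$ and $D$ share no root) are again the non-vanishing of explicit polynomials in $(\mathbf{k},\mathbf{c})$, namely the discriminant of $P$ and the value $g$ takes at the root of $D$.

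To finish, one must check that the polynomials appearing above (the discriminant of $P$ and the "spurious root" polynomials) are not identically zero in the parameters; otherwise the corresponding open conditions would be empty. I would dispatch this by a single explicit instance: the $n=1$ system of Example~\ref{example:edelstein1}, whose elimination cubic has coefficients that are simple monomials in the rate constants, can be checked to have three distinct roots with all coordinates nonzero for, say, a random choice of positive rate constants; for $n\ge 2$ one may specialize all the "parallel" rate constants so that the $B_i$ behave identically, collapsing the computation to essentially the same cubic. Granting this, $P$ has exactly three roots for generic parameters, each yielding a solution in $(\CC^*)^{n+2}$, so the steady-state degree of $E_n$ is $3$, matching the mixed volume of Theorem~\ref{thm:EdelMV}. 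The only real difficulty is this last step — controlling the genericity (non-vanishing) conditions uniformly in $n$ — which is why I would argue it via a base case and a specialization rather than by a symbolic computation for general $n$.
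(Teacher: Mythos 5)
Your proposal is correct and follows essentially the same route as the paper's proof: solve the equations $f_4,\ldots,f_{n+3}$ for each $x_{B_i}$ as $x_B$ times an affine function of $x_A$, substitute into the conservation equation to express $x_B$ rationally in $x_A$, and substitute into $f_2$ to obtain a univariate cubic, giving steady-state degree $3$. The only difference is that you spell out the genericity conditions (nonvanishing leading coefficient, discriminant, absence of boundary and spurious roots) that the paper's proof treats implicitly, which is a welcome but not essentially different elaboration.
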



\begin{proof}
We use elimination to reduce the system to a univariate cubic
polynomial in $x_A;$ the elimination algorithm is easy to see
for$E_1.$ The corresponding system (\ref{eq:E1}) contains four
polynomials in three variables with
$f_4=-f_3$, so we can reduce the system to three
polynomials by forgetting $f_4.$ Using $f_1,$ we
solve for $x_{B_1}$ as a linear expression in $x_B.$ Subtracting $f_3$
from $f_2$ and substituting for $x_{B_1}$ in the difference, we can
solve for $x_B,$ and in turn for $x_{B_1},$ as a quadratic in $x_A.$
Lastly, substituting for all variables in terms of $x_A$ in $f_2$
results in a univariate cubic polynomial in $x_A.$ Hence, there are
exactly three equilibrium solutions to (\ref{eq:E1}). 

The polynomial system
for $n\geq 2$ has the general form of \ref{eq:EdelGeneral}.
Similarly to the first case,
using equations $f_4,\ldots,f_{n+3},$
for each $i=1,\ldots,n$ we can express $x_{B_i}$ as a bilinear
expression in 
 $x_A$ and $x_B.$ These 
expressions can then be substituted in $f_1,$ from where we can solve
for $x_B$ (and respectively all $x_{B_i}$)
as a rational expression in terms of $x_A,$ with a quadratic
numerator and a linear denominator in
$x_A.$ These operations are defined, since we assume that the collection of $k_{ij}$s is generic, and
hence, no linear combination is zero; moreover we assume that $x_A$ is
nonzero. 
Substituting the rational expressions for $x_B$ and $x_{B_i}$ into $f_2$
and clearing the denominators results in a univariate cubic polynomial in $x_A.$
Hence, there are three solutions to the system, i.e., the steady-state
degree is 3. This result along with Lemma \ref{lemma:EdelMV} shows
that the BKK bound is tight for all $n.$
\end{proof}

\subsection{One-site phosphorylation cycle}
\label{subsec:one-site}

The last family of networks we study is based on the one-site phosphorylation cycle, a mechanism that plays a role in the activation
and deactivation of proteins.
In particular, we look at the reaction network $PC_n$ obtained by gluing $n$ one-site distributive phosphorylation cycles over complexes. As an example, when two one-site distributive phosphorylation cycles are glued in this way, we obtain a two-site
phosphorylation cycle \cite{F-W}.

The one-site distributive phosphorylation cycle consists of six species, six complexes, and
six reactions: $\{S_0+E ~\substack{\leftarrow\\[-1em]\rightarrow} ~X_1
\rightarrow S_1+E, S_1+F ~\substack{\leftarrow\\[-1em]\rightarrow} ~Y_1
\rightarrow S_0+F\}.$ The second copy of the one-site phosphorylation cycle will have the
form $\{S_1+E ~\substack{\leftarrow\\[-1em]\rightarrow} ~X_2
\rightarrow S_2+E, S_2+F ~\substack{\leftarrow\\[-1em]\rightarrow} ~Y_2
\rightarrow S_1+F\}$
where all species with index $i$ are replaced by the same type of species
with index $i+1,$ e.g., $S_1$ is replaced by $S_2.$ We glue over the
common complexes $S_1+E$ and $S_1+F.$ For $n$ copies of the cycle, we have $3n+3$ species, $4n+2$ complexes, and $6n$
reactions.  The reaction network $PC_4$ is shown in Figure \ref{f:PCn}.
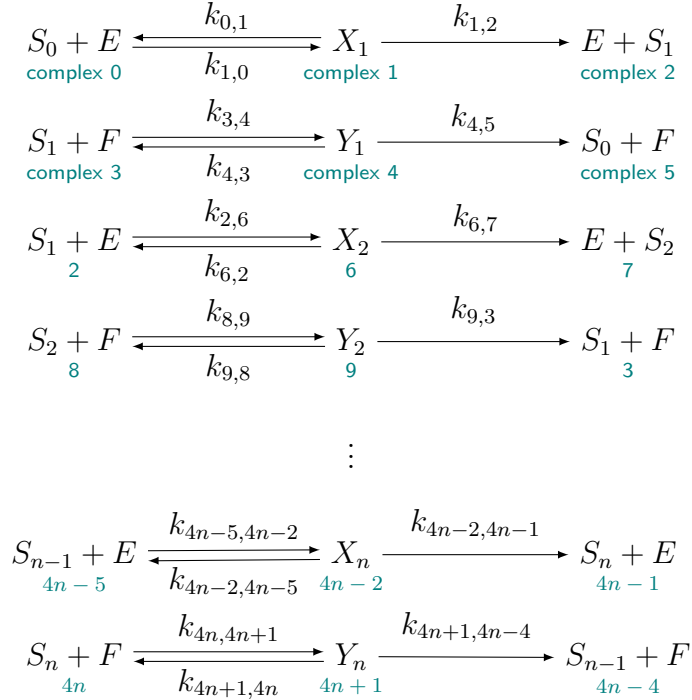
\begin{figure}
    \centering   
        \begin{tikzpicture}
           \matrix (m) [matrix of math nodes, row sep=0.7cm, column sep=2.3cm]
          {S_0+E  & X_1 & E+S_1 \\
            S_1+F & Y_1 & S_0+F \\
            S_1+E & X_2 & E+S_2 \\
            S_2+F & Y_2 & S_1+F \\
            & \vspace{-10em}\vdots & \\
            S_{n-1}+E & X_n & S_n+E \\
            S_n+F & Y_n & S_{n-1}+F \\};          
            \begin{scope}[font=\sffamily]
            \node (complex0) at (m-1-1) [label=below:\scriptsize{{\color{teal}complex 0}}] {};
            
            \node (complex1) at (m-1-2) [label=below:\scriptsize{{\color{teal}complex 1}}] {};
            
            \node (complex2) at (m-1-3) [label=below:\scriptsize{{\color{teal}complex 2}}] {};
            
            \node (complex3) at (m-2-1) [label=below:\scriptsize{{\color{teal}complex 3}}] {};
            
            \node (complex4) at (m-2-2) [label=below:\scriptsize{{\color{teal}complex 4}}] {};
            
            \node (complex5) at (m-2-3) [label=below:\scriptsize{{\color{teal}complex 5}}] {};
            
            \node (complex2) at (m-3-1) [label=below:\scriptsize{{\color{teal}2}}] {};
            
            \node (complex6) at (m-3-2) [label=below:\scriptsize{{\color{teal}6}}] {};
            
            \node (complex7) at (m-3-3) [label=below:\scriptsize{{\color{teal}7}}] {};
            
            \node (complex8) at (m-4-1) [label=below:\scriptsize{{\color{teal}8}}] {};
            
            \node (complex9) at (m-4-2) [label=below:\scriptsize{{\color{teal}9}}] {};
            
            \node (complex3) at (m-4-3) [label=below:\scriptsize{{\color{teal}3}}] {};
            
            \node (complex4n-5) at (m-6-1) [label=below:\scriptsize{{\color{teal}$4n-5$}}] {};
            
            \node (complex4n-2) at (m-6-2) [label=below:\scriptsize{{\color{teal}$4n-2$}}] {};
            
            \node (complex4n-1) at (m-6-3) [label=below:\scriptsize{{\color{teal}$4n-1$}}] {};
            
            \node (complex4n) at (m-7-1) [label=below:\scriptsize{{\color{teal}$4n$}}] {};
            
            \node (complex4n+1) at (m-7-2) [label=below:\scriptsize{{\color{teal}$4n+1$}}] {};
            
            \node (complex4n-4) at (m-7-3) [label=below:\scriptsize{{\color{teal}$4n-4$}}] {};
            \end{scope}
          \path (m-1-1.east) -- (m-1-1.south east) coordinate[pos=-0.2] (a1);
          \path (m-1-2.west) -- (m-1-2.south west) coordinate[pos=-0.2] (b1);
          \draw[latex-] (a1) -- node [below=+0.05] {$k_{1,0}$} (b1);
          \path (m-1-1.east) -- (m-1-1.south east) coordinate[pos=0.2] (a2);
          \path (m-1-2.west) -- (m-1-2.south west) coordinate[pos=0.2] (b2);
          \draw[latex-] (b2) -- node [above=+0.05] {$k_{0,1}$} (a2);
          \draw[latex-] (m-1-3) -- node [above=-0.05] {$k_{1,2}$} (m-1-2);
          
          \path (m-2-1.east) -- (m-2-1.north east) coordinate[pos=-0.2] (c1);
          \path (m-2-2.west) -- (m-2-2.north west) coordinate[pos=-0.2] (d1);
          \draw[latex-] (c1) -- node [below=-0.05] {$k_{4,3}$} (d1);
          \path (m-2-1.east) -- (m-2-1.north east) coordinate[pos=0.2] (c2);
          \path (m-2-2.west) -- (m-2-2.north west) coordinate[pos=0.2] (d2);
          \draw[latex-] (d2) -- node [above=-0.05] {$k_{3,4}$} (c2);
          \draw[latex-] (m-2-3) -- node [above=-0.05] {$k_{4,5}$} (m-2-2);
          
          \path (m-3-1.east) -- (m-3-1.north east) coordinate[pos=-0.2] (c1);
          \path (m-3-2.west) -- (m-3-2.north west) coordinate[pos=-0.2] (d1);
          \draw[latex-] (c1) -- node [below=-0.05] {$k_{6,2}$} (d1);
          \path (m-3-1.east) -- (m-3-1.north east) coordinate[pos=0.2] (c2);
          \path (m-3-2.west) -- (m-3-2.north west) coordinate[pos=0.2] (d2);
          \draw[latex-] (d2) -- node [above=-0.05] {$k_{2,6}$} (c2);
          \draw[latex-] (m-3-3) -- node [above=-0.05] {$k_{6,7}$} (m-3-2);
          
          \path (m-4-1.east) -- (m-4-1.north east) coordinate[pos=-0.2] (c1);
          \path (m-4-2.west) -- (m-4-2.north west) coordinate[pos=-0.2] (d1);
          \draw[latex-] (c1) -- node [below=-0.05] {$k_{9,8}$} (d1);
          \path (m-4-1.east) -- (m-4-1.north east) coordinate[pos=0.2] (c2);
          \path (m-4-2.west) -- (m-4-2.north west) coordinate[pos=0.2] (d2);
          \draw[latex-] (d2) -- node [above=-0.05] {$k_{8,9}$} (c2);
          \draw[latex-] (m-4-3) -- node [above=0.05] {$k_{9,3}$} (m-4-2);
          
          \path (m-6-1.east) -- (m-6-1.north east) coordinate[pos=-0.2] (c1);
          \path (m-6-2.west) -- (m-6-2.north west) coordinate[pos=-0.2] (d1);
          \draw[latex-] (c1) -- node [below=-0.05] {$k_{4n-2,4n-5}$} (d1);
          \path (m-6-1.east) -- (m-6-1.north east) coordinate[pos=0.2] (c2);
          \path (m-6-2.west) -- (m-6-2.north west) coordinate[pos=0.2] (d2);
          \draw[latex-] (d2) -- node [above=-0.05] {$k_{4n-5,4n-2}$} (c2);
          \draw[latex-] (m-6-3) -- node [above=0.05] {$k_{4n-2,4n-1}$} (m-6-2);
          
          \path (m-7-1.east) -- (m-7-1.north east) coordinate[pos=-0.2] (c1);
          \path (m-7-2.west) -- (m-7-2.north west) coordinate[pos=-0.2] (d1);
          \draw[latex-] (c1) -- node [below=-0.05] {$k_{4n+1,4n}$} (d1);
          \path (m-7-1.east) -- (m-7-1.north east) coordinate[pos=0.2] (c2);
          \path (m-7-2.west) -- (m-7-2.north west) coordinate[pos=0.2] (d2);
          \draw[latex-] (d2) -- node [above=-0.05] {$k_{4n,4n+1}$} (c2);
          \draw[latex-] (m-7-3) -- node [above=0.05] {$k_{4n+1,4n-4}$} (m-7-2);
        \end{tikzpicture}
    \caption{A chemical reaction network of type $PC_n$ with labels
      for complexes and notation convention for reaction constants. }
    \label{f:PCn} 
  \end{figure}
The corresponding polynomial system consists of three conservation equations and $3n + 1$ distinct differential equations up to sign.  The three conservation equations are
\begin{align}
    f_1 & = x_E-c_E+\sum_{i=1}^n(x_{X_i}-c_{X_i})  \nonumber\\
    f_2 & = x_F-c_F+\sum_{i=1}^n(x_{Y_i}-c_{Y_i})   \label{eq:one-siteConservation}\\ 
    f_3 & = \sum_{i=0}^n(x_{S_i}-c_{S_i})-(x_E-c_E)-(x_F-c_F), \nonumber
\end{align}
and the $3n+1$ distinct differential equations for $n\geq 2$ are
\begin{equation}
  \begin{split}
    \label{eq:one-siteODEs}
    f_4 & = \dot{x}_{S_0} = -k_{01}x_{S_0}x_E + k_{10}x_{X_1} +
    k_{45}x_{Y_1} \\
    f_5 & = \dot{x}_{S_1} = -k_{26}x_{S_1}x_E - k_{34}x_{S_1}x_F +
    k_{12}x_{X_1} + k_{43}x_{Y_1} + k_{62}x_{X_2} + k_{93}x_{Y_2} \\
    f_{j+4} & = \dot{x}_{S_j} = -k_{4j,4j+1}x_{S_j}x_F +
    k_{4j-2,4j-1}x_{X_j} + k_{4j+1,4j}x_{Y_j} -
    k_{4j-1,4j+2}x_{S_j}x_E \\
    & ~~~~~~~~~~~~ + k_{4j+2,4j-1}x_{X_{j+1}} +
    k_{4j+5,4j}x_{Y_{j+1}}, ~ j = 2,\ldots, n-1 \\
    f_{n+4} & = \dot{x}_{S_n} = -k_{4n,4n+1}x_{S_n}x_F +
    k_{4n-2,4n-1}x_{X_n} + k_{4n+1,4n}x_{Y_n} \\
    f_{n+5} & = \dot{x}_{X_1} = k_{01}x_{S_0}x_E -
    (k_{10}+k_{12})x_{X_1} \\
    f_{n+6} & = \dot{x}_{X_2} = k_{26}x_{S_1}x_E -
    (k_{62}+k_{67})x_{X_2} \\
    f_{n+j+4} & = \dot{x}_{X_j} = k_{4j-5,4j-2}x_{S_{j-1}}x_E -
    (k_{4j-2,4j-5}+k_{4j-2,4j-1})x_{X_j}, ~j = 3,\ldots,
    n \\
    f_{2n+5} & = \dot{x}_{Y_1} = k_{34}x_{S_1}x_F -
    (k_{43}+k_{45})x_{Y_1} \\
    f_{2n+6} & = \dot{x}_{Y_2} = k_{89}x_{S_2}x_F -
    (k_{93}+k_{98})x_{Y_2} \\
    f_{2n+j+4} & = \dot{x}_{Y_j} = k_{4j,4j+1}x_{S_j}x_F -
    (k_{4j+1,4(j-1)}+k_{4j+1,4j})x_{Y_j},  ~j = 3,\ldots, n.
  \end{split}
\end{equation}
The full list of steady-state equations includes
$\dot{x}_E = -\sum_i\dot{x}_{X_i}$ and $\dot{x}_F =
-\sum_i\dot{x}_{Y_i},$ which we disregard, since they are linear
combinations of other polynomials from the system. Let
$\wt{P}_n$ be the polynomial system for the reaction network $PC_n$
consisting of the $3n+4$
equations from (\ref{eq:one-siteConservation}) and
(\ref{eq:one-siteODEs}) set equal to zero. 

\begin{prop} \label{prop:BezoutPC} The B\'ezout bound for the reaction network $PC_n$ is $2^{3n+1}.$
\end{prop}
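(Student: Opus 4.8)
The plan is to mimic Propositions~\ref{prop:cellDeathBezout} and~\ref{prop:EdelBezout}: write down the equations of the steady-state system of $PC_n$, delete the ones that are forced to be linearly dependent, and take the product of the degrees of what remains. So the whole proof is essentially a careful count, with no geometry involved (the polyhedral machinery is needed only for the mixed volume, not for this proposition).

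First I would recall the structure of $\wt{P}_n$. The network $PC_n$ has $3n+3$ species, so $\wt{P}_n$ is a system in $3n+3$ unknowns, consisting of the three linear conservation equations~(\ref{eq:one-siteConservation}) together with the $3n+1$ differential equations~(\ref{eq:one-siteODEs}); the latter are already what survives of the full list of steady-state equations once $\dot{x}_E=-\sum_i\dot{x}_{X_i}$ and $\dot{x}_F=-\sum_i\dot{x}_{Y_i}$ have been discarded. By inspection, every equation in~(\ref{eq:one-siteODEs}) is a linear combination of the squarefree quadratic monomials $x_{S_\ell}x_E$, $x_{S_\ell}x_F$ and the linear monomials $x_{X_\ell}$, $x_{Y_\ell}$, hence has total degree exactly $2$, while each conservation equation has degree $1$. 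Taking the product of these degrees yields $1^3\cdot\underbrace{2\cdots 2}_{3n+1}=2^{3n+1}$, which is the claimed B\'ezout bound.

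The only thing that needs care is the bookkeeping: one must confirm that the number of unknowns is $3n+3$; that exactly $3n+1$ distinct differential equations remain after the two obvious dependences among $\dot{x}_E,\dot{x}_F$ and the $\dot{x}_{X_i},\dot{x}_{Y_i}$ are removed; and that each such equation really does have degree $2$. All of this is a direct reading of Figure~\ref{f:PCn} and of~(\ref{eq:one-siteConservation})--(\ref{eq:one-siteODEs}), so there is no real obstacle. (One could go further and note that the third conservation law of $PC_n$ forces one additional linear relation among the $3n+1$ quadratic equations, which would trim one more factor of $2$; the bound as stated is simply the product of the degrees of the $3n+4$ equations listed in $\wt{P}_n$, in the same spirit as the count in Proposition~\ref{prop:EdelBezout}.)
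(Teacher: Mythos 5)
Your proposal is correct and follows essentially the same argument as the paper: the three conservation equations are linear, the $3n+1$ remaining differential equations are quadratic, and the B\'ezout bound is the product $1^3\cdot 2^{3n+1}=2^{3n+1}$. Your parenthetical observation about a further linear dependence among the quadratics is a reasonable aside but is not part of (and does not affect) the stated bound, which the paper likewise takes as the product of degrees of the listed equations.
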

\begin{proof}
  Note that each of the $3n+1$ polynomial ODEs in (\ref{eq:one-siteODEs}) is
quadratic, and each of the three conservation equations in (\ref{eq:one-siteConservation}) is
linear. Hence, the B\'ezout bound for the system $\wt{P}_n$ is $2^{3n+1}.$
\end{proof}

Since $\wt{P}_n$ is overdetermined, to compute the mixed volume and compare it with the B\'ezout bound, we consider the
randomized system $P_n = M\cdot \wt{P}_n,$ where
$M\in\CC^{(3n+3)\times(3n+4)}$ is a generic matrix. Note that every solution of $\wt{P}_n$ is a solution of $P_n$, so the mixed volume of $P_n$ still provides an upper bound on the number solutions of $\wt{P_n}$ in $(\mathbb C^*)^n$. The system $P_n$
is a square system  with $3n +3$ equations where each polynomial is a linear combination of
the polynomials $f_i$, $i = 1,\ldots, 3n+4.$

For the remainder of this section we work with the system $P_n$
where each polynomial has support $S_n = \bigcup_{i=1}^{3n+4}S_i$
for $S_i=\supp(f_i), \ i=1,\ldots, 3n+4.$ Let $Q_n =
\conv(S_n)$ be the Newton polytope of each polynomial of $P_n.$ This
leads to the main theorem of this section.
\begin{thm}
  \label{thm:one-siteMV}
  Let $P_n$ be the randomized polynomial system for the reaction network $PC_n$. Then,
  \begin{equation}
    \label{eq:one-siteMV}
    \MV(\underbrace{Q_n,\ldots,Q_n}_{3n+3}) =
    (3n+3)!\vol_{3n+3}(Q_n) = \frac{(n+1)(n+4)}{2}-1.
  \end{equation}
\end{thm}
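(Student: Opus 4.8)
The first equality in (\ref{eq:one-siteMV}) is immediate and uses nothing from Chen's theorems: all $3n+3$ polynomials of the randomized system $P_n$ share the single Newton polytope $Q_n$, so $\vol_{3n+3}(\lambda_1 Q_n+\cdots+\lambda_{3n+3}Q_n)=(\lambda_1+\cdots+\lambda_{3n+3})^{3n+3}\vol_{3n+3}(Q_n)$, and the coefficient of $\lambda_1\cdots\lambda_{3n+3}$ in this expression is $(3n+3)!\,\vol_{3n+3}(Q_n)$. Thus the entire content of the theorem is the evaluation of the Euclidean volume of $Q_n$, and the plan is to exhibit a regular unimodular triangulation of $Q_n$ and count its maximal cells, each contributing $1/(3n+3)!$.

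First I would pin down $Q_n$ itself. Reading the supports off of (\ref{eq:one-siteConservation})--(\ref{eq:one-siteODEs}) and indexing the $3n+3$ coordinates by $x_E,x_F,x_{S_0},\dots,x_{S_n},x_{X_1},\dots,x_{X_n},x_{Y_1},\dots,x_{Y_n}$, the exponent vectors occurring in $S_n$ are the origin $\mathbf 0$, all $3n+3$ standard basis vectors $e_v$, and the bilinear vectors $e_{S_j}+e_E$ for $j=0,\dots,n-1$ and $e_{S_j}+e_F$ for $j=1,\dots,n$. Checking that none of these $1+(3n+3)+2n=5n+4$ points is a convex combination of the others shows they are exactly the vertices of the full-dimensional $0/1$-polytope $Q_n\subset\RR^{3n+3}$. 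Next I would establish its $\mathcal H$-representation: the $3n+3$ coordinate inequalities $x_v\ge 0$ are facet-defining (the hyperplane $x_v=0$ contains $\mathbf 0$ together with the affinely independent $e_w$, $w\ne v$), and there are exactly four further facets, ``packing'' inequalities forced by the bilinear monomials. The cleanest way to see the full list is to identify $Q_n$ with the matching polytope of the multigraph $G$ on vertices $p,q,r_0,\dots,r_n$ with edges $S_j=\{p,r_j\}$ for $0\le j\le n$, $E=\{q,r_n\}$, $F=\{q,r_0\}$, and $2n$ parallel edges $X_1,\dots,X_n,Y_1,\dots,Y_n$ joining $p$ and $q$; one checks directly that the matchings of $G$ are precisely the elements of $S_n$, and the four extra facets are then the degree inequalities at $p$ and $q$ together with the two Edmonds odd-set inequalities for $\{p,q,r_0\}$ and $\{p,q,r_n\}$, giving $3n+7$ facets in all. (If one prefers not to invoke this identification, the four inequalities can be written down by hand, verified to be valid and facet-defining, and shown to cut out exactly $Q_n$ by a $0/1$ vertex check.)

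With the facet description in hand I would construct an explicit regular unimodular triangulation of $Q_n$ -- for instance the pulling (placing) triangulation that pulls $\mathbf 0$ first, whose maximal cells are cones from $\mathbf 0$ over the cells of induced pulling triangulations of the four facets not containing $\mathbf 0$ -- and verify directly that each maximal simplex has normalized volume $1$; the $0/1$ structure makes this a finite check per cell type, and the matching-polytope form of $Q_n$ makes it transparent. It then remains to count the maximal simplices. The natural device here is the recursion coming from the gluing construction $PC_n=PC_{n-1}\cup(\text{one new cycle})$: passing from $Q_{n-1}$ to $Q_n$ adjoins the three coordinates $x_{S_n},x_{X_n},x_{Y_n}$ and, I expect, contributes exactly $n+2$ new maximal cells, so that $V_n:=(3n+3)!\,\vol_{3n+3}(Q_n)$ satisfies $V_n=V_{n-1}+(n+2)$; with the base value checked directly this telescopes to $V_n=\frac{(n+1)(n+4)}{2}-1$. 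Alternatively one enumerates the maximal cells of the pulling triangulation directly from the $\mathcal H$-description and sums the resulting quadratic count.

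The main obstacle is this last step: organizing an explicit unimodular triangulation in dimension $3n+3$ and rigorously establishing the cell count $\frac{(n+1)(n+4)}{2}-1$ (equivalently, proving the recursion $V_n=V_{n-1}+n+2$). Steps 1 and 2 are essentially careful bookkeeping, and once $Q_n$ is recognized as a matching polytope the construction and unimodularity of the triangulation become routine; the combinatorial enumeration behind the quadratic formula is the part that needs real care.
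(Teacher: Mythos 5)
Your outline reproduces the architecture of the paper's argument -- first equality from the definition of mixed volume for identical polytopes, vertex description of the $0/1$-polytope $Q_n$ ($5n+4$ vertices), an irredundant $\mathcal{H}$-representation with four non-coordinate facets, a placing/pulling unimodular triangulation, and a count satisfying $V_n=V_{n-1}+(n+2)$ -- and your four extra facets (the two degree inequalities at $p$ and $q$ and the two odd-set inequalities) are exactly the four multivariate inequalities of Proposition~\ref{l:one-siteH1}. Your route to them via the matching-polytope identification and Edmonds' description is a genuinely different, and arguably slicker, way to obtain the facets than the paper's direct vertex check (the paper proves $Q_n=P_{MA}(G_n)$ only afterward, in Proposition~\ref{matchPolyQn}, and does not use it in the proof of the theorem); if you take that route you must still verify which Edmonds inequalities are facet-defining for this multigraph and that the rest are redundant.

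However, there is a genuine gap at exactly the point you flag: the volume count is asserted, not proved. The statement ``I expect [each gluing step] contributes exactly $n+2$ new maximal cells'' is the entire quantitative content of the theorem, and nothing in your proposal establishes it. In the paper this is the work of Lemmas~\ref{l:HrepKns} and~\ref{l:one-siteT}: one first projects away the $2n$ coordinates $x_{X_j},x_{Y_j}$ (whose unit vectors only get coned over) to reduce to the $(n+3)$-dimensional polytope $K_n$, computes the $\mathcal{H}$-representations of the two intermediate polytopes $K_n^*$ and $\wt K_n$, and then, when placing $v_{d_n}$, $v_{2,d_{n-1}}$, $v_{4,d_n}$ in that order, determines precisely which facets are visible from each new vertex and how the non-simplex visible facet $F_{2,4,d_n}$ is triangulated by the already-built cells; this is what yields the counts $k_{n-1}$, then $+1$, then $+1+(d_n-3)$, i.e.\ the recursion $k_n=k_{n-1}+d_{n-1}$. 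Without that visibility analysis (or an equivalent explicit enumeration of the cells of your pulling triangulation of $Q_n$ from its $\mathcal{H}$-description), the formula $\frac{(n+1)(n+4)}{2}-1$ is unsupported. A secondary, fixable weakness: unimodularity is not ``transparent'' from the $0/1$ or matching-polytope structure -- pulling triangulations of $0/1$ polytopes need not be unimodular in general; the paper justifies it by noting the projected polytope is compressed and that a placing (pushing) triangulation becomes a pulling triangulation after reversing the order, and if you work with $Q_n$ directly you would need to verify $Q_n$ is $2$-level/compressed (which does hold here, since each of the four facet functionals takes only the values $0$ and $1$ on matchings) or argue unimodularity some other way.
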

The first equality of (\ref{eq:one-siteMV}) follows from the
definition of mixed volume in the special case when all polytopes are
identical. To prove the second equality we construct a triangulation $T_n$ of the
polytope $Q_n.$ 
Provided $T_n$ is {\em unimodular}, i.e., all simplices are unimodular, the normalized Euclidean volume of
$Q_n$ is the number of simplices in $T_n.$ First we
give a description of the vertices of $Q_n,$ followed by a
hyperplane representation of $Q_n,$ which aids in the construction of
the triangulation $T_n$ with the desired number of simplices. 
We illustrate Theorem \ref{thm:one-siteMV} with an example
for $n=1.$
\begin{exm}
  \label{example1}
  The reaction network for $n=1$ is $\{S_0+E ~\substack{\leftarrow\\[-1em]\rightarrow} ~X_1
\rightarrow S_1+E, S_1+F ~\substack{\leftarrow\\[-1em]\rightarrow} ~Y_1
\rightarrow S_0+F\},$ and the corresponding polynomial system $\wt{P}_1$ is
\begin{equation}
  \begin{split}
    f_1 & = x_E+x_{X_1}-c_E-c_{X_1} \\
    f_2 & = x_F+x_{Y_1}-c_F-c_{Y_1} \\
    f_3 & = x_{S_0}+x_{S_1}-x_E-x_F-c_{S_0}-c_{S_1}+c_{E}+c_{F} \\
    f_4 & = -k_{01}x_{S_0}x_E+k_{10}x_{X_1}+k_{45}x_{Y_1} \\
    f_5 & = -k_{34}x_{S_1}x_F+k_{12}x_{X_1}+k_{43}x_{Y_1} \\
    f_6 & = k_{01}x_{S_0}x_E-(k_{10}+k_{12})x_{X_1} \\
    f_7 & = k_{34}x_{S_1}x_F-(k_{43}+k_{45})x_{Y_1}.
  \end{split}
\end{equation}
We take generic parameters $k_{ij}$ and consider the randomized system
$P_1$ with six equations in six variables with the following order:
$x_{S_0},x_E,x_{X_1},x_{S_1},x_F,x_{Y_1}.$ Each polynomial in $P_1$
has the same support, namely
\[
  S_1 = \left\{
  \begin{pmatrix}
    0\\0\\0\\0\\0\\0 
  \end{pmatrix},
   \begin{pmatrix}
    1\\0\\0\\0\\0\\0 
  \end{pmatrix}, 
  \cdots,
   \begin{pmatrix}
    0\\0\\0\\0\\0\\1 
  \end{pmatrix},
   \begin{pmatrix}
    1\\1\\0\\0\\0\\0 
  \end{pmatrix},
  \begin{pmatrix}
    0\\0\\0\\1\\1\\0 
  \end{pmatrix}
  \right\} =\{e_0,e_1,\ldots,e_6,e_{12},e_{45}\}.
\]
For $Q_1=\conv(S_1)\subseteq\RR^6,$ the mixed volume for the system $P_1$
is
\begin{equation*}
   \MV(\underbrace{Q_1,\ldots,Q_1}_{6}) = 6!\vol_6(Q_1).
 \end{equation*}
Observe that $e_3$ and
$e_6$ are linearly independent from the rest of the vertices as vectors. In order
to simplify computations, we will project away $e_3$ and $e_6$ and relabel the vertices. We will
study the new polytope
$K_1=\conv(\vv_1)$ in $\RR^4$ where
\begin{equation*}
  \vv_1 = 
   \left\{
  \begin{pmatrix}
    0\\0\\0\\0
  \end{pmatrix},
   \begin{pmatrix}
    1\\0\\0\\0 
  \end{pmatrix}, 
  \cdots,
   \begin{pmatrix}
    0\\0\\0\\1 
  \end{pmatrix},
   \begin{pmatrix}
    1\\1\\0\\0 
  \end{pmatrix},
  \begin{pmatrix}
    0\\0\\1\\1 
  \end{pmatrix}
  \right\} =  \{v_0,v_1,\ldots,v_4,v_{12},v_{34}\}.
\end{equation*}
Then we will cone over the triangulation of $K_1$ with $e_3$ and then $e_6$ to recover $Q_1.$ 
To compute the
volume of $K_1$ we construct a placing triangulation $\mcT_1,$ which
is unimodular; see the proof of Lemma \ref{l:one-siteT} and \cite{dL, L-S} for
more details.

We begin the triangulation by placing the first five vertices
$v_0,\ldots,v_4,$ which form a standard simplex in $\RR^4.$ Let
$\sigma_1=\conv(v_0,\ldots,v_4).$ Next we place the vertex $v_{12}.$
Note that $v_{12}\not \in \sigma_1,$ but it is in the affine hull of
$\sigma_1$. We consider the facets of $\sigma_1$
visible from $v_{12},$ where the only such facet is $F_1 =
\conv(v_1,\ldots,v_4)$ since all other facets lie on the coordinate
hyperplanes.  
We cone over $F_1$ with $v_{12}$ and obtain the simplex
$\sigma_2=\conv(v_1,\ldots,v_4,v_{12})$. Lastly, we place $v_{34}$ and
observe that $v_{34}$ is not in the convex hull of $\{v_0, v_1,\ldots,v_4,v_{12} \}$ but it is in their
affine hull. None of the facets of $\sigma_1$ are
visible from $v_{34},$ but two of the facets of $\sigma_2$ are
visible: $F_{21} = \conv(v_1,v_3,v_4,v_{12})$ and $F_{22} =
\conv(v_2,v_3,v_4,v_{12}).$
We cone over each one with $v_{34}$ constructing two more simplices:
$\sigma_3 = \conv(F_{21}\cup\{v_{34}\})$
and $\sigma_4 = \conv(F_{22}\cup\{v_{34}\}).$ The collection
$\mcT_1=\bigcup_{i=1}^4\sigma_i$ is a triangulation of $K_1$ by
construction. Moreover, by a similar proof as the one for Lemma \ref{l:one-siteT}, $\mcT_1$ is a
unimodular triangulation.

To construct a triangulation of $Q_1$, we embed $K_1$ in $\mathbb R^6$ and then we cone over each $\sigma_i$ with
$e_3$ and then $e_6$. This gives $T_1 =
\bigcup_{i=1}^4s_i,$ where
\begin{align*}
  s_1 & = \conv(e_0,\ldots,e_6), \\
  s_2 & = \conv(e_1,\ldots,e_6,e_{12}), \\
  s_3 & = \conv(e_1,e_3,e_4,e_5,e_6,e_{12},e_{45}),\\
  s_4 & = \conv(e_2,e_3,e_4,e_5,e_6,e_{12},e_{45}).
\end{align*}
The triangulation $T_1$ remains unimodular, hence the normalized
Euclidean volume of each
simplex is $1/6!,$ and 
\begin{equation*}
  \MV(\underbrace{Q_1,\ldots,Q_1}_6) = 6!\vol_6(Q_1) =
  6!\cdot\frac{4}{6!} = 4 = \frac{(n+1)(n+4)}{2}-1. \qedhere
\end{equation*}
\end{exm}

Now lets consider the general case where the dimension of the ambient space of $Q_n$
is $3n+3$.
Let $e_i\in\RR^{3n+3}$
represent the $i$th standard unit vector, $e_0$ be the zero
vector, and $e_{ij}=e_i+e_j.$ For $1 \leq i \leq d$, the vector $e_i$ is the
exponent vector of $i$th indeterminate in the following ordered list
$(x_{S_0},x_E,x_{X_1},x_{S_1},x_F,x_{Y_1},x_{X_j},x_{S_j},x_{Y_j})_{j=2}^n$
of size $3n+3.$ For $n=1$ and
$n=2$ 
the vertices of $Q_1$ and $Q_2$ are given by the vector
configurations $e_0,e_1,\ldots,e_6,e_{12},e_{45}$ and
$e_0,e_1,\ldots,e_9,e_{12},e_{24},e_{45},e_{58},$ respectively. Going from the $(j-1)$-site phosphorylation network to the $j$-site phosphorylation network ($j \geq 2$), we gain three new steady-state equations and five new monomials:
$x_{X_j},x_{S_j},x_{Y_j},x_{S_{j-1}}x_E,x_{S_j}x_F.$ Hence, for $n\geq3$
the vertices of $Q_n$ are given by the $5n+4$ vectors of dimension $3n+3$
in the configuration
\begin{equation}
  \begin{split}
    \label{eq:vertices}
  V_n = \{e_0,e_1,\ldots,e_{3n+3},e_{12},&e_{24},e_{28},\ldots,
  e_{2,3n-7},e_{2,3n-4},\\
  & e_{45},e_{58},\ldots,
  e_{5,3n-1},e_{5,3n+2}\}.
  \end{split}
\end{equation}
\begin{prop}
  \label{l:one-siteH1}
  Let $Q_n$ be the Newton polytope of each polynomial in the
  system $P_n$ for $n\geq2.$ The $\mathcal{H}$-representation of
  $Q_n$ is given by
   \begin{equation}
    \begin{split}
      \label{eq:H1rep}
    1-x_1-x_3-x_4-\sum_{i=6}^{3n+3}x_i & \geq 0 \\
    1-x_1-x_3-x_5-\sum_{i=2}^{n}(x_{3i}+x_{3i+1})-x_{3n+3} & \geq 0 \\
    1-x_2-x_3-x_5-\sum_{i=2}^n(x_{3i}+x_{3i+1})-x_{3n+3} & \geq 0 \\
    1-x_2-x_3-\sum_{i=2}^n(x_{3i}+x_{3i+1})-x_{3n+2}-x_{3n+3} & \geq
    0 \\
    x_i & \geq 0, ~ i = 1,\ldots,3n+3.
    \end{split}
    \end{equation}
\end{prop}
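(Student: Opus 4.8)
The plan is to prove the two inclusions $Q_n\subseteq P$ and $P\subseteq Q_n$, where $P\subseteq\RR^{3n+3}$ denotes the polyhedron defined by the $3n+7$ inequalities in \eqref{eq:H1rep}. Since $Q_n=\conv(V_n)$ with $V_n$ as in \eqref{eq:vertices}, the inclusion $Q_n\subseteq P$ reduces to checking that every vertex in $V_n$ satisfies each inequality. The nonnegativity constraints are automatic because every element of $V_n$ is a $0$--$1$ vector. Each of the four remaining inequalities has the form $\ell(x)\le 1$ with coefficient vector in $\{0,1\}^{3n+3}$, so $\ell(e_0)=0$ and $\ell(e_i)\le 1$ for every unit vector; for an edge vertex $e_{ij}=e_i+e_j$ one has $\ell(e_{ij})=\ell_i+\ell_j$, so it remains only to observe --- reading off \eqref{eq:vertices} against \eqref{eq:H1rep} --- that for every pair $\{i,j\}$ occurring among the edge vertices $e_{12}$, $e_{2,\bullet}$, $e_{5,\bullet}$, no single inequality of \eqref{eq:H1rep} contains both $x_i$ and $x_j$. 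This is a finite bookkeeping check.

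For the reverse inclusion $P\subseteq Q_n$ I would first note that $P$ is bounded: every coordinate $x_i$ appears with coefficient $1$ in at least one upper inequality (e.g. all of $x_1,x_3,x_4,x_6,\dots,x_{3n+3}$ in the first, $x_2$ in the third, $x_5$ in the second), so $0\le x_i\le 1$ on $P$. Hence $P$ is a polytope, $P=\conv(\operatorname{vert}(P))$, and it suffices to show every vertex of $P$ lies in $V_n$. A vertex $v$ of $P\subseteq\RR^{3n+3}$ is cut out by $3n+3$ linearly independent tight constraints; if $v$ has exactly $k$ nonzero coordinates, then $3n+3-k$ of these are the vanishing of the other coordinates, so the remaining $k$ tight constraints are among the four upper inequalities, and their restrictions to the $k$-element support $\Sigma$ of $v$ must have rank $k$; in particular $k\le 4$. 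The cases $k=0,1,2$ are immediate: $k=0$ gives $e_0$; for $k=1$, a tight upper inequality containing $x_i$ forces $v=e_i$; for $k=2$ with support $\{i,j\}$, the $2\times 2$ restricted $\{0,1\}$-system has a strictly positive solution only when one tight inequality contains $x_i$ but not $x_j$ and the other contains $x_j$ but not $x_i$, forcing $v=e_i+e_j$, and feasibility of $e_i+e_j$ forces that no upper inequality contains both $x_i$ and $x_j$ --- which, compared with \eqref{eq:H1rep}, singles out precisely the edge vertices of \eqref{eq:vertices}.

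The main obstacle is ruling out vertices with $k\in\{3,4\}$. My plan is to exploit the strong overlap among the four upper functionals $A,B,C,D$ of \eqref{eq:H1rep}: as linear forms,
\[
B-C=x_1-x_2,\qquad C-D=x_5-x_{3n+2},\qquad A-B=x_4-x_5+\sum_{j=2}^{n}x_{3j+2},
\]
so the coefficient matrix has row space spanned by $\{B,\ x_1-x_2,\ x_5-x_{3n+2},\ A-B\}$, which has rank $4$. For a putative vertex with support $\Sigma$, $|\Sigma|=k\ge 3$, requiring $k$ of the four restricted rows to be independent on $\Sigma$ forces $\Sigma$ to meet the small ``distinguishing'' sets $\{1,2\}$, $\{5,3n+2\}$, and $\operatorname{supp}(A-B)=\{4,5\}\cup\{3j+2:2\le j\le n\}$. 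On the other hand, for all such small $\Sigma$ the restrictions of $C$ and of $D$ collapse to single-variable equations $x_\ell=1$; back-substituting these into $B|_\Sigma$ or $A|_\Sigma$ (which then read $x_m+(\text{const})=1$ with the constant $\ge 1$) forces one of the remaining support coordinates to vanish, contradicting $|\Sigma|=k$ --- the surviving point is then an $e_i$ or an edge vertex already accounted for. Carrying this dichotomy through the finitely many shapes of $\Sigma$ (organized by the $\binom{4}{k}$ choices of tight upper inequalities) eliminates every $k\ge 3$ vertex, giving $\operatorname{vert}(P)\subseteq V_n$ and hence $P=Q_n$.
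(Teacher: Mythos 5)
Your overall architecture is the same as the paper's: show the polyhedron cut out by \eqref{eq:H1rep} is bounded, then classify its vertices by the number $k$ of nonzero coordinates, using that only the four multivariate inequalities can be tight on the support, so $k\le 4$; the cases $k=0,1,2$ recover exactly the vertex list \eqref{eq:vertices}. Up to that point you are in fact more careful than the paper (explicit boundedness, the rank bound $k\le4$, and the $k=2$ case with the feasibility check that no inequality contains both support variables). The problem is the elimination of $k\in\{3,4\}$, which is the crux of the proposition, and there your sketched mechanism does not work as stated.

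Write $A,B,C,D$ for the four multivariate constraints in the order listed in \eqref{eq:H1rep}. Take $\Sigma=\{2,4,5\}$ with tight set $\{A,B,D\}$: restricted to $\Sigma$ these read $x_4=1$, $x_5=1$, $x_2=1$, a permutation system. Together with the $3n$ coordinate hyperplanes this is a rank-$(3n+3)$ tight system whose unique solution is the fully supported point $x_2=x_4=x_5=1$; nothing collapses and no support coordinate is forced to vanish, and your intermediate claim also fails here since $C$ restricts to $x_2+x_5$, not a single-variable equation. This candidate is excluded only because it violates the \emph{non-tight} inequality $C$ ($1-x_2-x_5<0$), a feasibility check you perform for $k=2$ but omit for $k\ge3$; without it the case analysis cannot terminate, since such permutation-type candidates survive every tight-system manipulation. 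To close the gap you need (i) exactly the observation the paper's proof rests on: there is no triple of coordinates that pairwise avoid appearing together in some inequality (the graph on the $2n$ admissible pairs is triangle-free; e.g. $\{2,4\}$ and $\{4,5\}$ are admissible but $x_2,x_5$ share $C$), which kills all all-ones candidates with $k\ge3$ by infeasibility of the remaining inequality; and (ii) a short argument that the restricted $0$--$1$ systems admit no other strictly positive solutions (e.g. the cyclic $3\times3$ pattern with solution $(\tfrac12,\tfrac12,\tfrac12)$ would require three support coordinates each lying in exactly two of the three tight inequalities, which the incidence structure of $A,B,C,D$ forbids, and the $k=4$ configurations all force a support coordinate to zero). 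With these additions your route becomes a correct, somewhat more detailed version of the paper's argument; as written, the $k\ge3$ step fails.
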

\begin{proof}
  Let $Q_n^{\mcH}$ be the polytope defined by (\ref{eq:H1rep}). We aim to
  show that $Q_n$ and $Q_n^{\mcH}$ coincide. Note that each coordinate 
  $x_i, i\in\{1,\ldots,3n+3\}$ is bounded in $Q_n^{\mcH}$; in
  particular, $0\leq x_i\leq 1.$ Otherwise,  if $x_i> 1$  (or $x_i< 0$) at
  least one of the multivariate (resp. univariate) inequalities will
  be violated. It remains to show that the vertex
  sets of $Q_n$ and $Q_n^{\mcH}$ coincide. Observe that none of the
  inequalities in (\ref{eq:H1rep}) can be obtained by taking positive
  linear combinations of the remaining inequalities, implying that
  (\ref{eq:H1rep}) is an {\em irredundant} description of $Q_n^{\mcH},$
  hence each inequality defines a distinct facet \cite{Z}.


  A vertex of the
  polytope $Q_n^{\mcH}$ must be in the intersection of at least $3n+3$
  hyperplanes described in (\ref{eq:H1rep}). Hence, a vertex must satisfy a subsystem of
  (\ref{eq:H1rep}) of size at least $(3n+3)\times(3n+3)$ at equality. We begin by considering subsets of $3n+3$
  inequalities whose corresponding linear systems are consistent. 

  First, consider all $3n+3$ univariate equations and set $x_i=0$ for
  all $i=1,\cdots,3n+3;$ this yields the origin $e_0$ as a vertex of $Q_n^{\mcH}.$
  Next, select $3n+2$ variables $x_i$ set equal to zero and one of
  the four multivariate equations. Note that some of these
  combinations will result in an
  inconsistent system. Those yielding a consistent system
  will have a solution with each coordinate zero except one of the
  $x_i$s, which will be 1; there are $3n+3$ distinct choices for the nonzero $x_i$. These choices yield the
  vertices $e_1,\ldots, e_{3n+3}.$ Thus far we have found $3n+4$ vertices of $Q_n^{\mcH}$
  and each is also a vertex of $Q_n.$

  Continuing in the same manner, we now choose $3n+1$ variables $x_i$ set
  equal to zero and two of the four multivariate equations. Each of
  the nonzero variables must take the value 1. Otherwise we would have
  $0<x_i,x_j<1,$ where $i\neq j,$ implying that they appear together
  in both multivariate equations. In this case, each multivariate
  equation is reduced to $1-x_i-x_j=0.$ However, this system yields a
  positive-dimensional face of $Q_n^{\mcH}$ and hence does not describe a
  vertex. Thus, both nonzero variables must be 1, and they cannot appear in the same multivariate
  equation. Independent of the choice of the two multivariate equations,
  the pair $\{x_i,x_j\}$ will be a subset
  of the variables in the symmetric difference of their supports. In particular, there are $2n$ distinct such choices:
  $\{x_1,x_2\}, \{x_2,x_4\}, \{x_4,x_5\}, \{x_2,x_{3j+2}\},
  \{x_5,x_{3k+2}\},$ for $2\leq j\leq n-1, 2\leq k\leq n.$
  These combinations yield the $2n$ vertices
  $e_{12},e_{24},e_{28},e_{2,11},\ldots,e_{2,3n-1},e_{45},e_{58},\allowbreak\ldots,e_{5,3n+2}.$
  Together with the previously found $3n+4$ vertices, we have a total
  of $5n+4$ vertices of $Q_n^{\mcH}$, which are exactly the vertices of
  $Q_n$ shown in (\ref{eq:vertices}). It remains to show that $Q_n^{\mcH}$ does not have any more
  vertices.

  Suppose that $Q_n^{\mcH}$ has a vertex $q\not\in V_n.$ Then,
  since we considered all vertices of $Q_n^{\mcH}$ with zero, one, or two nonzero entries,
  $q$ must have more than two nonzero entries.  Now suppose that for distinct $i,j,$
  and $k$, the entries
$  q_i,q_j,$ and $q_k$ are
  all nonzero, and the remaining $3n$ entries of $q$ are zero. Note that
  $q_i,q_j,$ and $q_k$ must have value 1, otherwise $q$ cannot satisfy a zero-dimensional
  system constructed from the inequalities in (\ref{eq:H1rep}). Since $q_i=q_j=q_k$ 
 , the variables $x_i$, $x_j$, and $x_k$ cannot appear in the same inequality. But there is no
  possible choice for three such variables, implying it is also not
  possible to have more than three nonzero variables. Therefore, we have found
  all vertices of $Q_n^{\mcH};$ in particular, they coincide with
  the vertex representation of $Q_n,$ hence $Q_n^{\mcH}=Q_n.$
\end{proof}

Now we will compute the normalized Euclidean volume of $Q_n$ by constructing a unimodular triangulation. Let $d_n = n+3$. Similar to Example  \ref{example1} we can reduce $Q_n$ to a
lower-dimensional polytope $K_n\subset\RR^{d_n}$ 
by projecting down $2n$ dimensions
corresponding to the vectors $e_3,e_6,e_{3j+1},$ and $e_{3j+3}, 2\leq
j\leq n.$ These are the exponent vectors of the monomials $x_{X_j}$
and $x_{Y_j}$.  To
avoid ambiguity of notation, we relabel the standard unit vectors and their sums
after the projection (e.g. $v_1$ will be the $1$st standard unit vector in $\RR^{d_n}$ and $v_{12} = v_1 + v_2$), 
so $K_n=\conv(\mcV_n)$ where $|\mcV_n|=3n+4$ and 
\begin{equation}
  \label{eq:verticesProj}
\mathcal{V}_n = \{ v_0, v_1, \ldots, v_{d_n}, v_{12}, v_{23}, v_{25}, \ldots,
v_{2,d_{n-1}}, v_{34}, v_{45}, \ldots, v_{4,{d_n}}\}.
\end{equation}
Following the ideas of Example \ref{example1}, we construct a {\em
  placing} triangulation $\mathcal{T}_n$ of $K_n.$ Then we cone over $\mathcal{T}_n$
with the $2n$ remaining unit vectors from $V_n$ to recover a 
unimodular triangulation of $Q_n$.

We will construct $\mathcal T_n$ by successively placing vertices. After placing each vertex,
we will need information about the convex hull of the vertices already placed.
The following lemma describes these intermediate polytopes and is used in the construction of
$\mcT_n.$ The proofs are omitted as they follow the same process and reasoning as the proof
of Proposition \ref{l:one-siteH1}.
\begin{lem}
  \label{l:HrepKns}
  Let $d_n = n+3$. For each $n$, let $K_{n-1}'$ be the embedding of $K_{n-1}$ in $\mathbb R^{d_n}$. Let $K_n^* = \conv(K_{n-1}'\cup\{v_{d_n}\})$ and
  $\wt{K}_n=\conv(K_n^*\cup\{v_{2,d_{n-1}}\}).$
  Then:
  \begin{enumerate}
    \item The $\mcH$-representation of $K_n^*$ is 
  \begin{equation}
    \begin{split}
      \label{eq:HrepKn*}
      1 - x_1 - x_3 - \sum_{j=2}^nx_{d_j} & \geq 0\\
      1 - x_1 - x_4 - x_{d_n} & \geq 0 \\
      1 - x_2 - x_4 - x_{d_n} & \geq 0\\
      1 - x_2 - x_{d_{n-1}} - x_{d_n} & \geq 0\\
      x_i & \geq 0, i = 1,\ldots, d_n = n+3.
    \end{split}
  \end{equation}
 \item The $\mcH$-representation of $\wt{K}_n$ is 
    \begin{align}
        1-x_1-x_3-\sum_{j=2}^nx_{d_j} & \geq 0 \label{eq:HKn1} \\
        1-x_1-x_4-x_{d_n} & \geq 0 \label{eq:HKn2}\\
        1-x_2-x_4-x_{d_n} & \geq 0 \label{eq:HKn3}\\
        x_i & \geq 0, i=1,\ldots,d_n. \label{eq:HKn4}
    \end{align}
    \end{enumerate}
  \end{lem}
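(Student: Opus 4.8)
The plan is to mirror the proof of Proposition~\ref{l:one-siteH1} for both parts. For part~(1), let $P\subseteq\RR^{d_n}$ be the polytope cut out by the inequalities in~(\ref{eq:HrepKn*}); the goal is to show $P=K_n^*$, and analogously in part~(2) that the polytope cut out by~(\ref{eq:HKn1})--(\ref{eq:HKn4}) equals $\wt{K}_n$. As in Proposition~\ref{l:one-siteH1}, I would first observe that $P$ is bounded: every index $i\in\{1,\ldots,d_n\}$ occurs in at least one of the multivariate inequalities, so $x_i>1$ forces one of them to fail while $x_i<0$ violates a univariate inequality; hence $0\le x_i\le 1$ on $P$. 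I would then note that the displayed system is irredundant --- none of the three (resp.\ four) multivariate inequalities is a nonnegative combination of the others, since each contains a variable the others do not, and each $x_i\ge 0$ is facet-defining because $\{x_i=0\}\cap P$ already contains enough of the points $v_0,v_j$ to span a facet --- so each listed inequality defines a distinct facet and it suffices to compare vertex sets.

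Next I would enumerate the vertices of $P$ exactly as in Proposition~\ref{l:one-siteH1}, organizing by the number of nonzero coordinates of a candidate vertex, which must lie on at least $d_n$ of the bounding hyperplanes whose equations form a consistent system with a single solution. Setting all coordinates to $0$ gives $v_0$; choosing $d_n-1$ of the univariate equations together with a compatible multivariate equation gives the unit vectors $v_1,\ldots,v_{d_n}$, the single nonzero coordinate being forced to $1$. A vertex with exactly two nonzero coordinates must, as in Proposition~\ref{l:one-siteH1}, have both coordinates equal to $1$ with the two corresponding variables occurring in no common multivariate inequality. Reading off~(\ref{eq:HrepKn*}), the pairs of indices that never co-occur in a multivariate inequality are precisely $\{1,2\},\{2,3\},\{3,4\}$ together with $\{2,d_j\}$ for $2\le j\le n-2$ and $\{4,d_j\}$ for $2\le j\le n-1$, and these yield exactly the pair-sum vertices listed in $\mcV_{n-1}$. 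Hence the vertices of $P$ are $v_0,v_1,\ldots,v_{d_n}$ together with exactly those pair-sums, which is precisely the vertex set $\mcV_{n-1}\cup\{v_{d_n}\}$ of $K_n^*=\conv(K_{n-1}'\cup\{v_{d_n}\})$. For part~(2), deleting the inequality $1-x_2-x_{d_{n-1}}-x_{d_n}\ge 0$ makes the pair $\{2,d_{n-1}\}$ newly non-co-occurring, so exactly one extra pair-sum vertex $v_{2,d_{n-1}}$ appears and nothing else changes, yielding the vertex set $\mcV_{n-1}\cup\{v_{d_n},v_{2,d_{n-1}}\}$ of $\wt{K}_n$. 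A vertex with three or more nonzero coordinates would require three variables that pairwise fail to share a multivariate inequality; but the ``non-co-occurrence graph'' on $\{1,2,3,4,d_2,\ldots,d_n\}$ contains no triangle in either case, ruling this out just as in Proposition~\ref{l:one-siteH1}.

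The routine but error-prone step --- and the one to watch --- is the two-nonzero-coordinate bookkeeping: determining exactly which pairs of variables share no multivariate inequality, checking that this list coincides with the pair-sum vertices of $\mcV_{n-1}$ (resp.\ $\mcV_{n-1}\cup\{v_{2,d_{n-1}}\}$), and confirming that passing from $K_n^*$ to $\wt{K}_n$ excises exactly the single facet $1-x_2-x_{d_{n-1}}-x_{d_n}\ge 0$ and creates exactly the one new vertex $v_{2,d_{n-1}}$. Everything else is a direct transcription of the argument given for Proposition~\ref{l:one-siteH1}.
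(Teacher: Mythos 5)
Your proposal is correct and is essentially the proof the paper intends: the paper omits the argument for this lemma, saying only that it follows the same process as Proposition~\ref{l:one-siteH1}, and your bookkeeping checks out --- the non-co-occurring pairs for \eqref{eq:HrepKn*} are exactly $\{1,2\},\{2,3\},\{3,4\}$, $\{2,d_j\}$ for $2\le j\le n-2$, $\{4,d_j\}$ for $2\le j\le n-1$, matching the pair-sum vertices of $\mcV_{n-1}$, and dropping $1-x_2-x_{d_{n-1}}-x_{d_n}\ge 0$ adds only the pair $\{2,d_{n-1}\}$, i.e.\ the vertex $v_{2,d_{n-1}}$, while the non-co-occurrence graph is triangle-free in both cases. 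One small repair: your justification of irredundancy (``each multivariate inequality contains a variable the others do not'') is false for $1-x_1-x_4-x_{d_n}\ge 0$ and for the third and fourth inequalities of \eqref{eq:HrepKn*}, all of whose variables reappear elsewhere; irredundancy still holds (e.g.\ $v_1+v_4$ satisfies every inequality except $1-x_1-x_4-x_{d_n}\ge 0$), and in any case it is not needed for the vertex-set comparison that carries the proof.
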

\begin{lem}
  \label{l:one-siteT}
  Let $n\geq 2$ and $d_j=j+3, j\leq n.$ Let $\mathcal{T}_1$ be the triangulation of $K_1$ as described in Example \ref{example1}. Let $\mathcal{T}_n$ be the placing triangulation obtained from $\mcT_{n-1}$ by coning over the $k_{n-1}$ simplices of $\mcT_{n-1}$ with apex $v_{d_n}$ and placing  $v_{2,d_{n-1}}$ and $v_{4,d_n}$, in that order.  The simplices obtained by placing $v_{2,d_{n-1}}$ and $v_{4,d_n}$ are
  \begin{equation} \label{eq:simplices}
    \begin{split}
      \sigma_{k_{n-1}+1} & = \conv( v_2,v_{d_{n-1}},v_{d_n},
      v_{12},v_{23},v_{25},{\ldots,v_{2,d_{n-1}}},v_{4,d_{n-1}} ) \\
      \sigma_{k_{n-1}+2} & = \conv( v_1,v_4,v_{d_n},v_{12},v_{34},v_{45},\ldots,v_{4,d_n} ) \\
      \sigma_{k_{n-1}+3} & = \conv( v_2, v_4, v_{d_n}, v_{12}, v_{23},
      v_{25},\ldots, v_{2,d_{n-1}}, v_{4,d_n} ) \\
      \sigma_{k_{n-1}+4} & = \conv( v_2, v_{d_n}, v_{12}, v_{23},
      v_{34},v_{45},\ldots, v_{4,d_n} ) \\
      \sigma_{k_{n-1}+5} & = \sigma_{k_{n-1}+4}\backslash\{v_{34}\}\cup\{v_{25}\}\\
      & \vdots \\
      \sigma_{k_{n-1}+d_{n-1}} & = \sigma_{k_{n-1}+d_{n-1}-1}\backslash \{v_{4,d_{n-1}-1}\}
      \cup \{v_{2,d_{n-1}}\}. 
    \end{split}
  \end{equation}
Furthermore, $\mcT_n$ has $k_n=4+\sum_{j=2}^{n-1}d_j$ simplices and is unimodular.
\end{lem}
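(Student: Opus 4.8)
The plan is to proceed by induction on $n$. The base case $n=1$ is the explicit construction in Example~\ref{example1}, where $\mathcal{T}_1$ has $k_1=4$ unimodular simplices. For the inductive step, assume $\mathcal{T}_{n-1}$ is a unimodular triangulation of $K_{n-1}$ with $k_{n-1}$ simplices. Since $\mathcal{V}_n=\mathcal{V}_{n-1}\cup\{v_{d_n},v_{2,d_{n-1}},v_{4,d_n}\}$, the triangulation $\mathcal{T}_n$ described in the statement is exactly the placing triangulation of $K_n$ obtained by appending these three vertices, in the indicated order, to whatever vertex order produced $\mathcal{T}_{n-1}$; since a placing triangulation is always a genuine triangulation of the polytope (cf.~\cite{dL,L-S}), the only things to verify are (a) which new full-dimensional simplices each of the three placements contributes, (b) that the total number of simplices is $k_n$, and (c) that unimodularity survives each placement. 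For (a) I would use the $\mathcal{H}$-representations of the intermediate polytopes $K_n^*$ and $\widetilde{K}_n$ from Lemma~\ref{l:HrepKns}, together with the $\mathcal{H}$-representation of $K_n$ itself, obtained either in the same manner or by eliminating the $2n$ projected coordinates from Proposition~\ref{l:one-siteH1}.

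\emph{Placing $v_{d_n}$.} From Lemma~\ref{l:HrepKns}(1) the facet of $K_n^*$ cut out by $x_{d_n}=0$ is exactly the embedded base $K_{n-1}'$, and $v_{d_n}$ is the unique vertex of $K_n^*$ off this hyperplane; hence $K_n^*$ is a pyramid with base $K_{n-1}'$ and apex $v_{d_n}$. Placing the apex of a pyramid last produces precisely the cone of the base triangulation over the apex, so $\{\,\conv(\sigma\cup\{v_{d_n}\}):\sigma\in\mathcal{T}_{n-1}\,\}$ is a triangulation of $K_n^*$ with $k_{n-1}$ simplices. Each such simplex is unimodular: writing its normalized volume as $|\det[\,w_1-w_0,\ldots,w_{d_n-1}-w_0,\,e_{d_n}-w_0\,]|$ with $w_0,\ldots,w_{d_n-1}$ the vertices of $\sigma$ (all having last coordinate $0$) and expanding along the $d_n$-th coordinate reduces it to the normalized volume of $\sigma$, which equals $1$ by the inductive hypothesis.

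\emph{Placing $v_{2,d_{n-1}}$, then $v_{4,d_n}$.} Substituting the point $v_{2,d_{n-1}}$ (which has $x_2=x_{d_{n-1}}=1$ and all other coordinates $0$) into the facet inequalities of $K_n^*$ shows that the only violated one is $1-x_2-x_{d_{n-1}}-x_{d_n}\ge0$; comparing with Lemma~\ref{l:HrepKns}(2) confirms this is precisely the facet $G$ that is no longer a facet of $\widetilde{K}_n$, so $G$ is the unique facet of $K_n^*$ visible from $v_{2,d_{n-1}}$. I would then list the vertices of $K_n^*$ lying on $G$ (those with $x_2+x_{d_{n-1}}+x_{d_n}=1$), check there are exactly $d_n$ of them so that $G$ is a simplex, deduce that the unique facet-simplex of the current triangulation lying on $G$ is the cone over $v_{d_n}$ of the (simplex) facet of $K_{n-1}$ cut out by $x_2+x_{d_{n-1}}=1$, and cone it over $v_{2,d_{n-1}}$ to obtain the single new simplex $\sigma_{k_{n-1}+1}$ in (\ref{eq:simplices}). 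The same argument for $v_{4,d_n}$ shows it violates exactly the two inequalities $1-x_1-x_4-x_{d_n}\ge0$ and $1-x_2-x_4-x_{d_n}\ge0$, so the visible region of $\widetilde{K}_n$ consists of those two facets; tracking which facet-simplices of the current triangulation lie on them and coning each over $v_{4,d_n}$ produces the chain $\sigma_{k_{n-1}+2},\ldots,\sigma_{k_{n-1}+d_{n-1}}$, with consecutive terms differing by a single vertex because the induced triangulation of that visible region inherits this staircase shape, through the two previous placements, from $\mathcal{T}_{n-1}$. Each new simplex is checked to be unimodular by a direct $\pm1$ evaluation of the determinant of its (near-triangular, $\{0,1\}$) vertex matrix. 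These two placings together add $1+(d_{n-1}-1)=d_{n-1}$ new simplices, so $k_n=k_{n-1}+d_{n-1}$, and iterating from $k_1=4$ yields the claimed value of $k_n$.

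The main obstacle is the last step: precisely pinning down the induced triangulation on the two visible facets of $\widetilde{K}_n$ and verifying that coning it over $v_{4,d_n}$ reproduces exactly the chain in (\ref{eq:simplices}), including the one-vertex-at-a-time transitions. This requires carrying the combinatorial structure of $\mathcal{T}_{n-1}$---in particular, which of its simplices sit on the faces relevant to the next two placements---through the pyramid construction and the single-simplex extension, and is the bookkeeping-heavy core of the argument. By comparison, the pyramid argument, the determinantal unimodularity checks, and the final arithmetic are routine, as is the verification that only the stated facets are visible at each placement once the $\mathcal{H}$-representations of Lemma~\ref{l:HrepKns} are available.
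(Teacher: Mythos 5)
Your overall strategy is the same as the paper's: build $\mathcal{T}_n$ inductively by placing $v_{d_n}$, $v_{2,d_{n-1}}$, $v_{4,d_n}$ in that order, use the $\mathcal{H}$-representations of the intermediate polytopes $K_n^*$ and $\widetilde{K}_n$ from Lemma~\ref{l:HrepKns} to find the visible facets at each placement, and count $1+1+(d_n-3)=d_{n-1}$ new simplices. However, there is a genuine gap, and you have named it yourself: you never actually determine the induced triangulation of the second visible facet $F_{2,4,d_n}$, you only assert that it ``inherits this staircase shape'' from $\mathcal{T}_{n-1}$ and defer the verification as ``the main obstacle.'' This is exactly the nontrivial content of the lemma, since $F_{2,4,d_n}$ has $2d_{n-1}-2$ vertices and is not a simplex, so the new simplices $\sigma_{k_{n-1}+3},\ldots,\sigma_{k_{n-1}+d_{n-1}}$ cannot be read off from the facet alone. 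The paper closes this by strengthening the inductive hypothesis to include the explicit list (\ref{eq:simplices}) for $\mathcal{T}_{n-1}$ and then intersecting the already-constructed simplices of the partial triangulation with the hyperplane $1-x_2-x_4-x_{d_n}=0$: only $\sigma_{k_{n-2}+1}\cup\{v_{d_n}\},\ldots,\sigma_{k_{n-2}+d_{n-2}}\cup\{v_{d_n}\}$, $\sigma_{k_{n-1}+1}$, and $\sigma_{k_{n-1}+2}$ meet it in $d_{n-1}$-dimensional faces, and these intersections are precisely the $d_{n-2}-1$ simplices triangulating $F_{2,4,d_n}$, whose cones over $v_{4,d_n}$ give the one-vertex-at-a-time chain. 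Without this bookkeeping (which is where the explicit form of (\ref{eq:simplices}) at stage $n-1$ is used), your argument establishes at most that some placing triangulation with the right number of simplices exists on the first two placements, not the stated description of $\mathcal{T}_n$, and hence the induction cannot be propagated.

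A secondary difference: for unimodularity you propose direct determinant evaluations of the new $\{0,1\}$ vertex matrices; these are also left unverified, and the matrices for $\sigma_{k_{n-1}+3},\ldots,\sigma_{k_{n-1}+d_{n-1}}$ (which mix the vectors $v_{2,j}$ and $v_{4,\ell}$) are not obviously near-triangular. The paper sidesteps this entirely with a structural argument: $K_n$ is a compressed polytope, so all pulling triangulations are unimodular, and a placing (pushing) triangulation becomes a pulling triangulation after reversing the vertex order and the lifting heights; this gives regularity and unimodularity of $\mathcal{T}_n$ in one stroke. Your determinant route could in principle work, but as written it is an additional unproved claim rather than a completed step.
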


\begin{proof}
  The triangulation $\mathcal T_n$ is obtained
inductively beginning with the explicit construction of
$\mathcal{T}_1$ in Example \ref{example1} containing $k_1 =4$ unimodular
simplices. Suppose the triangulation $\mcT_{n-1}$ has been constructed by
successively placing vertices as described in the statement of the lemma.  Furthermore,
assume $\mcT_{n-1}$ contains $k_{n-1}=4+\sum_{j=2}^{n-2}d_j$ unimodular
simplices as described in \eqref{eq:simplices}.
We embed $K_{n-1}$ and its triangulation $\mcT_{n-1}$ into $\RR^{d_n}$
and place the vertices (i) $v_{d_n}$, (ii) $v_{2,d_{n-1}},$ and (iii) $v_{4,d_n}$ as follows.
\begin{enumerate}
  \item[(i)] Placing $v_{d_n}:$ Placing $v_{d_n}$  increases the dimension of the polytope $K_{n-1}$ by one from $d_{n-1}$ to $d_n$. 
  We cone over all simplices of $\mcT_{n-1}$ with $v_{d_n}$ and obtain the first $k_{n-1}$ simplices of
    $\mcT_n.$ The resulting polytope is $K_n^*$ and its facet defining
    inequalities are given in (\ref{eq:HrepKn*}).
  \item[(ii)] Placing $v_{2,d_{n-1}}:$ Consider the facet defining
      inequalities of $K_n^*$ in (\ref{eq:HrepKn*}). Note that the
      hyperplane $1 - x_2-x_{d_{n-1}}-x_{d_n}=0$ is the only one
      separating $v_{2,d_{n-1}}$ and $K_n^*.$ Facets of $K_n^*$
      contained in this hyperplane will be visible from
      $v_{2,d_{n-1}}.$ There is only one such facet, namely
      \begin{equation*}
        F_{2,d_{n-1},d_n} =
        \conv(v_2,v_{d_{n-1}},v_{d_n},v_{12},v_{23},v_{25},\ldots,v_{2,d_{n-2}},v_{4,d_{n-1}}),
        \end{equation*}
      containing $d_n$ vertices and hence it is a simplex of dimension $d_{n-1}.$
     Coning over $F_{2,d_{n-1},d_n}$ with $v_{2,d_{n-1}}$ yields the
      $d_n$-dimensional simplex $\sigma_{k_{n-1}+1}$. The resulting polytope after placing $v_{2,d_{n-1}}$ is
      $\wt{K}_n$ whose facet defining inequalities are given in (\ref{eq:HKn1}) -- (\ref{eq:HKn4}).
      \item[(iii)] Placing $v_{4,d_n}:$ We aim to show that in this step
        we add $d_{n-1}-1$ new simplices.
        Investigating the facet defining inequalities of $\wt{K}_n$, we
        note that there are two hyperplanes separating $v_{4,d_n}$
        from $\wt{K}_n,$ namely (\ref{eq:HKn2}) and (\ref{eq:HKn3})
        containing the respective facets
        \begin{align*}
        F_{1,4,d_n} &
                      =\conv(v_1,v_4,v_{d_n},v_{12},v_{34},v_{45},\ldots,v_{4,d_{n-1}})\\
        F_{2,4,d_n} &
                      =\conv(v_2,v_4,v_{d_n},v_{12},v_{23},v_{25},\ldots,v_{2,d_{n-1}},v_{34},v_{45},{\color{red}\ldots},
                      v_{4,d_{n-1}}). 
        \end{align*}
        Note that $F_{1,4,d_n}$ is a $d_{n-1}$-dimensional
        simplex, so coning over it with $v_{4,d_n}$ results in the
        $d_n$-dimensional simplex $\sigma_{k_{n-1}+2}.$ 
        
        The facet $F_{2,4,d_n}$ lies in the facet defining hyperplane $1 - x_2 - x_{4} - x_{d_n}=0$; it has
        $2d_{n-1}-2$ vertices and a unimodular triangulation induced by the triangulation of $\wt{K}_n$. In particular, the simplices in the triangulation of $F_{2,4,d_n}$ are 
  $\sigma_{k_{n-2}+3}\setminus \{v_{d_{n-1}}\} \cup \{v_{d_{n}}\},\ldots,\sigma_{k_{n-2}+d_{n-2}} \setminus \{v_{d_{n-1}}\} \cup \{v_{d_{n}}\}$, $\sigma_{k_{n-1}+1} \setminus \{v_{d_{n-1}}\}$.  These $d_{n-1}$ dimensional simplices are obtained by considering the intersection of the simplices $\sigma_{k_{n-2}+1} \cup \{v_{d_{n}}\},\ldots,\sigma_{k_{n-2}+d_{n-2}} \cup \{v_{d_{n}}\},$ $\sigma_{k_{n-1}+1}, \sigma_{k_{n-1}+2}$ of $\wt{K}_n$ with the hyperplane $1 - x_2 - x_{4} - x_{d_n} = 0$;  note that we do not need to consider the remaining simplices of $\wt{K}_n$, since each intersection with $F_{2,4,d_n}$ is necessarily of dimension less than $d_{n-1}$. 

We cone
       over the triangulation of $F_{2,4,d_n}$ with $v_{4,d_n}$ and obtain
       the $d_{n-2}-1 = d_n-3$ simplices
       $\sigma_{k_{n-1}+3},\ldots,\sigma_{k_{n-1}+d_{n-1}}$.  Hence, we have a total of
      \begin{equation*}
        k_n = k_{n-1}+2+(d_n-3) = k_{n-1}+d_{n-1} = 4+\sum_{j=2}^{n-1}d_j
        \end{equation*}
      simplices in $\mcT_n.$ 
    \end{enumerate}

Finally, we show that the placing triangulation $\mcT_n$ is
      unimodular.  The polytope $K_n$ is a $d_n$-dimensional
{\em compressed} polytope \cite{dL},
implying that all of its {\em pulling} triangulations are
unimodular. A placing triangulation is equivalent to a pushing
triangulation. The latter is a regular triangulation with a lifting
vector of heights $\omega: J\rightarrow\RR,$ where $J$ is the set of
labels on $\mathcal{V}_n$ with respect to some order. Reversing the order of the
labels of $\mathcal{V}_n$ and the heights of the weight vector
$\omega$ makes the pushing
triangulation into a pulling triangulation \cite{L-S, dL}. Hence, $\mathcal{T}_n$ as
constructed is a regular unimodular triangulation. 
\end{proof}


\begin{proof}[Proof of Theorem \ref{thm:one-siteMV}.]
The first equality in (\ref{eq:one-siteMV}) follows from the
definition of mixed volume in the special case
when all polytopes are identical. We aim to obtain a unimodular
triangulation of $Q_n.$
  By Lemma \ref{l:one-siteT} $K_n$ has a triangulation $\mcT_n$ with
\begin{equation*}
  4+\sum_{i=1}^{n-1}d_i = 4+\sum_{i=1}^{n-1}i+3 = \frac{(n+4)(n+1)}{2}-1
\end{equation*}
simplices. To achieve a unimodular triangulation of $Q_n,$ we cone over the triangulation
$\mcT_n$ in the $2n$ originally-collapsed dimensions, which preserves
the number of simplices. 
The polytope $Q_n$ has dimension $3n+3,$
hence the normalized Euclidean volume of each full dimensional
unimodular simplex is $\frac{1}{(3n+3)!}.$ The second equality of
(\ref{eq:one-siteMV}) now follows. 
\end{proof} 

The mixed volume for the randomized system of
$PC_n$ is quadratic in $n,$ which
is a tighter bound than the exponential B\'ezout bound. Nonetheless, for 
it is still significantly higher than the steady-state
degree of the ideal that we witness in computation. Indeed, based on numerical computations up to $n=15$, we 
conjecture the following for the steady-state degree of $PC_n$, which is linear in $n$. 

\begin{conj} \label{conj:ssdPC}
The steady-state degree of the chemical reaction network $PC_n$ is
$2n+1$. 
\end{conj}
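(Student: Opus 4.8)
A natural route to Conjecture~\ref{conj:ssdPC} is to eliminate variables and reduce the full steady-state system of $PC_n$ to a single univariate equation, in the spirit of the classical analysis of distributive phosphorylation (cf.\ \cite{Wang2008,HFC2013}), and then to count its roots. The plan has three steps: (1) use the cascade structure to parametrize a steady state by a single quantity $u=x_E/x_F$; (2) substitute into the three conservation equations to obtain a univariate polynomial whose degree can be tracked exactly; and (3) show that, for generic $(\bk,\bc)$, the resulting correspondence between steady states and roots is a bijection, so that the count of roots is exactly the steady-state degree.

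For Step 1, I would first use the equations $\dot x_{X_j}=0$ and $\dot x_{Y_j}=0$ from (\ref{eq:one-siteODEs}) to solve $x_{X_j}=a_j\,x_{S_{j-1}}x_E$ and $x_{Y_j}=b_j\,x_{S_j}x_F$ with $a_j,b_j$ explicit nonzero rational functions of the rate constants. Substituting these into $\dot x_{S_0}=0,\dot x_{S_1}=0,\ldots$ and using the global identity $\sum_{i=4}^{3n+4}f_i\equiv 0$, the flux-balance relations $k_{12}x_{X_1}=k_{45}x_{Y_1}$, $k_{67}x_{X_2}=k_{93}x_{Y_2},\ldots$ telescope out, forcing $x_{S_j}=\gamma_j\,u^{\,j}\,x_{S_0}$ with $\gamma_0=1$ and $\gamma_j\neq 0$ (and $x_{X_j},x_{Y_j}$ likewise proportional to $u^{\,j}x_{S_0}x_F$); one must check here that the ``middle'' equations $\dot x_{S_j}=0$, $2\le j\le n-1$, are consistent with, rather than additional to, this cascade. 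For Step 2, writing $\tilde A(u)=\sum_{j\ge 1}a_j\gamma_{j-1}u^{\,j}$, $\tilde B(u)=\sum_{j\ge 1}b_j\gamma_j u^{\,j}$, $\tilde C=\tilde A+\tilde B$ and $\Gamma(u)=\sum_{j=0}^n\gamma_j u^{\,j}$, the conservation equations $f_1,f_2,f_3$ become $x_F(u+v\tilde A)=c_E$, $x_F(1+v\tilde B)=c_F$, and $v\Gamma+vx_F\tilde C=c_S$, where $v:=x_{S_0}$ and $c_S$ is the total-substrate constant. The first two are linear in $(v,x_F)$; solving them gives $v=N(u)/D(u)$ with $N$ of degree $1$ and $D=c_F\tilde A-c_E\tilde B$ of degree $n$ and $D(0)=0$. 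Substituting $x_F=c_F/(1+v\tilde B)$ and $v=N/D$ into the third equation and clearing denominators yields a polynomial $\Phi(u)$ of degree $2n+2$ with generic leading coefficient $c_F^{\,2}b_n\gamma_n^{\,2}\neq 0$; since we cleared a factor $D(u)^2$ with $D(0)=0$, the value $u=0$ is a root, and a short computation shows it is simple (the constant term of $\Phi/u$ equals $c_Ec_Fa_1\neq 0$) and that $\Phi$ does not vanish at the other roots of $D$. Setting $P(u):=\Phi(u)/u$ gives, for generic $(\bk,\bc)$, a polynomial of degree exactly $2n+1$ with nonzero constant term (consistent with the known value $3$ for $n=1$).

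For Step 3, it remains to see that $x\mapsto u$ is a bijection from the complex steady states of $PC_n$ onto the root set of $P$. First, $PC_n$ has no boundary steady states for generic $\bc$: if $x_E=0$ then all $x_{X_j}=0$ and $f_1$ fails since $c_E\neq 0$; symmetrically $x_F\neq 0$; and if $x_{S_j}=0$ for some $j$, the chain $\dot x_{Y_j}=0,\ \dot x_{X_{j+1}}=0,\ \dot x_{S_{j+1}}=0,\ldots$ together with $\dot x_{X_1}=0,\ \dot x_{S_0}=0,\ldots$ forces all $x_{S_i},x_{X_i},x_{Y_i}$ to vanish, contradicting $f_3$ since $c_S\neq 0$. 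So every steady state lies in $(\CC^{\ast})^{3n+3}$, determines a value $u\neq 0,\infty$, and by Steps~1--2 that value is a root of $P$. Conversely, given a root $u_0$ of $P$ one checks $D(u_0)\neq 0$ and $1+v\tilde B(u_0)\neq 0$ for generic parameters, so $v$, $x_F$, and hence every remaining coordinate are uniquely and finitely determined; distinct roots produce distinct steady states. Therefore the steady-state degree of $PC_n$ equals $\deg P=2n+1$.

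The main obstacle is the exact degree claim in Step~2: one must describe the $\gamma_j$'s (hence $\tilde A,\tilde B,\tilde C,D$) precisely enough to rule out any further cancellation dropping $\deg P$ below $2n+1$, and to verify that the leading coefficient of $\Phi$ and the constant term of $\Phi/u$ are nonzero polynomials in $(\bk,\bc)$ — only then does semicontinuity give \emph{exactly} $2n+1$ complex solutions for generic parameters rather than merely an upper bound. I expect the cleanest organization is an induction on $n$ following the gluing construction of \cite{G-H-M-S}: adjoining one phosphorylation site introduces exactly one new reversible pair and one new irreversible reaction, extending $\tilde A,\tilde B,\Gamma$ by one monomial each and multiplying the relevant resultant by a factor of degree $2$ in $u$, which accounts for the increment from $2(n-1)+1$ to $2n+1$. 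The genericity checks in Step~3 (no boundary steady states; $D(u_0)\neq 0$ and $1+v\tilde B(u_0)\neq 0$ at roots of $P$) are routine but essential, since they are precisely what prevents loss or spurious gain of solutions.
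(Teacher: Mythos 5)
The statement you are trying to prove is left as a conjecture in the paper: the authors support it only by numerical computations up to $n=15$, and in the remark following Conjecture~\ref{conj:ssdPC} they point to the degree-$(2n+1)$ polynomial arising in the positive reparameterization of \cite{Wang2008} as the likely route to a proof. Your proposal is essentially that route (eliminate $x_{X_j},x_{Y_j}$, parametrize the cascade by $u=x_E/x_F$, reduce to a univariate polynomial, count roots), so you are not diverging from the paper's intent --- but what you have written is a plan, not a proof, and the places where it is incomplete are exactly the places the paper itself could not close.

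Concretely: (i) the cascade relations $x_{S_j}=\gamma_j u^j x_{S_0}$ and the consistency of the middle equations $\dot x_{S_j}=0$ are asserted, not derived; they do follow from the flux-balance structure, but this must be carried out explicitly since everything downstream (the shapes of $\tilde A,\tilde B,\Gamma,D$) depends on it. (ii) The crucial claim is that $\deg P=2n+1$ \emph{exactly} and that the leading coefficient and constant term are nonzero polynomials in $(\bk,\bc)$; without verifying this (and ruling out cancellation when clearing denominators, including the claim that $\Phi$ does not vanish at the other roots of $D$), the argument yields at most an upper bound on the number of torus solutions, not the steady-state degree. (iii) The bijection in Step 3 needs genuine genericity arguments: multiplicity-one roots of $P$, no solutions lost where $D(u_0)=0$ or $1+v\tilde B(u_0)=0$, and no boundary steady states for generic complex $\bc$ (your boundary argument is plausible but itself relies on the unproven cascade of (i)). Since the proposed induction on the gluing construction and the nonvanishing certificates are precisely what is missing, the proposal as it stands does not establish the conjecture; it is a reasonable program whose hard steps remain open, consistent with the paper recording the statement only as a conjecture.
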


 \begin{rmk}
 We note the authors of \cite{Wang2008} show that the number of real positive solutions is bounded above by $2n-1$ by using a positive reparameterization.  Along the way they introduce a polynomial with degree $2n+1$. With careful treatment, we expect this polynomial could be used to establish steady-state degree of $PC_n$. 
 \end{rmk}


\medskip 

Our exploration of $Q_n$ reveals that Newton polytopes of steady-state equations are interesting combinatorially on their own. Indeed, we finish our discussion of $Q_n$ by showing that it is a matching polytope of a graph.

Let $G_n$ be the multigraph on $n+3$ vertices with $d=3n+3$ edges, such that
$G_n$ contains one four-cycle, $n-1$ edges incident with one node of
the four-cycle, say $s_1,$ and $2n$ parallel edges connecting $s_1$ diagonally with $s_3.$ See Figure \ref{fig:Gn} for example. Each edge of $G_n$ 
represents a species of $PC_n.$ 

\begin{figure}
  \vspace{-5em}
  \begin{multicols}{2}
  \centering
  \includegraphics[width=4cm]{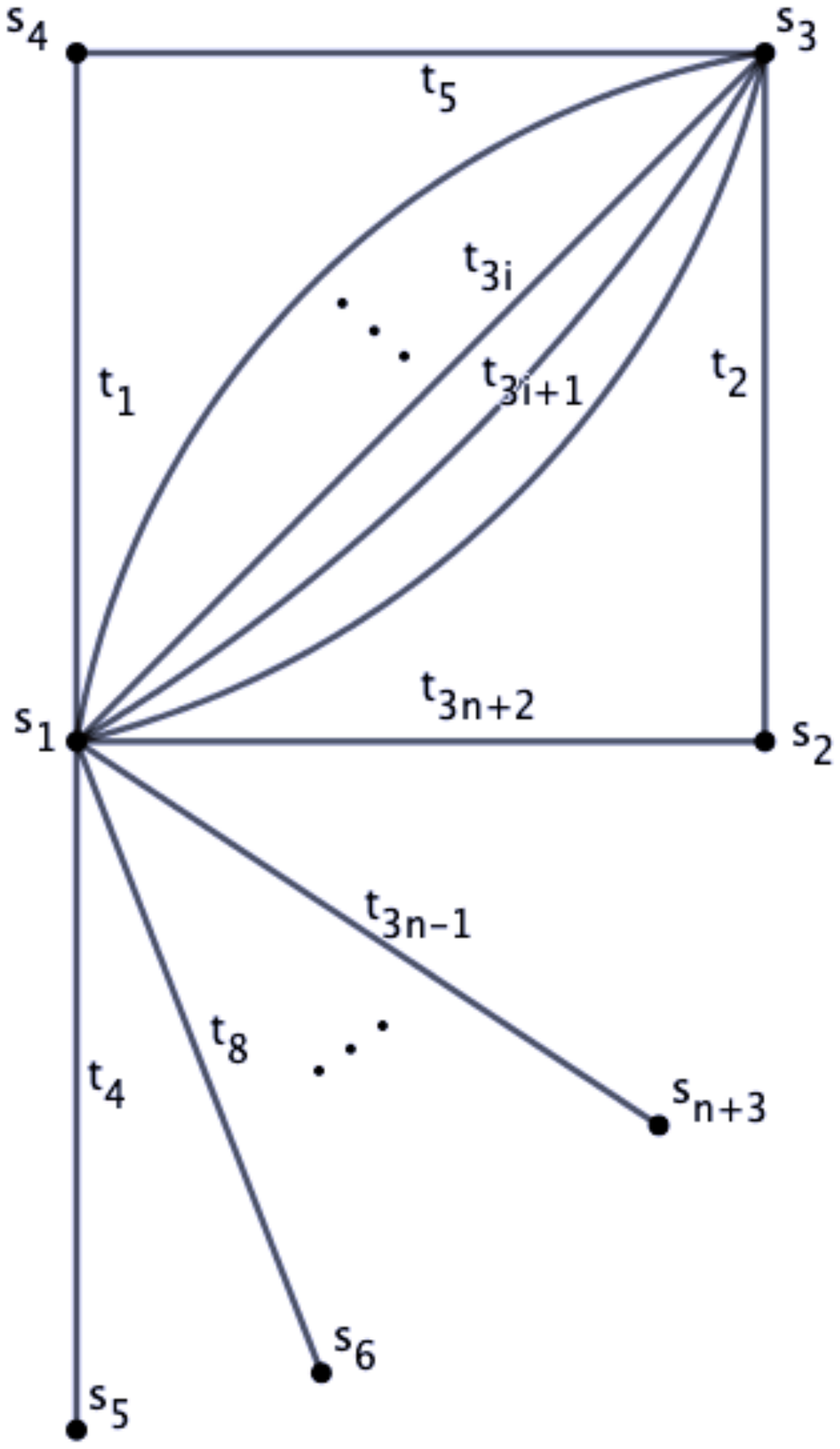}
  \caption{The graph $G_n; Q_n = P_{MA}(G_n).$}
  \label{fig:Gn+}

  \includegraphics[width=3.9cm]{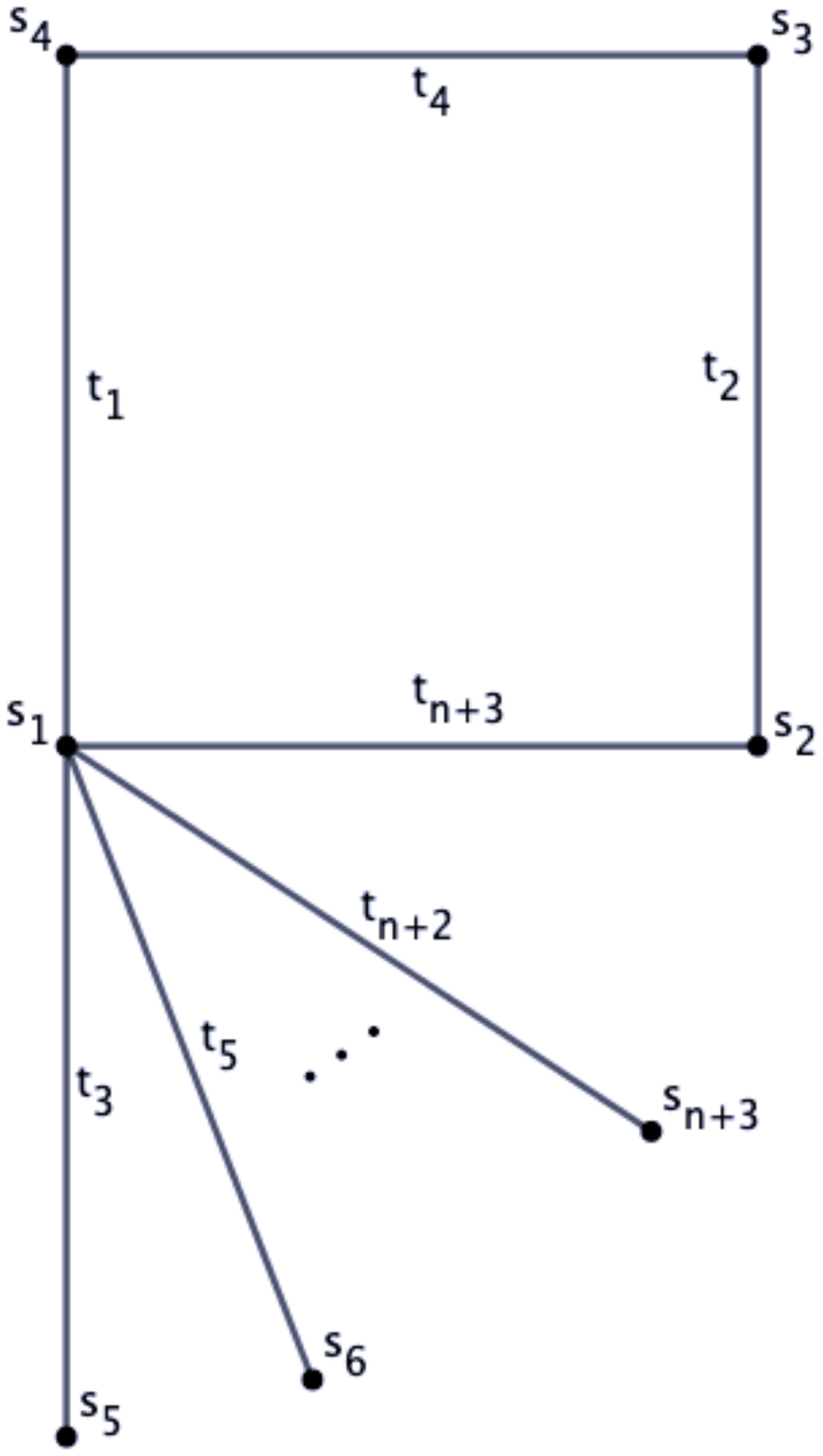}
  \caption{ The graph $\wt{G}_n; K_n = P_{MA}(\wt{G}_n).$}
  \label{fig:Gn}
  \end{multicols}
\end{figure}

The {\em matching polytope} of the graph $G_n$ is the convex hull of
the incidence vectors of all matchings of $G_n,$ i.e.,
\begin{equation*}
  P_{MA}(G_n) = \conv\{\chi^M | M \text{~is~a~matching~} of G_n\}.
\end{equation*}
A {\em matching} of $G_n$ is a subset of edges $M\subseteq E(G_n)$ such
that each vertex is incident with no more than one edge of $M.$ The
{\em incidence} vector $\chi^M\in\{0,1\}^{|E(G_n)|}$ of a matching $M$
is
\begin{equation*}
  \chi^M_t = \left\{ \begin{array}{ll}
                       1, & t\in M, \\
                       0, & \text{otherwise}.
                     \end{array}
                     \right.
                   \end{equation*}
Each matching of the graph $G_n,$ equivalently each vertex of
$P_{MA}(G_n),$ corresponds to the support of a monomial in the
dynamical polynomial
system $P_n$ of \S \ref{subsec:one-site}.          

\begin{prop}\label{matchPolyQn}
  The polytope $Q_n$ is the matching polytope of the graph $G_n$
  described above, i.e., $Q_n = P_{MA}(G_n).$ 
\end{prop}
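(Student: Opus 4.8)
The plan is to show that, under the natural identification of the $3n+3$ edges of $G_n$ with the $3n+3$ indeterminates of $P_n$ (equivalently, with the coordinates of $\RR^{3n+3}$), the incidence vectors of the matchings of $G_n$ are exactly the elements of the common support $S_n$ of the polynomials of $P_n$. Granting this, $P_{MA}(G_n)=\conv\{\chi^M : M\text{ a matching of }G_n\}=\conv(S_n)=Q_n$, which is the assertion. Both sides of the identity to be proved are subsets of $\{0,1\}^{3n+3}$: the $\chi^M$ by definition, and the elements of $S_n$ because the monomials of $P_n$ have exponent vectors in $\{0,1\}^{3n+3}$ --- the conservation laws (\ref{eq:one-siteConservation}) are linear, and the steady-state equations (\ref{eq:one-siteODEs}) are bilinear in distinct variables. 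Reading those equations off, $S_n$ consists of the zero vector $e_0$, all $3n+3$ unit vectors $e_i$ (every indeterminate appears linearly in some conservation law), and the $2n$ vectors with exactly two nonzero entries coming from the bilinear monomials $x_{S_j}x_E$ for $0\le j\le n-1$ and $x_{S_j}x_F$ for $1\le j\le n$.

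First I would fix the edge-indeterminate dictionary. Let the four-cycle of $G_n$ have vertices $s_1,s_2,s_3,s_4$ in cyclic order, so that $s_1$ and $s_3$ are the diagonal vertices joined by the $2n$ parallel edges, and let $s_1t_1,\dots,s_1t_{n-1}$ denote the $n-1$ pendant edges at $s_1$. Assign $x_E\leftrightarrow s_2s_3$, $x_F\leftrightarrow s_3s_4$, $x_{S_0}\leftrightarrow s_1s_4$, $x_{S_n}\leftrightarrow s_1s_2$, the intermediate substrates $x_{S_1},\dots,x_{S_{n-1}}$ to the pendant edges, and the enzyme-substrate intermediates $x_{X_1},\dots,x_{X_n},x_{Y_1},\dots,x_{Y_n}$ to the $2n$ parallel edges. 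The structural fact making this work is that every edge of $G_n$ other than $s_2s_3$ and $s_3s_4$ is incident with $s_1$, while $s_2s_3$ and $s_3s_4$ share the vertex $s_3$; consequently every matching of $G_n$ has at most two edges, paralleling the absence of monomials of degree $\ge 3$ in $P_n$.

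I would then verify the correspondence according to the size of the matching. The empty matching corresponds to $e_0$, and the single-edge matchings to the unit vectors $e_i$, each of which lies in $S_n$. A two-edge matching must use exactly one of $s_2s_3, s_3s_4$ (they cannot both appear, sharing $s_3$) together with one edge incident with $s_1$ that is disjoint from it. If the chosen cycle-edge is $s_2s_3$, then the $s_1$-incident edge must avoid both $s_2$ and $s_3$, so it is $s_1s_4$ or a pendant edge (not $s_1s_2$, which meets $s_2$, and not a parallel edge, which meets $s_3$); this yields exactly the matchings identified with $x_{S_0}x_E$ and with $x_{S_j}x_E$, $1\le j\le n-1$. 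Symmetrically, the choice $s_3s_4$ yields the matchings identified with $x_{S_n}x_F$ and with $x_{S_j}x_F$, $1\le j\le n-1$. These exhaust the $2n$ bilinear vectors of $S_n$, and conversely every bilinear monomial of $P_n$ corresponds under the dictionary to such a pair of disjoint edges. Since no matching has size $\ge 3$ and no monomial has degree $\ge 3$, the map $M\mapsto\chi^M$ is a bijection onto $S_n$, whence $Q_n=\conv(S_n)=P_{MA}(G_n)$.

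The proof is largely bookkeeping once the dictionary is fixed; the one step with real content is the degree-$2$ case, where one must check that the reaction asymmetry --- $S_0$ reacting only with $E$, $S_n$ only with $F$, and $S_1,\dots,S_{n-1}$ with both --- is faithfully mirrored by the incidences at $s_2,s_3,s_4$: namely, $s_1s_4$ meets $s_3s_4$ but not $s_2s_3$, $s_1s_2$ meets $s_2s_3$ but not $s_3s_4$, and each pendant edge meets neither. As an independent check, one can verify using Proposition~\ref{l:one-siteH1} that, under the dictionary, the four multivariate facet inequalities of $Q_n$ become the vertex inequalities $\sum_{e\ni s_1}x_e\le 1$ and $\sum_{e\ni s_3}x_e\le 1$ together with the odd-set (blossom) inequalities for the triples $\{s_1,s_2,s_3\}$ and $\{s_1,s_3,s_4\}$ --- precisely the irredundant form of Edmonds' description of $P_{MA}(G_n)$ --- but the bijection argument above is self-contained, and it is the proof I would give.
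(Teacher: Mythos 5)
Your proposal is correct and takes essentially the same route as the paper's proof: both identify the matchings of $G_n$ (the empty matching, the single-edge matchings, and the $2n$ two-edge matchings pairing the $x_E$- and $x_F$-edges with the appropriate substrate edges) bijectively with the $5n+4$ support vectors of $P_n$, i.e.\ the vertex set of $Q_n$, and conclude that the two polytopes coincide. Your version merely spells out the edge--variable dictionary and the incidence checks more explicitly than the paper does.
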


\begin{proof} 
  The incidence vector of a matching of $G_n$ containing only one edge $t_i$
  coincides with the standard vector $e_i$ with entry 1 in the $i$th position. A matching of $G_n$ can contain at most two
  edges, and each pair is either of the form $M_j = \{t_2,t_j\}, j = 1, 4, 8, 11,
  \ldots, 3n-1$ or of the form $M_{\ell} = \{t_5,t_{\ell}\}, \ell = 4, 8, 11,
  \ldots, 3n-1,3n+2.$ Note that the incidence vectors of the matchings of type
  $M_j$ and $M_{\ell}$ can be represented as $e_{2,j}$ or $e_{5,\ell}$
  for $\ell, j$ as specified above. Hence, the vertices of the
  matching polytope of $G_n$ are the same as the vertices of $Q_n$ as
  given in (\ref{eq:vertices}), implying the two polytopes coincide. 
\end{proof}

Let $\wt{G}_n$ be the simple graph arising from $G_n$ by deleting the $2n$ parallel edges $t_{3i}, t_{3i+1}, 1\leq i\leq n$ and relabeling the remaining edges. Then $\wt{G}_n$ is the graph on $n+3$ vertices and $n+3$ edges. 

\begin{prop}\label{matchPolyKn}
 The polytope $K_n$ is the matching polytope for $\wt{G}_n,$ i.e., $K_n = P_{MA}(\wt{G}_n).$
\end{prop}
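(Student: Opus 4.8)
The plan is to mirror the proof of Proposition~\ref{matchPolyQn}: identify the vertices of $P_{MA}(\wt G_n)$ by listing all matchings of $\wt G_n$, and check that this vertex set coincides with $\mcV_n$ from~(\ref{eq:verticesProj}). Since $\wt G_n$ is the simple graph on $n+3$ vertices obtained from $G_n$ by deleting the $2n$ parallel $s_1$--$s_3$ edges, it consists of a single $4$-cycle together with $n-1$ pendant edges attached at the vertex $s_1$ (and one of the diagonal edges survives the relabeling, as the $4$-cycle has a chord-like edge left). First I would fix an edge labeling so that $t_1,\dots,t_{d_n}$ with $d_n=n+3$ matches the coordinate order used to define $K_n$, with $t_2$ the edge of the $4$-cycle incident to $s_1$ playing the role of the ``hub'' edge, and $t_4$ the partner cycle-edge, exactly as the indices $v_{12},v_{23},v_{25},\dots,v_{2,d_{n-1}},v_{34},v_{45},\dots,v_{4,d_n}$ in~(\ref{eq:verticesProj}) suggest.

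Next I would enumerate the matchings. The empty matching gives $v_0$; each single edge $t_i$ gives the standard basis vector $v_i$, yielding $v_1,\dots,v_{d_n}$. For size-two matchings, two edges can be simultaneously chosen iff they are vertex-disjoint; in $\wt G_n$ a pendant edge at $s_1$ is disjoint only from the unique cycle-edge not touching $s_1$, and the two $s_1$-incident cycle edges are each disjoint from a suitable pendant or the opposite cycle edge. Bookkeeping the disjointness relation produces exactly the pairs recorded as $v_{12},v_{23},v_{25},\dots,v_{2,d_{n-1}}$ (the hub edge $t_2$ together with each edge it is disjoint from) and $v_{34},v_{45},\dots,v_{4,d_n}$ (the other cycle-edge $t_4$ together with each disjoint edge), and no matching of size $\geq 3$ exists because any three edges of a graph whose non-pendant part is a single $4$-cycle must share a vertex. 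Thus $\operatorname{vert}(P_{MA}(\wt G_n)) = \mcV_n$, so the two polytopes have the same convex hull and hence coincide.

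Alternatively—and this is the cleaner route I would actually present—I would deduce this directly from Proposition~\ref{matchPolyQn} together with the projection already used in Section~\ref{subsec:one-site}. Recall $K_n$ is obtained from $Q_n$ by projecting away the $2n$ coordinates indexed by the parallel edges $t_{3i},t_{3i+1}$; on the combinatorial side, matchings of $G_n$ that use one of these parallel edges project to matchings of $\wt G_n$ using no corresponding edge (since each such parallel edge is incident to $s_1$ or $s_3$ and disjoint from the rest in a controlled way), and in fact the projection sends $\operatorname{vert}(P_{MA}(G_n))$ onto $\operatorname{vert}(P_{MA}(\wt G_n))$: the vertices $e_{2,3i},e_{2,3i+1}$ of $Q_n$ coming from parallel edges collapse onto already-present vertices $v_2$ (or $v_{d_j}$), while the surviving vertices map bijectively to $\mcV_n$. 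Since the projection of $Q_n=P_{MA}(G_n)$ is $K_n$ by construction, and this same projection applied to the matching-polytope description gives $P_{MA}(\wt G_n)$, we get $K_n = P_{MA}(\wt G_n)$.

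The main obstacle is purely bookkeeping: getting the edge labeling of $\wt G_n$ to line up precisely with the coordinate order of $\RR^{d_n}$ fixed before~(\ref{eq:verticesProj}), so that the list of size-two matchings reproduces the index pattern $v_{2,d_{n-1}}$ etc.\ exactly rather than up to relabeling. I expect no conceptual difficulty—$\wt G_n$ has at most size-two matchings because its cyclomatic structure is a single $4$-cycle with pendants—so the write-up can be kept to the length of the proof of Proposition~\ref{matchPolyQn}, simply verifying the vertex sets agree.
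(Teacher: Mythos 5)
Your first route---enumerate the matchings of $\wt{G}_n$ and check that their incidence vectors are exactly the set $\mcV_n$ of (\ref{eq:verticesProj})---is the paper's own proof in substance (the paper phrases it by observing that the matchings of $\wt{G}_n$ are exactly the matchings of $G_n$ that avoid the deleted parallel edges, so the enumeration from Proposition~\ref{matchPolyQn} carries over with $2n$ fewer singletons and no two-edge matchings lost). However, your description of $\wt{G}_n$ and of its disjointness relation contains concrete errors that would derail the enumeration if carried out as written. No diagonal edge survives: all $2n$ parallel $s_1$--$s_3$ edges are deleted, so $\wt{G}_n$ is precisely the $4$-cycle together with $n-1$ pendant edges at $s_1$, with $n+3$ edges in total (a surviving chord would give $n+4$). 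Moreover, each pendant edge at $s_1$ is disjoint from \emph{both} cycle edges not incident to $s_1$ (these, not an $s_1$-incident edge, play the roles of the ``hub'' coordinates $2$ and $4$ in $K_n$), while the two $s_1$-incident cycle edges are disjoint from \emph{no} pendant---each is disjoint only from its opposite cycle edge. It is exactly this relation that yields the $2n$ two-edge matchings $\{t_2,t_1\}$, $\{t_2,t_j\}$, $\{t_4,t_{d_n}\}$, $\{t_4,t_j\}$ (with $t_j$ ranging over the pendants), i.e.\ the vertices $v_{12},v_{23},v_{25},\ldots,v_{2,d_{n-1}}$ and $v_{34},v_{45},\ldots,v_{4,d_n}$; with your stated disjointness each pendant would contribute only one pair and the count would not come out to $2n$. (Your argument that no matching has three edges is fine.)

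Your preferred projection route is genuinely different from the paper and can be made to work, but the collapse you describe is not what happens: $Q_n$ has no two-coordinate vertices ``coming from parallel edges,'' because no matching of $G_n$ contains a parallel edge together with a second edge (every edge of $G_n$ meets $s_1$ or $s_3$). The only vertices of $Q_n$ supported on the projected coordinates are the $2n$ singletons $e_i$, and these project to the origin $v_0$, not to $v_2$ or $v_{d_j}$. With that correction the argument closes: the coordinate projection sends $Q_n=P_{MA}(G_n)$ onto $\conv(\mcV_n)=K_n$, and on the matching side it identifies the image with $P_{MA}(\wt{G}_n)$, since matchings of $\wt{G}_n$ are the matchings of $G_n$ avoiding parallel edges and every parallel-edge matching is a singleton projecting to the empty matching. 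This version reuses Proposition~\ref{matchPolyQn} wholesale instead of re-enumerating, at the cost of the (easy) singleton observation; the paper simply re-runs the enumeration directly on $\wt{G}_n$.
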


\begin{proof}
  Similarly to the proof of Proposition \ref{matchPolyQn}, we will
  show that the vertices of $K_n$ and $P_{MA}(\wt{G}_n)$ are the
  same. Note that the matching for $\wt{G}_n$ will be a subset of the matching of $G_n.$ In particular, there will be $2n$ fewer singleton matchings resulting from the deletion of the $2n$ parallel edges. No two-edge matching will be lost in the construction of $\wt{G}_n$ from $G_n.$ All single edge matchings correspond to the standard vectors $e_i$ for $1\leq i\leq n+3,$ with $e_0$ representing the empty matching. As in the proof of Proposition \ref{matchPolyQn}, we have two-edge matchings of types $M_{j'}$ and $M_{\ell'}$  for $j' = 1, 3, 5, 6, \ldots, n-1$ and $\ell' = 3, 5, 6, \ldots, n$ corresponding to the vertices $v_{2,j'}$ and $v_{4,\ell'}$ from the vertex representation of $K_n$ given in (\ref{eq:verticesProj}). Hence, $K_n = P_{MA}(\wt{G}_n).$
\end{proof}


\section{Acknowledgements}  Elizabeth Gross was supported by NSF
DMS-1620109. Cvetelina Hill was partially supported by NSF DMS-1600569. Additionally, this material is based upon work supported by the National Science Foundation under Grant No. DMS-1439786 while the authors were in residence at the Institute for Computational and Experimental Research in Mathematics in Providence, RI, during the Fall 2018 semester.

\bibliography{refs}
\bibliographystyle{alpha}

\end{document}